\numberwithin{equation}{section}
\numberwithin{figure}{section}
\theoremstyle{plain}
\newtheorem{thm}{\protect\theoremname}[section]
\theoremstyle{plain}
\newtheorem{lem}[thm]{\protect\lemmaname}
\theoremstyle{plain}
\newtheorem{cor}[thm]{\protect\corollaryname}
\theoremstyle{plain}
\newtheorem{prop}[thm]{\protect\propositionname}
\theoremstyle{definition}
\newtheorem{defn}[thm]{\protect\definitionname}
\theoremstyle{remark}
\newtheorem{rem}[thm]{\protect\remarkname}
\theoremstyle{definition}
\global\long\def\R{\mathbb{R}}%
\global\long\def\Q{\mathbb{Q}}%
\global\long\def\Z{\mathbb{Z}}%
\global\long\def\N{\mathbb{N}}%
\global\long\def\M{\mathcal{M}}%
\global\long\def\U{\mathbb{U}}%
\global\long\def\norm#1#2{\left\Vert #1\right\Vert _{#2}}%
\global\long\def\diff{\,d}%
\global\long\def\tendsto#1#2{\underset{#1\to#2}{\longrightarrow}}%
\global\long\def\weakto#1#2{\underset{#1\to#2}{\rightharpoonup}}%
\global\long\def\A{\mathcal{A}}%
\global\long\def\B{\mathcal{B}}%
\global\long\def\Acc#1#2{\underset{#1}{\textrm{Acc}}( #2 )}
\global\long\def\define#1{\textit{#1}}
\global\long\def\Ml{\M_0^l}%
\global\long\def\Ms{\M_0^s}%
\global\long\def\MF{\M_0}%
\global\long\def\Bo{\mathfrak{B}}%
\global\long\def\boldepsilon{\boldsymbol{\varepsilon}}%
\providecommand{\corollaryname}{Corollary}
\providecommand{\definitionname}{Definition}
\providecommand{\examplename}{Example}
\providecommand{\lemmaname}{Lemma}
\providecommand{\propositionname}{Proposition}
\providecommand{\remarkname}{Remark}
\providecommand{\theoremname}{Theorem}
\providecommand{\corollaryname}{Corollary}
\providecommand{\definitionname}{Definition}
\providecommand{\examplename}{Example}
\providecommand{\lemmaname}{Lemma}
\providecommand{\propositionname}{Proposition}
\providecommand{\remarkname}{Remark}
\providecommand{\theoremname}{Theorem}
\begin{document}

\title[Zero-noise limits measures of perturbed cellular automata]{Characterization of the set of zero-noise limits measures of perturbed cellular automata}

\author{Marsan~Hugo}
\address{IMT, Université Toulouse III - Paul Sabatier, Toulouse, France}
\email{hugo.marsan@math.univ-toulouse.fr}

\author{Sablik~Mathieu}
\address{IMT, Université Toulouse III - Paul Sabatier, Toulouse, France}
\email{msablik@math.univ-toulouse.fr}
\urladdr{https://www.math.univ-toulouse.fr/\~{}msablik/index.html}

\begin{abstract}
We add small random perturbations to a cellular automaton and consider the one-parameter family $(F_\epsilon)_{\epsilon>0}$  parameterized by $\epsilon$ where $\epsilon>0$ is the level of noise. The objective of the article is to study the set of limiting invariant distributions as $\epsilon$ tends to zero denoted $\Ml$. Some topological obstructions appear, $\Ml$ is compact and connected, as well as combinatorial obstructions as the set of cellular automata is countable: $\Ml$ is $\Pi_3$-computable in general and $\Pi_2$-computable if it is uniformly approached. Reciprocally, for any set of probability measures $\mathcal{K}$ which is compact, connected and  $\Pi_2$-computable, we construct a cellular automaton whose perturbations by an uniform noise admit $\mathcal{K}$ as the zero-noise limits measure and this set is uniformly approached. To finish, we study how the set of limiting invariant measures can depend on a bias in the noise. We  construct a cellular automaton which realizes any connected compact set (without computable constraints) if the bias is changed for an arbitrary small value. In some sense this cellular automaton is very unstable with respect to the noise.

\end{abstract}

\maketitle
\tableofcontents{}

\section{Introduction}

A cellular automaton is a complex system defined by a local rule which acts synchronously and uniformly on the configuration space $\A^\Z$, where $\A$ is a finite alphabet. It can be also defined as a continuous function on $\A^\Z$ which commutes with the shift. These simple models have a wide variety of different dynamical behaviours. Cellular automata are used to model phenomena defined by local rules or to implement massively parallel computations. The set of invariant probability measures of the system, denoted by $\M_0$, gives information on the statistical properties of the system. 

Since physical models have some uncertainty, it is natural to perturb a dynamical system at each iteration. From the computer science point of view, these perturbations allow to explore how the model is robust at some noise. A natural way to define a perturbation is to consider that after each iteration of a given cellular automaton, we randomly modify each cell independently with a probability $\epsilon$ where the new letter is chosen uniformly: we call this the uniform perturbation. The set of stationary probability measures of this randomly perturbed dynamics is denoted by $\M_\epsilon$. For many classes of cellular automata, $\M_\epsilon$ is a singleton~\cite{MST19}. It was historically more difficult to find cellular automata such that $\M_\epsilon$ is not a singleton~\cite{Toom80} and the question was very challenging for one dimensional cellular automata~\cite{Gacs01}.

In this article, we consider the set of zero-noise limits of invariant measures of the perturbed dynamical systems as $\epsilon$ tends to zero, denoted by $\Ml$. Its corresponds to the set of accumulation points of the sets $\M_\epsilon$ when $\epsilon$ goes to $0$. $\Ml$ is actually a subset of $\M_0$ and can be seen as a way to select physically relevant invariant measures of the system. Informally it is an invariant measure which is supposed to be observable even if there is some noise. 

Usually, one can look for conditions on the dynamical system to force $\Ml$ to be a singleton: the dynamical system is then said to be strongly stochastically stable. In the context of smooth dynamical systems. L.S. Young proves the stochastic stability of $\mathcal{C}^2$ hyperbolic diffeomorphism which comes from the persistence of hyperbolic attractors ~\cite{Young-86}. This problematic was explored for different class of dynamical systems~\cite{Alves-Arau-Vasquez-2007,AlvesVilarinho-2013} sometimes using specific methods as spectral properties of transfer operators~\cite{Viana_97}. See~\cite{Young02} for an overview. In the context of cellular automata the authors have proven the strong stochastic stability for some classes of cellular automata and the undecidability of this property~\cite{Marsan-Sablik-2023}.

The objective of this article is to characterize which set of invariant measures can be obtained as $\Ml$ for a cellular automaton perturbed by a uniform noise. In particular we exhibit a wide variety of sets which can be reached as $\Ml$. The same question is also relevant for other classes of dynamical systems. 

Note that for large classes of cellular automata $\M_\epsilon$ is a singleton~\cite{MST19}: in this case if $\Ml$ is not a singleton, the stationary probability measure of the randomly perturbed dynamics associated can oscillate between different measures when $\epsilon$ goes to $0$. We can generalize this phenomenon: $\Ml$ is said to be uniformly approached if for any family $(\pi_\epsilon)_{\epsilon>0}$ of measures such that $\pi_\epsilon\in\M_\epsilon$ the adherence values of the family is the whole $\Ml$. In other words, the description of $\Ml$ does not depend of the family of invariant measures chosen. If $\Ml$ contains at least two elements and is uniformly approached, the system is chaotic in the sense that there is no stable measure. A stable measure $\mu$ is an invariant measure such that there exists a family of measures $(\pi_\epsilon)_{\epsilon>0}$ such that $\pi_\epsilon\in\M_\epsilon$ and $(\pi_\epsilon)_{\epsilon>0}$ converges toward $\mu$ when $\epsilon$ goes to $0$. In this article we construct sets of zero-noise limits  measures which are uniformly approached and so the cellular automata constructed are chaotic when $\Ml$ is not a singleton. This notion of chaocity can be compared to the chaocity studied in~\cite{ChaHo10} in the context of Gibbs measures defined by local range: when the temperature goes to $0$ the Gibbs measures oscillate between two states of the ground states (measures obtained as adherence values of Gibbs measures when the temperature goes to $0$). In~\cite{GST23} the notion of ground states uniformly approached is introduced in view to characterize them geometrically. 

To characterize sets realized as some $\Ml$, natural topological obstructions appear and are exhibited in Section~\ref{section.TopoContraint}. The set $\Ml$ is closed (so compact) as adherence values in the set of probability measures and is also connected. As the class of cellular automata is countable, there is also combinatorial obstructions which must appear to describe the possible zero-noise limits of invariant measures sets. Recently, a lot of dynamical invariants of cellular automata or subshift of finite type were characterized using computability tools (possible entropies~\cite{Hochman-Meyerovitch-2010}, possible growth-type invariants~\cite{Meyerovitch-2011}, possible sub-actions~\cite{Hochman-2009,Aubrun-Sablik-2010}, possible limit measures~\cite{HS18}...). Thus it is natural to search such obstructions in this context. In Section~\ref{section.ComputContraint} we introduce some notion of computability. A way to describe the algorithmic complexity of a compact set is to consider the next decision problem: given a ball in input, we ask if it intersects the compact set. As we consider asymptotic behavior, this decision problem is undecidable so we need to use the arithmetical hierarchy~\cite{Rogers-1987} to precise the level of uncomputability of this decision problem. In this setting, $\Ml$ is $\Pi_3$-computable and if $\Ml$ is uniformly approached it is $\Pi_2$-computable. 

A difficult question is to determine if these obstructions are the only ones. In Section~\ref{Section.TheoremeRealisation} we prove the main result of this article (Theorem~\ref{thm:realization}) which says that given $\mathcal{K}$ a $\Pi_2$-computable connected and compact subset of shift-invariant probability measures on $\A^\Z$, there exists a cellular automaton on $\B^\Z$ where $\A\subset\B$ such that the associated set $\Ml$ is $\mathcal{K}$ and $\Ml$ is uniformly approached. Note that the cellular automata cannot be surjective since in~\cite{MST19} it is proven that in this case $\M_\epsilon$ is reduced to the uniform Bernoulli measure and therefore so is $\Ml$. Another remark is that $\Ml$ is uniformly approached, which means that the stationary probability measures of the randomly perturbed dynamics associated $\M_{\epsilon}$ oscillate when $\epsilon$ goes to $0$.

The proof of Theorem~\ref{thm:realization} adapts the construction proposed in~\cite{HS18} which realizes the set of measures reached asymptotically when an initial measure is  iterated by a deterministic cellular automaton. The main idea is that there is a special symbol which appears only in the initial configuration or after a perturbation by the noise which initializes a computation realizing approximations of the target set. If two distinct zones of the approximation are in competition, only the youngest remains. The difficulty in our context is to understand how the computation implemented in a such construction can resist at the noise or at least why the noise does not affect so much the computation and the approximation of the target set. As the special symbol only appear with the noise, a $\epsilon$ closer to $0$ means that a zone of approximation has more time to compute the target set before being deleted by a younger computation zone, and thus can give a better approximation. 

In the obstructions, the $\Pi_2$-computability comes from the computability of both the dynamical system (the cellular automaton) and the perturbation itself (the uniform noise). It is possible to put some information in the noise introducing some bias which can be uncomputable. In Section~\ref{section.DependenceNoise}, we slightly modify the construction of Theorem~\ref{thm:realization} in order to take as an oracle input the bias of the noise for the computations. This allows to remove the $\Pi_2$-computable assumption in Theorem~\ref{thm:realization}. In Corollary~\ref{Cor.Strongly Unstable} we exhibit a strongly unstable cellular automata in the following sense: for any $\mathcal{K}$ a connected compact set of shift invariant measures (not necessary $\Pi_2$-computable) and any bias, there exists a bias arbitrary near of the first one such that the set $\Ml$ associated is $\mathcal{K}$ and $\Ml$ is uniformly approached.  

We finish this introduction with an open question. In all our constructions the set $\Ml$ is uniformly approached. When $\Ml$ contains a stable measure and it is not reduced to a singleton (in this case $\Ml$ is not uniformly approached), we don't know which set of measures can be reached : this situation is actually more difficult to produce since this implies that $\M_{\epsilon}$ contains at least two probability measures for $\epsilon$ small enough, and the only known example of one-dimensional cellular automaton with this behavior is quite complex~\cite{Gacs01}.

\section{Notion of stability for cellular automata}

\subsection{Deterministic and probabilistic cellular automata}

Let $\A$ be a finite alphabet of symbols, and consider $\A^{\Z}$ the space of configurations of $\Z$ endowed with the product topology. An application $F:\A^\Z\to\A^\Z$ is a \define{cellular automaton} (CA) if there is a finite neighborhood $\mathcal{N}=\left\{ i_{1},...,i_{r}\right\} \subset\Z$ and a local rule $f:\A^{\mathcal{N}}\to\A$ such that for all $i\in\Z$, $\left(Fx\right)_{i}=f(x_{i+\mathcal{N}})$ where $x_{i+\mathcal{N}}=(x_{i+i_{1}},x_{i+i_{2}},...,x_{i+i_{r}})$. Equivalently, a cellular automaton can be defined as a continuous map on $\A^\Z$ which commutes with the shift map $\sigma:\A^\Z\to\A^\Z$ defined by $\sigma(x)_i=x_{i+1}$ for $x\in\A^\Z$ and $i\in\Z$. 

For a fixed finite subset $\U\subset\Z$, a \define{pattern} $u$ is an element of $\A^\U$. Define the cylinder centered on $u$ by $[u]_\U=\{x\in\A^\U:x_\U=u\}$ (just $[u]$ if there is no ambiguity). The set of cylinders is a basis for the product topology and the $\sigma$-algebra engendered by them is the set of Borelian denoted by $\Bo$.

For a \define{probabilistic cellular automaton} (PCA) the local rule is  randomized and independently applied at every site. More specifically, the local rule of a PCA is a stochastic matrix  $\varphi:\A^{\mathcal{N}}\times\A\to[0,1]$ such that $\sum_{b\in\A}\varphi(u,b)=1$ for each $u\in\A^\mathcal{N}$.  Starting from a configuration $x$, the symbol at each site $i$ is updated at random according to the distribution $\varphi(x_{i+\mathcal{N}},\cdot)$ independently of the other sites. This is described by a transition kernel $\Phi$ where
\[
\forall x\in\A^\Z,\forall\U\subset\Z,\forall w\in\A^\U,\quad \Phi(x,[w]_{\U})=\prod_{i\in\U}\varphi(x_{i+\mathcal{N}},w_{i}).
\]
Moreover, $\Phi$ is an \define{$\epsilon$-perturbation} of a CA $F$ if they are defined on the same alphabet, have the same neighborhood, and if their local rules $\varphi$ and $f$ satisfy 
\[
\forall u\in\A^\mathcal{N},\quad\varphi\left(u,f(u)\right)\geq1-\epsilon.
\]

A family of transition kernel $(F_\epsilon)_{\epsilon>0}$ is called a \define{perturbation of a cellular automaton $F$}  if for all $\epsilon>0$, $F_\epsilon$ is  an $\epsilon$-perturbation of $F$. The stochastic matrix associated is then denoted by $f_{\epsilon}$. We can also consider $F$ as a PCA with local rule $f_0$, where $f_0$ is defined for $u\in\A^{\mathcal{N}}$ and $a\in\A$ by $f_0(u,a)=1$ if $f(u)=a$ and $f_0(u,a)=0$ otherwise. The definition of perturbation is then equivalent (after renaming) to the convergence of $f_{\epsilon}$ to the matrix $f_0$ when $\epsilon$ goes to $0$.

\begin{defn}
A usual perturbation is the perturbation by a \define{uniform noise}: after applying the cellular automaton, each cell is modified independently with probability $\epsilon$ by a symbol chosen uniformly in $\A$. In other words, for $u\in\A^{\mathcal{N}}$ and $a\in\A$ one has 
\[
f_{\epsilon}(u,a)=
\begin{cases}
 \frac{\epsilon}{|\A|}&\textrm{ if }f(u)\ne a\\
1-\epsilon+\frac{\epsilon}{|\A|}&\textrm{ if }f(u)=a\\
\end{cases}.
\]
\end{defn}

\subsection{Action on the set of shift-invariant probability measures}

In this article we consider Borel probability measures which are invariant by the shift map, that is to say measures $\mu$ which satisfy $\mu(\sigma^{-1}(A))=\mu(A)$ for any observable $A\in\Bo$. Denote by $\M(\A^\Z)$ the \define{set of shift-invariant probability measures}.  This set is compact, convex and metrizable for the weak convergence topology: $$\mu_{n}\weakto n{\infty}\mu \textrm{
if }\mu_{n}([u]_{\U})\tendsto n{\infty}\mu([u]_{\U})\textrm{ for all cylinders }[u]_{\U}.$$

We endow $\M\left(\A^\Z\right)$ with the distance $d_{\M}$ which generates the weak convergence topology: 
\[
d_{\M}\left(\mu,\nu\right)\coloneqq\sum_{n\in\N}\frac{1}{2^{n}}\norm{\mu-\nu}{\left\llbracket -n,n\right\rrbracket} ,\qquad \textrm{ where }\qquad \norm{\mu-\nu}{\U}\coloneqq\frac{1}{2}\sum_{u\in\A^{\U}}\left|\mu([u])-\nu([u])\right|
\]
is the total variation distance on the finite set $\U\subset\Z$.

A PCA  $\Phi$  acts on  the set of Borel probability measures on $\A^\Z$ by 
\[
\Phi\mu(A) = \int\Phi(x,A)\diff\mu(x) \textrm{ for all }A\in\Bo,
\]
which in the case of a deterministic CA becomes $F\mu(A) = \mu\left(F^{-1}(A)\right)$. A  probability measure is \define{$\Phi$-invariant} if $\Phi\mu=\mu$. Traditionally the set of $\Phi$-invariant probability measure is denoted by $\M_{\Phi}(\A^\Z)$ (recall that in this article the measures considered are also shift-invariant). To simplify the notation, for $(F_{\epsilon})_{\epsilon>0}$ a perturbation of a CA $F$, the set $\M_{F_{\epsilon}}(\A^\Z)$ is just denoted by $\M_{\epsilon}$ and $\M_0$ denotes the set of $F$-invariant measures.

\begin{prop}
Let $\Phi$ be a PCA of radius $r$, that is to say $\mathcal{N}\subset\left\llbracket -r,r\right\rrbracket$. The action of $\Phi$ is $2^{r}$-Lipschitz on $\M(\A^\Z)$ for the distance $d_{\M}$.
\end{prop}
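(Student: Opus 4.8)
The plan is to prove a contraction estimate on each finite window and then to assemble these estimates through the geometric weights defining $d_{\M}$. First I would fix $n\in\N$, write $V=\llbracket -n,n\rrbracket$ and $W=\llbracket -n-r,n+r\rrbracket$, and exploit the locality of $\Phi$: for a pattern $w\in\A^V$, the quantity $\Phi(x,[w]_V)=\prod_{i\in V}\varphi(x_{i+\mathcal N},w_i)$ depends only on the restriction $x_W$, because $i+\mathcal N\subset\llbracket -n-r,n+r\rrbracket$ for every $i\in V$. Hence there is a function $g_w:\A^W\to[0,1]$ with $\Phi\mu([w]_V)=\sum_{v\in\A^W}g_w(v)\,\mu([v]_W)$, and likewise for $\nu$. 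I would also note at the outset that $\Phi$ maps $\M(\A^\Z)$ into itself, since the kernel commutes with the shift, so the statement is well posed.

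The key step is then a short but crucial computation. Subtracting the two expressions, applying the triangle inequality and exchanging the order of summation gives
\[
\sum_{w\in\A^V}\bigl|\Phi\mu([w]_V)-\Phi\nu([w]_V)\bigr|\le\sum_{v\in\A^W}\bigl|\mu([v]_W)-\nu([v]_W)\bigr|\Bigl(\sum_{w\in\A^V}g_w(v)\Bigr).
\]
Here I would use that the rows of the stochastic matrix $\varphi$ sum to $1$: for any fixed $v$, one has $\sum_{w\in\A^V}g_w(v)=\prod_{i\in V}\sum_{b\in\A}\varphi(v_{i+\mathcal N},b)=1$. After cancelling the factors $\tfrac12$ in the definition of $\norm{\cdot}{\U}$, this yields the per-window contraction $\norm{\Phi\mu-\Phi\nu}{V}\le\norm{\mu-\nu}{W}$, that is $\norm{\Phi\mu-\Phi\nu}{\llbracket -n,n\rrbracket}\le\norm{\mu-\nu}{\llbracket -n-r,n+r\rrbracket}$.

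Finally I would insert this into the series defining $d_{\M}$ and reindex with $m=n+r$:
\[
d_{\M}(\Phi\mu,\Phi\nu)=\sum_{n\in\N}\frac{1}{2^n}\norm{\Phi\mu-\Phi\nu}{\llbracket -n,n\rrbracket}\le\sum_{n\in\N}\frac{1}{2^n}\norm{\mu-\nu}{\llbracket -n-r,n+r\rrbracket}=2^r\sum_{m\ge r}\frac{1}{2^m}\norm{\mu-\nu}{\llbracket -m,m\rrbracket}.
\]
Since all terms are nonnegative, the tail $\sum_{m\ge r}$ is bounded by the full sum $\sum_{m\ge 0}$, so the right-hand side is at most $2^r d_{\M}(\mu,\nu)$, which is exactly the claimed Lipschitz bound.

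I expect the only genuinely delicate point to be this locality/stochasticity step: recognizing that $\Phi(x,[w]_V)$ factors through a function on $\A^W$, and that summing the resulting weights over all output patterns $w$ gives precisely $1$ because each row of $\varphi$ is a probability vector. Everything else—the triangle inequality, the interchange of summations, and the geometric reindexing that produces the factor $2^r$—is routine bookkeeping.
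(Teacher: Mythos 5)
Your proof is correct and follows essentially the same route as the paper's: the same locality observation (your $g_w(v)$ is the paper's $\Phi([w],[u])$), the same use of stochasticity to show the weights sum to $1$, and the same geometric reindexing to extract the factor $2^r$. The only cosmetic differences are your explicit factorization $\sum_w g_w(v)=\prod_{i\in V}\sum_b\varphi(v_{i+\mathcal N},b)=1$, which the paper compresses into the remark ``probability kernel,'' and your well-posedness aside about shift-invariance.
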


\begin{proof}
To simplify notations, let $I_{n}$ denote the set $\left\llbracket -n,n\right\rrbracket$. Let us first show that $\norm{\Phi\mu-\Phi\nu}{I_{n}}\leq\norm{\mu-\nu}{I_{n+r}}$. Since $\mathcal{N}=\left\{ j_{1},...,j_{k}\right\} \subset I_{r}$, for any $u\in\A^{I_{n}}$, $\Phi\left(x,\left[u\right]\right)=\prod_{i\in I_{n}}\phi\left(x_{i+j_{1}},...,x_{i+j_{k}}\right)\left(u_{i}\right)$ has the same value for all $x\in\left[w\right]$, where $w\in\A^{I_{n+r}}$, let us denote by $\Phi\left(\left[w\right],\left[u\right]\right)$ this value. Using the definition, for any $u\in\A^{I_{n}}$, we obtain

\[
\Phi\mu([u])=\int \Phi\left(x,\left[u\right]\right)\diff\mu(x)=\sum_{w\in\A^{I_{n+r}}}\Phi\left(\left[w\right],\left[u\right]\right)\mu\left(\left[w\right]\right).
\]

We can finally compute
\begin{align*}
\norm{\Phi\mu-\Phi\nu}{I_{n}} & =\frac{1}{2}\sum_{u\in\A^{I_{n}}}\left|\Phi\mu([u])-\Phi\nu([u])\right|\\
 & =\frac{1}{2}\sum_{u\in\A^{I_{n}}}\left|\sum_{w\in\A^{I_{n+r}}}\Phi\left(\left[w\right],\left[u\right]\right)\left(\mu\left(\left[w\right]\right)-\nu\left(\left[w\right]\right)\right)\right|\\
 & \leq\frac{1}{2}\sum_{u\in\A^{I_{n}}}\sum_{w\in\A^{I_{n+r}}}\Phi\left(\left[w\right],\left[u\right]\right)\left|\mu\left(\left[w\right]\right)-\nu\left(\left[w\right]\right)\right|\\
 & =\frac{1}{2}\sum_{w\in\A^{I_{n+r}}}\left|\mu\left(\left[w\right]\right)-\nu\left(\left[w\right]\right)\right|\underset{=1\text{ (probability kernel)}}{\underbrace{\sum_{u\in\A^{I_{n}}}\Phi\left(\left[w\right],\left[u\right]\right)}}\\
 & =\norm{\mu-\nu}{I_{n+r}}.
\end{align*}

To conclude, we have
\[
d_{\M}\left(\Phi\mu,\Phi\nu\right)=\sum_{n\in\N}\frac{1}{2^{n}}\norm{\Phi\mu-\Phi\nu}{I_{n}} \leq\sum_{n\in\N}\frac{1}{2^{n}}\norm{\mu-\nu}{I_{n+r}}
  =2^{r}\sum_{n\in\N}\frac{1}{2^{n+r}}\norm{\mu-\nu}{I_{n+r}} \leq2^{r}d_{\M}\left(\mu,\nu\right).
\]

\end{proof}

\subsection{The zero-noise limits invariant measures}

Given the common occurrence of accumulation points in this article, we will use the following notations: for $(M_\epsilon)_{\epsilon\in D}$ a family of closed sets of $\M(\A^\Z)$ indexed by elements of $D\subset\R_+$ admitting $0$ as an accumulation point, define the set of accumulation points by
\[
\Acc{\epsilon\to 0}{(M_\epsilon)_{\epsilon\in D}}\coloneqq\left\{ \mu \mid \exists\delta>0,\ \forall\epsilon_0>0,\ \exists\epsilon\in D\cap]0,\epsilon_0[,\ \exists \nu\in M_\epsilon \textrm{ such that } d_\M(\mu,\nu)<\delta \right\} 
\]

In other word, $\mu\in \Acc{\epsilon\to 0}{(M_\epsilon)_{\epsilon\in D}}$ if there exists a sequence $(\epsilon_{n})_{n\in\N}$ of elements of $D$ which converges to $0$ such that for all $n$ there exists $\mu_{\epsilon_{n}}\in M_{\epsilon_{n}}$ which satisfy $\mu_{\epsilon_{n}}\weakto n{\infty}\mu$. In general $D=\R_+^\ast$ and we just denote this set by $\Acc{\epsilon\to 0}{M_\epsilon}$. If $M_\epsilon$ is a singleton $\{\pi_{\epsilon}\}$ for all $\epsilon>0$, we just denote it by $\Acc{\epsilon\to 0}{\pi_\epsilon}$.

\begin{defn}
Given a perturbation of a cellular automaton $\left(F_{\epsilon}\right)_{\epsilon>0}$ and $\M_{\epsilon}$ the set of $(F_{\epsilon},\sigma)$-invariant measures,  $\Ml$ is the \define{zero-noise limits of invariant measures} (in short the \define{zero-noise limits measures}) defined by $\Ml=\underset{\epsilon\to0}{\textrm{Acc}}(\M_{\epsilon})$.
\end{defn}

\begin{rem}
The set $\Ml$ depends on the cellular automaton considered and on the perturbation: we only precise them in arguments in ambiguous situations.
\end{rem}

\subsection{Stable measures and chaocity}

A zero-noise limit measure $\nu$ is not necessary stable, in the sense that there could be a $\delta > 0$ and a sequence $(\epsilon_n)_{n\in\N}$ such that the sets $\M_{\epsilon_n}$ do not intersect the ball of center $\nu$ and radius $\delta$.

\begin{defn}
A measure $\nu\in \M$ is \define{stable} if for all positive $\epsilon$ there exists $\pi_{\epsilon}\in\M_{\epsilon}$ such that the family $(\pi_\epsilon)_{\epsilon>0}$ satisfies $\pi_{\epsilon}\weakto{\epsilon}{0}\nu$. The set of stable measures is denoted by $\Ms$.

A perturbation of a cellular automaton $\left(F_{\epsilon}\right)_{\epsilon>0}$ is said to be \define{chaotic} if it does not admit any stable measures. 
\end{defn}

\begin{rem}
A stable measure is a zero-noise limit measure, so $\Ms\subset\Ml$.
\end{rem}

A system is then chaotic if there are at least two zero-noise limits measures such that when $\epsilon$ goes to $0$ the invariant measures of the perturbed system oscillate between these two measures. We want a stronger definition in the sense that the convergence towards $\Ml$ does not depend to the choice of the family of invariant measures.

\begin{defn}
Given a perturbation of a cellular automaton $\left(F_{\epsilon}\right)_{\epsilon>0}$, the set of zero-noise limits of invariant measures $\Ml$ is \define{uniformly approached} if $\Ml=\Acc{\epsilon\to 0}{\pi_{\epsilon}}$ for any choice of the family $\left(\pi_{\epsilon}\right)_{\epsilon>0}$ such that $\pi_{\epsilon}\in\M_{\epsilon}$ for all $\epsilon>0$.
\end{defn}

$\Ml$ can be uniformly approached and still contain at least two measures: in this case $\Ms$ is empty and $\left(F_{\epsilon}\right)_{\epsilon>0}$ is said to be \define{uniformly chaotic}.

\section{Topological constraints}\label{section.TopoContraint}

By definition $\Ml$ and $\Ms$ are closed and thus compact since $\M(\A^\Z)$ is compact. To consider some other topological properties of these sets some regularities on the perturbation of a cellular automaton are required. Let us introduce the notion of continuous perturbation.   

\begin{defn}
A perturbation of a cellular automaton $(F_{\epsilon})_{\epsilon >0}$ is \define{continuous} if the function $\epsilon\longmapsto f_\epsilon$ is continuous for $\epsilon>0$ where $f_\epsilon$ is the local rule (stochastic matrix) of $F_\epsilon$.
\end{defn}

\subsection{Continuity Lemma}

\begin{lem}\label{Lemma-ContinuityLemma}
If $\epsilon\longmapsto f_{\epsilon}$ is continuous at $\epsilon_{0}\geq 0$ and $\pi_{\epsilon}\weakto{\epsilon}{\epsilon_{0}}\pi_{\epsilon_{0}}$ where $\pi_{\epsilon}\in\M_{\epsilon}$ for each $\epsilon\geq 0$, then $F_{\epsilon}\pi_{\epsilon}\weakto{\epsilon}{\epsilon_{0}}F_{\epsilon_0}\pi_{\epsilon_{0}}$. In particular $\Acc{\epsilon\to\epsilon_0}{\M_{\epsilon}}\subset\M_{\epsilon_0}$.
\end{lem}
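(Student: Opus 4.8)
The plan is to reduce the claimed weak convergence to convergence on each cylinder, which is exactly how the weak topology is described in this paper. Fix a cylinder $[u]_{\U}$ and choose $n$ with $\U\subset\llbracket -n,n\rrbracket$; since the cylinders on the windows $\llbracket -n,n\rrbracket$ determine $d_{\M}$, I may take $u\in\A^{\llbracket -n,n\rrbracket}$. Reusing the finite-sum representation established in the proof of the Lipschitz Proposition, for every $\epsilon$ I would write
\[
F_{\epsilon}\pi_{\epsilon}([u])=\sum_{w\in\A^{\llbracket -n-r,n+r\rrbracket}}F_{\epsilon}([w],[u])\,\pi_{\epsilon}([w]),
\]
where $r$ is the radius and $F_{\epsilon}([w],[u])$ denotes the common value of $F_{\epsilon}(x,[u])$ for $x\in[w]$. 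The key structural fact is that this sum is finite and that each coefficient $F_{\epsilon}([w],[u])=\prod_{i\in\llbracket -n,n\rrbracket}f_{\epsilon}(x_{i+\mathcal{N}},u_{i})$ is a finite product of entries of the stochastic matrix $f_{\epsilon}$.

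With this representation in hand, I would take $\epsilon\to\epsilon_{0}$ inside the finite sum. The continuity of $\epsilon\mapsto f_{\epsilon}$ at $\epsilon_{0}$ makes each entry of $f_{\epsilon}$ converge to the corresponding entry of $f_{\epsilon_{0}}$, so each coefficient $F_{\epsilon}([w],[u])\to F_{\epsilon_{0}}([w],[u])$ as a finite product of convergent terms; simultaneously $\pi_{\epsilon}([w])\to\pi_{\epsilon_{0}}([w])$ by the assumed convergence $\pi_{\epsilon}\weakto{\epsilon}{\epsilon_{0}}\pi_{\epsilon_{0}}$. Since all quantities are bounded by $1$ and the index set is finite, the sum of products converges to the sum of products of the limits, i.e. $F_{\epsilon}\pi_{\epsilon}([u])\to F_{\epsilon_{0}}\pi_{\epsilon_{0}}([u])$. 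As $[u]_{\U}$ was arbitrary, this yields $F_{\epsilon}\pi_{\epsilon}\weakto{\epsilon}{\epsilon_{0}}F_{\epsilon_{0}}\pi_{\epsilon_{0}}$. One can alternatively split $d_{\M}(F_{\epsilon}\pi_{\epsilon},F_{\epsilon_{0}}\pi_{\epsilon_{0}})$ by the triangle inequality into $d_{\M}(F_{\epsilon}\pi_{\epsilon},F_{\epsilon}\pi_{\epsilon_{0}})$, controlled by the uniform $2^{r}$-Lipschitz bound of the Proposition (all $F_{\epsilon}$ share the neighborhood $\mathcal{N}$) together with $\pi_{\epsilon}\to\pi_{\epsilon_{0}}$, and $d_{\M}(F_{\epsilon}\pi_{\epsilon_{0}},F_{\epsilon_{0}}\pi_{\epsilon_{0}})$, controlled cylinder-wise as above; but the direct computation handles both limits at once.

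For the inclusion, I would let $\mu\in\Acc{\epsilon\to\epsilon_0}{\M_{\epsilon}}$ and use the metrizability of $\M(\A^{\Z})$ to pick a sequence $\epsilon_{k}\to\epsilon_{0}$ and measures $\pi_{\epsilon_{k}}\in\M_{\epsilon_{k}}$ with $\pi_{\epsilon_{k}}\weakto{k}{\infty}\mu$. Setting $\pi_{\epsilon_{0}}\coloneqq\mu$ and applying the first part along this sequence (the cylinder argument is unchanged for a sequence tending to $\epsilon_{0}$) gives $F_{\epsilon_{k}}\pi_{\epsilon_{k}}\weakto{k}{\infty}F_{\epsilon_{0}}\mu$. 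But each $\pi_{\epsilon_{k}}$ is $F_{\epsilon_{k}}$-invariant, so $F_{\epsilon_{k}}\pi_{\epsilon_{k}}=\pi_{\epsilon_{k}}\weakto{k}{\infty}\mu$; by uniqueness of the weak limit, $F_{\epsilon_{0}}\mu=\mu$, that is $\mu\in\M_{\epsilon_{0}}$.

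The only point requiring care is the simultaneous passage to the limit in both the measure $\pi_{\epsilon}$ and the rule $f_{\epsilon}$. This is precisely why I first fix the observation window: on a bounded window there are only finitely many patterns $w$, each coefficient is a finite product of matrix entries, and probabilities are uniformly bounded, so no uniformity in $n$ is needed and the joint limit reduces to an elementary statement about finite sums of products of convergent bounded sequences. Passing from this fixed-window (cylinder) convergence to convergence for $d_{\M}$ needs nothing beyond the definition of the weak topology adopted here.
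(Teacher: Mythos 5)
Your proof is correct, and it takes a genuinely different route from the paper's. The paper fixes a finite window $\U$ and splits $\norm{F_{\epsilon}\pi_{\epsilon}-F_{\epsilon_{0}}\pi_{\epsilon_{0}}}{\U}$ by the triangle inequality into $\norm{F_{\epsilon}\pi_{\epsilon}-F_{\epsilon_{0}}\pi_{\epsilon}}{\U}+\norm{F_{\epsilon_{0}}\pi_{\epsilon}-F_{\epsilon_{0}}\pi_{\epsilon_{0}}}{\U}$: the second term is handled by the Lipschitz continuity of the action of $F_{\epsilon_{0}}$ (the preceding Proposition), and the first by the pointwise estimate $\left|F_{\epsilon}(x,[u])-F_{\epsilon_{0}}(x,[u])\right|\leq\left(\Delta_{\epsilon,\epsilon_{0}}+1\right)^{\left|\U\right|}-1$ obtained by expanding the product of local rules --- the crucial feature being that this bound is uniform in $x$, hence survives integration against the \emph{moving} measure $\pi_{\epsilon}$. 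You avoid any splitting: writing $F_{\epsilon}\pi_{\epsilon}([u])$ as a finite sum $\sum_{w}F_{\epsilon}([w],[u])\,\pi_{\epsilon}([w])$ over patterns $w$ on the window enlarged by the radius, you pass to the joint limit term by term, since each coefficient is a finite product of entries of $f_{\epsilon}$ (convergent by hypothesis) and each $\pi_{\epsilon}([w])$ converges by weak convergence; finiteness of the sum then does all the work. This is more elementary --- it needs neither the Lipschitz proposition nor any quantitative expansion bound --- at the cost of being purely qualitative: the paper's route produces an explicit measure-independent modulus of continuity, which is exactly the kind of estimate reused later for the computability results (compare Lemma~\ref{lemma.calculNoise}), whereas your argument yields no such modulus, though none is needed for this lemma. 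A further point in your favor: you prove the ``in particular'' inclusion explicitly, via $F_{\epsilon_{k}}\pi_{\epsilon_{k}}=\pi_{\epsilon_{k}}$ and uniqueness of weak limits, correctly observing that the convergence statement never uses invariance of the limit measure, so setting $\pi_{\epsilon_{0}}\coloneqq\mu$ is not circular; the paper leaves this step implicit.
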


\begin{proof}
Let $\U\subset\Z$ be a finite set. Let us prove that  $\norm{F_{\epsilon}\pi_{\epsilon}-F_{\epsilon_{0}}\pi_{\epsilon_{0}}}{\U}\tendsto{\epsilon}{\epsilon_{0}}0$.

Decompose $\norm{F_{\epsilon}\pi_{\epsilon}-F_{\epsilon_{0}}\pi_{\epsilon_{0}}}{\U}\leq\underset{2}{\underbrace{\norm{F_{\epsilon}\pi_{\epsilon}-F_{\epsilon_{0}}\pi_{\epsilon}}{\U}}}+\underset{1}{\underbrace{\norm{Fi_{\epsilon_{0}}\pi_{\epsilon}-F_{\epsilon_{0}}\pi_{\epsilon_{0}}}{\U}}}$.
\begin{enumerate}
\item Converges to $0$ by continuity of the action of $\Phi_{\epsilon_{0}}$
on $\M(\A^\Z)$.
\item By definition, it suffices to show that for all $u\in\A^{\U}$, $F_{\epsilon}\pi_{\epsilon}([u]_{A})-F_{\epsilon_{0}}\pi_{\epsilon}([u]_{A})\tendsto{\epsilon}{\epsilon_{0}}0$. One has
\[
\left|F_{\epsilon}\pi_{\epsilon}([u]_{\U})-F_{\epsilon_{0}}\pi_{\epsilon}([u]_{\U})\right| \leq\int\left|F_{\epsilon}(x,[u]_{\U})-F_{\epsilon_{0}}(x,[u]_{\U})\right|\diff\pi_{\epsilon}.
\]
Then observe that
\begin{align*}
F_{\epsilon}(x,[u]_{\U}) & \coloneqq\prod_{i\in \U}f_{\epsilon}\left(x_{i+\mathcal{N}},u_{i}\right)\\
	& \leq \prod_{i\in\U}\left(f_{\epsilon_{0}}\left(x_{i+\mathcal{N}},u_{i}\right) + \Delta_{\epsilon,\epsilon_{0}}\right) & \textrm{where }\Delta_{\epsilon,\epsilon_{0}} = \norm{f_{\epsilon}-f_{\epsilon_0}}{\U}\\
	& \leq \left(\prod_{i\in \U}f_{\epsilon_{0}}\left(x_{i+\mathcal{N}},u_{i}\right)\right) + \sum_{k=1}^{\left|A\right|}\binom{\left|A\right|}{k}\Delta_{\epsilon,\epsilon_{0}}^{k} & \text{by expanding and }f_{\epsilon_{0}}\in\left[0,1\right]\\
	& = F_{\epsilon_{0}}(x,[u]_{A})+\left(\Delta_{\epsilon,\epsilon_{0}}+1\right)^{\left|A\right|}-1
\end{align*}
and so by symmetry of the computations, 
\[
\left|F_{\epsilon}(x,[u]_{A})-F_{\epsilon_{0}}(x,[u]_{A})\right|\leq\left(\Delta_{\epsilon,\epsilon_{0}}+1\right)^{\left|A\right|}-1.
\]
Since $\epsilon\longmapsto f_{\epsilon}$ is continuous at $\epsilon_{0}\geq 0$, $\Delta_{\epsilon,\epsilon_{0}}\tendsto{\epsilon}{\epsilon_{0}}0$ and finally, 
\[
\left|F_{\epsilon}\pi_{\epsilon}([u]_{A})-F_{\epsilon_{0}}\pi_{\epsilon}([u]_{A})\right|\leq\left(\Delta_{\epsilon,\epsilon_{0}}+1\right)^{\left|A\right|}-1\tendsto{\epsilon}{\epsilon_{0}}0.
\]
Hence the result.
\end{enumerate}
\end{proof}

As by definition a perturbation is always continuous at $\epsilon_0=0$, we obtain the following corollary.
\begin{cor}
 Given a perturbation of a cellular automaton $\left(F_{\epsilon}\right)_{\epsilon>0}$, we have $\Ms\subset\Ml\subset\MF$.
\end{cor}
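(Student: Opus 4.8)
The plan is to prove the two inclusions $\Ms\subset\Ml$ and $\Ml\subset\MF$ separately; the first is immediate from the definitions, while the second is where the Continuity Lemma does all the work.

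For $\Ms\subset\Ml$ I would simply unwind the definitions, as already noted in the Remark above: if $\nu\in\Ms$ is stable, there is a family $(\pi_\epsilon)_{\epsilon>0}$ with $\pi_\epsilon\in\M_\epsilon$ and $\pi_\epsilon\weakto{\epsilon}{0}\nu$, so $\nu$ is an accumulation point of $(\M_\epsilon)_{\epsilon>0}$ as $\epsilon\to 0$, i.e. $\nu\in\Acc{\epsilon\to 0}{\M_\epsilon}=\Ml$. No further argument is needed.

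For $\Ml\subset\MF$ I would invoke the Continuity Lemma at $\epsilon_0=0$. The one thing to check is that every perturbation is continuous at $\epsilon_0=0$, so that the hypothesis of the lemma is satisfied there. This is forced by the definition of a perturbation: the bound $f_\epsilon(u,f(u))\geq 1-\epsilon$ together with $\sum_{a\in\A}f_\epsilon(u,a)=1$ gives $|f_\epsilon(u,a)-f_0(u,a)|\leq\epsilon$ for all $u\in\A^{\mathcal{N}}$ and $a\in\A$, hence $f_\epsilon$ converges to $f_0$ as $\epsilon\to 0$, which is exactly the continuity of $\epsilon\mapsto f_\epsilon$ at $0$. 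The Continuity Lemma then yields $\Acc{\epsilon\to 0}{\M_\epsilon}\subset\M_0$, and since $\M_0=\MF$ is the set of $F$-invariant measures (with $F$ viewed as the deterministic part of the PCA, via the local rule $f_0$), this reads precisely as $\Ml\subset\MF$.

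I do not expect any real obstacle here: the analytic content is entirely packaged in the Continuity Lemma, whose proof already controls the total variation of the transition kernels by $(\Delta_{\epsilon,\epsilon_0}+1)^{|\A|}-1$. The only subtlety is the bookkeeping around the endpoint $\epsilon_0=0$ — one must ensure the lemma is genuinely applicable at $0$ (it is, since it is stated for $\epsilon_0\geq 0$) and that $F_0$ agrees with the deterministic automaton $F$, so that the inclusion lands in $\MF$ rather than in some a priori larger set.
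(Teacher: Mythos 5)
Your proposal is correct and follows exactly the paper's route: the inclusion $\Ms\subset\Ml$ is immediate from the definitions (the paper records it as a remark), and $\Ml\subset\MF$ is obtained by applying the Continuity Lemma at $\epsilon_0=0$, where continuity at $0$ holds by the very definition of a perturbation (the paper notes that the condition $f_\epsilon(u,f(u))\geq 1-\epsilon$ is equivalent to $f_\epsilon\to f_0$, which you verify explicitly via the stochasticity of $f_\epsilon$). Your bookkeeping that $F_0$ is the deterministic automaton $F$, so the inclusion lands in $\MF$, matches the paper's identification of $\M_0$ with the set of $F$-invariant measures.
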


\subsection{Connectedness of $\Ml$}

\begin{prop}\label{prop.MlConnected}
Let $(F_{\epsilon})_{\epsilon >0}$ be a continuous perturbation of a cellular automaton. The set $\Ml$ is connected.
\end{prop}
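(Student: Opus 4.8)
The plan is to argue by contradiction, exploiting two features of the situation: each fiber $\M_{\epsilon}$ is connected, and the set-valued map $\epsilon\mapsto\M_{\epsilon}$ is upper semicontinuous at every $\epsilon_1>0$, the latter being exactly the content of the Continuity Lemma (Lemma~\ref{Lemma-ContinuityLemma}). First I would record two facts about the fibers. For each $\epsilon>0$ the set $\M_{\epsilon}$ is nonempty, because $F_{\epsilon}$ is a continuous affine self-map of the compact convex set $\M(\A^\Z)$ (it preserves shift-invariance since it commutes with $\sigma$), hence admits a fixed point by Markov--Kakutani. More importantly, $\M_{\epsilon}=\{\mu\in\M(\A^\Z):F_{\epsilon}\mu=\mu\}$ is convex, since $\mu\mapsto F_{\epsilon}\mu$ is affine and shift-invariance is a convex constraint; being a closed convex subset of a compact space it is compact and connected.

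Next, suppose for contradiction that $\Ml$ is disconnected. As $\Ml$ is compact, write $\Ml=A\sqcup B$ with $A,B$ nonempty, closed, and $d_{\M}(A,B)\geq 3\delta$ for some $\delta>0$. Introduce the disjoint open neighborhoods $U_A=\{\mu:d_{\M}(\mu,A)<\delta\}$ and $U_B=\{\mu:d_{\M}(\mu,B)<\delta\}$; by the triangle inequality and $d_{\M}(A,B)\geq 3\delta$ one has $\overline{U_A}\cap U_B=U_A\cap\overline{U_B}=\emptyset$. I would then show that $\M_{\epsilon}\subset U_A\cup U_B$ for all small $\epsilon$: otherwise there is a sequence $\epsilon_n\to 0$ and $\mu_n\in\M_{\epsilon_n}$ with $d_{\M}(\mu_n,A)\geq\delta$ and $d_{\M}(\mu_n,B)\geq\delta$, and by compactness of $\M(\A^\Z)$ a subsequence converges to some $\mu\in\Acc{\epsilon\to 0}{\M_{\epsilon}}=\Ml$ that remains at distance at least $\delta$ from both $A$ and $B$, which is absurd. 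Fix $\epsilon_0>0$ below this threshold.

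For $\epsilon\in(0,\epsilon_0)$ the fiber $\M_{\epsilon}$ is nonempty, connected, and contained in the union of the two disjoint open sets $U_A,U_B$, so it lies entirely in exactly one of them. This yields a partition $(0,\epsilon_0)=S_A\sqcup S_B$, where $S_A=\{\epsilon:\M_{\epsilon}\subset U_A\}$ and $S_B=\{\epsilon:\M_{\epsilon}\subset U_B\}$. Both are nonempty: since $A\subset\Ml$ is nonempty, picking $\mu_A\in A$ together with an approximating sequence $\mu_{\epsilon_n}\to\mu_A$, $\mu_{\epsilon_n}\in\M_{\epsilon_n}$, forces $\M_{\epsilon_n}\subset U_A$ for $n$ large, so $S_A\neq\emptyset$, and symmetrically $S_B\neq\emptyset$. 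It remains to prove that $S_A$ is open (by symmetry so is $S_B$). Take $\epsilon_1\in S_A$ and suppose it is a limit of points $\eta_n\in S_B$; choosing $\nu_n\in\M_{\eta_n}\subset U_B$, a convergent subsequence produces a limit $\nu\in\Acc{\epsilon\to\epsilon_1}{\M_{\epsilon}}\subset\M_{\epsilon_1}$ by Lemma~\ref{Lemma-ContinuityLemma} (using continuity of the perturbation at $\epsilon_1>0$). But then $\nu\in\overline{U_B}$ while $\M_{\epsilon_1}\subset U_A$, contradicting $\overline{U_B}\cap U_A=\emptyset$. Hence $(0,\epsilon_0)=S_A\sqcup S_B$ is a partition into two nonempty open sets, contradicting the connectedness of the interval, and therefore $\Ml$ is connected.

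I expect the openness of $S_A$ and $S_B$ — that is, the impossibility for the ``side'' of $\M_{\epsilon}$ to jump between $U_A$ and $U_B$ as $\epsilon$ varies — to be the crux of the argument, and it is precisely there that the upper semicontinuity supplied by the Continuity Lemma is indispensable; the connectedness of the fibers $\M_{\epsilon}$ (via their convexity) is the other essential ingredient, since without it a single fiber could already meet both neighborhoods.
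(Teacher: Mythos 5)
Your proof is correct, and it rests on the same three ingredients as the paper's: convexity (hence connectedness) of each fiber $\M_{\epsilon}$, the upper semicontinuity given by Lemma~\ref{Lemma-ContinuityLemma}, and compactness of $\M(\A^\Z)$. The way you assemble them, however, is genuinely different. The paper never establishes your global inclusion $\M_{\epsilon}\subset U_A\cup U_B$ for all small $\epsilon$; it keeps the gap region $K=(A'\sqcup B')^c$ in play and runs two nested contradictions: inside an interval whose endpoints carry fibers meeting $A'$ and $B'$, it assumes every fiber avoids $K$ and derives a contradiction by a sup argument (setting $\epsilon_c=\sup\{\epsilon\mid\M_{\epsilon}\subset B'\}$ and applying the Continuity Lemma from both sides to force $\M_{\epsilon_c}$ into $A'$ and $B'$ simultaneously); hence every such interval contains a fiber meeting $K$, and a final compactness extraction along such fibers yields a point of $\Ml$ inside $K$, which is absurd. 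You unnest these two steps: the compactness extraction comes first, in contrapositive form (all fibers near $0$ lie in $U_A\cup U_B$), and the sup argument is replaced by the observation that $(0,\epsilon_0)=S_A\sqcup S_B$ would be a partition of a connected interval into two nonempty open sets. Since the paper's sup argument is essentially a hand-made proof that an interval is connected, the mathematical content coincides, but your organization is cleaner and isolates exactly where each topological fact enters, whereas the paper's version only ever needs to control fibers inside finitely chosen intervals rather than uniformly near $0$. A small bonus of your write-up is the explicit justification (Markov--Kakutani) that $\M_{\epsilon}\neq\emptyset$, which the paper uses tacitly when it picks $\pi_{e_n}\in\M_{e_n}$.
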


\begin{proof}
By contradiction, let us suppose that $\Ml=A\bigsqcup B$ with $A,B$ non-empty closed subsets of $\Ml$. $\Ml$ is a compact of $\M(\A^\Z)$ so $A$ and $B$ must be too. We can then define $\alpha=d_\M(A,B)>0$. Then define 
\begin{itemize}
\item $A^{\prime}=\left\{ \nu\in\M\mid d_\M(A,\nu)<\frac{\alpha}{3}\right\} $
which is open.
\item $B^{\prime}=\left\{ \nu\in\M\mid d_\M(B,\nu)<\frac{\alpha}{3}\right\} $
which is open.
\item $K=\left(A^{\prime}\sqcup B^{\prime}\right)^{c}$ which is closed and thus compact.
\end{itemize}

\begin{figure}[h!]
\begin{centering}
\def\svgwidth{\columnwidth*3/4}
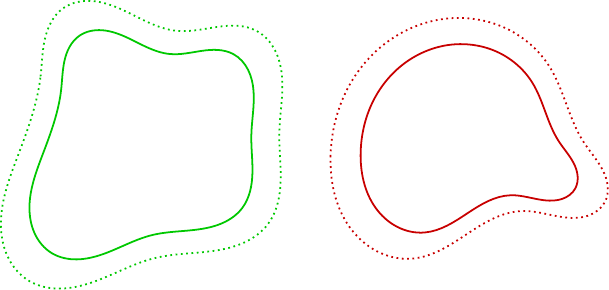
\par\end{centering}
\centering{}\caption{Proof's scheme of Proposition~\ref{prop.MlConnected}.}
\end{figure}

Let $\mu\in A$ and $\nu\in B$. There exist two sequences $(\pi_{\epsilon_n})_{n\in\N}$ and  $(\pi'_{\epsilon'_n})_{n\in\N}$ such that $\pi_{\epsilon_n}\in\M_{\epsilon_n}$, $\pi'_{\epsilon_n}\in\M_{\epsilon'_n}$ for all $n\in\N$ and such that $\pi_{\epsilon_{n}}\weakto n{\infty}\mu$, $\pi'_{\epsilon'_{n}}\weakto n{\infty}\nu$. It is possible to assume that $\epsilon_{n}>\epsilon'_{n}$ and those sequences are strictly decreasing. Define:
\begin{itemize}
\item $V_{A}=\left\{ n\geq 0 \mid d_\M(\mu_{\epsilon_{n}},\mu)\leq\frac{\alpha}{6}\right\} .$
\item $V_{B}=\left\{ n\geq 0 \mid d_\M(\nu_{\epsilon_{n}^{\prime}},\nu)\leq\frac{\alpha}{6}\right\} .$
\end{itemize}
Let $n_{A}\in V_{A}$. As $V_{B}$ is infinite, we can also define $n_{B}\geq n_{A}$ in $V_{B}$. In particular, $\M_{\epsilon_{n_{A}}}\cap A^{\prime}\neq\emptyset$ and $\M_{\epsilon'_{n_B}}\cap B'\neq\emptyset$. We want to exhibit $\epsilon\in\left[\epsilon'_{n_B},\epsilon_{n_A}\right]$ such that $\M_{\epsilon}\cap K\neq\emptyset$. 

By contradiction, assume that $\forall\epsilon\in\left[\epsilon_{n_{B}}^{\prime},\epsilon_{n_{A}}\right]$ we have $\M_{\epsilon}\subset A^{\prime}\sqcup B^{\prime}$. By convexity of $\M_{\epsilon}$, we then have $\M_{\epsilon}\subset A^{\prime}$ or $\M_{\epsilon}\subset B^{\prime}$. Define $E\coloneqq\left\{ \epsilon\in\left[\epsilon_{n_{B}}^{\prime},\epsilon_{n_{A}}\right]\mid\M_{\epsilon}\subset B^{\prime}\right\} $ and $\epsilon_{c}\coloneqq\sup E$ (non-empty set as $\M_{\epsilon_{n_{B}}^{\prime}}\subset B$, and bounded from above by $\epsilon_{n_{A}}$). We can then define the following sequences:
\begin{enumerate}
\item $(e_{n})_{n\in\N}$ such that $e_{n}\to\epsilon_{c}$ and $e_{n}\in E$ for all $n\in\N$, this sequence exists as $\epsilon_{c}$ is a $\sup$. Then take a sequence $\pi_{e_{n}}\in\M_{e_{n}}$ and using sub-sequences of $\left(e_{n}\right)$, we can suppose that $\left(\pi_{e_{n}}\right)$ converges to a $\pi\in\M$. By Lemma~\ref{Lemma-ContinuityLemma}, we have  $\pi\in\M_{\epsilon_{c}}$. But by continuity of the distance, $\pi_{e_{n}}\in B^{\prime}$ for all $n$ implies that $d_\M\left(\pi,B\right)\leq\frac{\alpha}{3}$, and thus $\M_{\epsilon_{c}}\subset B^{\prime}$.
\item $(f_{n})_{n\in\N}$ such that $f_{n}\to\epsilon_{c}$ and $f_{n}\notin E$ for all $n\in\N$, this sequence exists as $\epsilon_{c}$ is a $\sup$. By an analogous reasoning, one obtains $\M_{\epsilon_{c}}\subset A^{\prime}$, and thus a contradiction.
\end{enumerate}

Since $V_A$ is infinite, it is possible to construct an increasing sequence $(\epsilon''_n)_{n\in\N}$ and a sequence $(\pi''_{\epsilon''_n})_{n\in\N}$ such that $\pi''_{\epsilon''_n}\in K\cap\M_{\epsilon''_n}$. By compactness, there exists $\pi\in K\cap\Ml$. This is a contradiction.
\end{proof}

\subsection{Convexity of $\Ms$}
\begin{prop}
The set $\Ms$ of stable measures of a perturbation of a cellular automaton is convex.
\end{prop}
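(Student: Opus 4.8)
The plan is to exhibit, for a convex combination of two stable measures, an explicit family of invariant measures of the perturbed dynamics converging to it as $\epsilon\to 0$. Fix $\nu_1,\nu_2\in\Ms$ and $t\in[0,1]$; I want to show that $t\nu_1+(1-t)\nu_2\in\Ms$. By the definition of stability there are families $(\pi^1_\epsilon)_{\epsilon>0}$ and $(\pi^2_\epsilon)_{\epsilon>0}$ with $\pi^i_\epsilon\in\M_\epsilon$ for all $\epsilon>0$ and $\pi^i_\epsilon\weakto{\epsilon}{0}\nu_i$ for $i\in\{1,2\}$. The natural candidate witnessing the stability of the convex combination is the pointwise convex combination $\pi_\epsilon\coloneqq t\,\pi^1_\epsilon+(1-t)\,\pi^2_\epsilon$.

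The first step is to check that $\pi_\epsilon\in\M_\epsilon$ for every $\epsilon>0$, that is, that $\M_\epsilon$ is convex. This is the only structural input, and it is the same fact already invoked in the proof of Proposition~\ref{prop.MlConnected}: the action $\mu\mapsto F_\epsilon\mu$ on measures is affine (indeed linear), so $F_\epsilon(t\,\pi^1_\epsilon+(1-t)\,\pi^2_\epsilon)=t\,F_\epsilon\pi^1_\epsilon+(1-t)\,F_\epsilon\pi^2_\epsilon=t\,\pi^1_\epsilon+(1-t)\,\pi^2_\epsilon$, and shift-invariance is likewise preserved under convex combinations. Hence $\pi_\epsilon\in\M_\epsilon$.

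The second step is to verify that $\pi_\epsilon\weakto{\epsilon}{0}t\nu_1+(1-t)\nu_2$. It suffices to test on cylinders: for any finite window $\U\subset\Z$ and any pattern $u\in\A^\U$, one has $\pi_\epsilon([u]_\U)=t\,\pi^1_\epsilon([u]_\U)+(1-t)\,\pi^2_\epsilon([u]_\U)$, which converges as $\epsilon\to 0$ to $t\,\nu_1([u]_\U)+(1-t)\,\nu_2([u]_\U)=(t\nu_1+(1-t)\nu_2)([u]_\U)$ by the weak convergence of each defining family. Convergence on all cylinders is precisely weak convergence of $(\pi_\epsilon)_{\epsilon>0}$ toward the convex combination, so $t\nu_1+(1-t)\nu_2$ is stable, and $\Ms$ is convex.

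I do not expect a genuine obstacle here: the statement reduces to the convexity of $\M_\epsilon$ together with the fact that weak convergence commutes with convex combinations. The only point worth a word is that stability demands a single family defined for all $\epsilon>0$ converging as $\epsilon\to 0$, not merely accumulation along a subsequence; but since both $(\pi^1_\epsilon)$ and $(\pi^2_\epsilon)$ are honest $\epsilon\to 0$ limits, so is their pointwise convex combination, and no diagonal extraction is required.
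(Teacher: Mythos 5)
Your proposal is correct and is essentially the paper's own proof: take witnessing families for the two stable measures, form their pointwise convex combination, use convexity of $\M_\epsilon$ to see it stays invariant, and pass to the limit. The only difference is that you spell out the two routine facts the paper leaves implicit (convexity of $\M_\epsilon$ via affinity of the action of $F_\epsilon$, and weak convergence checked on cylinders), which is fine but not a different argument.
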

\begin{proof}
 Let $\mu,\nu\in\Ms$. There exists $(\pi_{\epsilon})_{\epsilon>0}$ and $(\pi'_{\epsilon})_{\epsilon>0}$ such that $\pi_{\epsilon}\weakto{\epsilon}{0}\mu$, $\pi'_{\epsilon}\weakto{\epsilon}{0}\nu$ and $\pi_{\epsilon},\pi'_{\epsilon}\in\M_{\epsilon}$. Thus for $t\in[0,1]$, one has $t\pi_{\epsilon}+(1-t)\pi'_{\epsilon}\weakto{\epsilon}{0} t\mu+(1-t)\nu$ and $t\pi_{\epsilon}+(1-t)\pi'_{\epsilon}\in\M_{\epsilon}$ since $\M_{\epsilon}$ is convex. We deduce that $\Ms$ is convex.
\end{proof}

\subsection{Some consequences of uniformity}

In order to bound the algorithmic complexity of $\Ml$ when it is uniformly approached, we need some topological lemmas which precise that it is sufficient to look $\Ml$ as the set of accumulation point of $\M_\epsilon$ when the diameter of $\M_\epsilon$ is arbitrary small. We first give some implications between the different possible properties of a perturbation of a CA.

For a compact set of measure $\mathcal{K}$ define its \define{diameter} as $\text{diam}(\mathcal{K})=\max\{d(\mu,\nu):\mu,\nu\in\mathcal{K}\}$. The maximum exists since $\mathcal{K}$ is compact.

\begin{prop}\label{prop.limInf}
Let $\left(F_{\epsilon}\right)_{\epsilon >0}$ be a perturbation of a CA such that $\Ml$ is uniformly approached. Then 
$$\liminf_{\epsilon\to0}\textrm{diam}\left(\M_{\epsilon}\right)=0$$

that is to say $\forall\delta>0, \forall\epsilon_0>0, \exists\epsilon<\epsilon_0, \text{diam}(\M_\epsilon)<\delta$.
\end{prop}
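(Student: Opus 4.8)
The plan is to argue by contraposition, playing uniform approachability against a deliberately \emph{badly chosen} family. Negating the conclusion gives $\delta>0$ and $\epsilon_0>0$ such that $\textrm{diam}(\M_\epsilon)\geq\delta$ for every $\epsilon\in\,]0,\epsilon_0[$; I will contradict the hypothesis that $\Ml$ is uniformly approached. First I would record the standing facts I need: each $\M_\epsilon$ is a nonempty closed (hence compact) subset of $\M(\A^\Z)$, so that $\Ml=\Acc{\epsilon\to 0}{\M_\epsilon}$ is itself nonempty, and I may fix once and for all a target measure $\mu\in\Ml$.

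The heart of the argument is the observation that a fat fiber always contains a point far from $\mu$. Indeed, if $a,b\in\M_\epsilon$ realize $d_\M(a,b)=\textrm{diam}(\M_\epsilon)\geq\delta$, then $d_\M(a,\mu)+d_\M(\mu,b)\geq d_\M(a,b)\geq\delta$ by the triangle inequality, so at least one of $a,b$ lies at distance $\geq\delta/2$ from $\mu$. Hence, for each $\epsilon\in\,]0,\epsilon_0[$, I would choose $\rho_\epsilon\in\M_\epsilon$ with $d_\M(\rho_\epsilon,\mu)\geq\delta/2$, and for $\epsilon\geq\epsilon_0$ pick $\rho_\epsilon\in\M_\epsilon$ arbitrarily. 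This produces a legitimate family $(\rho_\epsilon)_{\epsilon>0}$ with $\rho_\epsilon\in\M_\epsilon$ for all $\epsilon$, which systematically avoids a fixed ball around $\mu$.

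To conclude, since $d_\M(\rho_\epsilon,\mu)\geq\delta/2$ for all $\epsilon<\epsilon_0$, no sequence $\epsilon_n\to0$ can yield $\rho_{\epsilon_n}\weakto n{\infty}\mu$, so $\mu\notin\Acc{\epsilon\to 0}{\rho_\epsilon}$. But uniform approachability forces $\Acc{\epsilon\to 0}{\rho_\epsilon}=\Ml\ni\mu$, a contradiction. The argument is short and I do not expect a serious obstacle: the only genuine idea is the construction of the ``worst-case'' family $(\rho_\epsilon)$ that runs away from a chosen $\mu\in\Ml$, which is precisely what a diameter bounded away from $0$ makes possible; the points needing a word of care are merely the nonemptiness and compactness of the $\M_\epsilon$ (guaranteeing both the far point and $\Ml\neq\emptyset$), while the rest is the triangle inequality and the definitions.
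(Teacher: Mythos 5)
Your proof is correct and takes essentially the same approach as the paper: negate the conclusion, then use the diameter lower bound $\textrm{diam}(\M_\epsilon)\geq\delta$ together with the triangle inequality to build a family $(\rho_\epsilon)_{\epsilon>0}$, $\rho_\epsilon\in\M_\epsilon$, that stays at distance $\geq\delta/2$ from a fixed target measure, contradicting uniform approachability. The only cosmetic difference is the choice of target: the paper takes it to be an accumulation point $\pi$ of one branch $(\pi_\epsilon)$ of the diameter-realizing pairs (so its existence comes for free from compactness) and then switches to $\pi'_\epsilon$ whenever $\pi_\epsilon$ is close to $\pi$, whereas you fix an arbitrary $\mu\in\Ml$ and justify $\Ml\neq\emptyset$ separately — both steps rest on the same compactness facts.
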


\begin{proof}
Suppose there exist $\delta>0$ and $\epsilon_0>0$ such that for all $\epsilon<\epsilon_0$, one has $\text{diam}\left(\M_{\epsilon}\right)\geq\delta$. For all $\epsilon<\epsilon_0$, choose $\pi_{\epsilon}$ and $\pi'_{\epsilon}$ in $\M_{\epsilon}$ such that $d_\M\left(\pi_{\epsilon},\pi'_{\epsilon}\right)\geq\delta$. Let $\pi$ be a measure in $\Acc{\epsilon\to0}{\pi_{\epsilon}}$. 
Finally define $\mu_{\epsilon}\in\M_{\epsilon}$
as:
\[
\mu_{\epsilon}=\begin{cases}
\pi_{\epsilon} & \text{if }d_\M\left(\pi_{\epsilon},\pi\right)>\frac{\delta}{2}\\
\pi'_{\epsilon} & \text{if }d_\M\left(\pi_{\epsilon},\pi\right)\leq\frac{\delta}{2}
\end{cases}.
\]
Thus for all $\epsilon<\epsilon_0$, by inverse triangular inequality
\[
d_\M\left(\mu_{\epsilon},\pi\right) \geq \left|d_\M\left(\mu_{\epsilon}, \pi_{\epsilon}\right)-d_\M\left(\pi_{\epsilon},\pi\right)\right| \geq \frac{\delta}{2}.
\]
It follows that $\pi\notin\Acc{\epsilon\to 0}{\mu_{\epsilon}}$, so $\Ml$ is not uniformly approached.
\end{proof}

To sum up, the different properties of the perturbation of a CA satisfy the following implications. 
\begin{prop}
Let $\left(F_{\epsilon}\right)_{\epsilon >0}$ be a perturbation of a CA. One has the following implications:
\begin{center}
\begin{tikzpicture}
 \node (A) at (-1.2,1) {$\M_\epsilon$ is a singleton for all $\epsilon > 0$};
 \node (B) at (-2,-1) {$\Ml$ is a singleton};
 \node (C) at (0,0) {$\displaystyle\lim_{\epsilon\to 0} \text{diam}(\M_\epsilon) = 0$};
 \node (D) at (5,0) {$\Ml$ is uniformly approached};
 \node (E) at (10,0) {$\displaystyle\liminf_{\epsilon\to 0} \text{diam}(\M_\epsilon) = 0$};

\draw (-1,0.5) node[rotate=-45] {$\Longrightarrow$};
\draw (-1,-0.5) node[rotate=45] {$\Longrightarrow$};
\draw (2.3,0) node {$\Longrightarrow$};
\draw (7.7,0) node {$\Longrightarrow$};

\end{tikzpicture} 
\end{center}

\end{prop}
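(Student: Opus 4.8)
The plan is to prove the four implications of the diagram one at a time, reading off the arrows as ``$\M_\epsilon$ singleton for all $\epsilon>0$'' $\Rightarrow$ ``$\lim_{\epsilon\to0}\text{diam}(\M_\epsilon)=0$'', ``$\Ml$ singleton'' $\Rightarrow$ ``$\lim_{\epsilon\to0}\text{diam}(\M_\epsilon)=0$'', ``$\lim_{\epsilon\to0}\text{diam}(\M_\epsilon)=0$'' $\Rightarrow$ ``$\Ml$ uniformly approached'', and ``$\Ml$ uniformly approached'' $\Rightarrow$ ``$\liminf_{\epsilon\to0}\text{diam}(\M_\epsilon)=0$''. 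Throughout I would rely on the elementary fact, immediate from the two definitions, that for any selection $(\pi_\epsilon)_{\epsilon>0}$ with $\pi_\epsilon\in\M_\epsilon$ one has $\Acc{\epsilon\to0}{\pi_\epsilon}\subseteq\Ml$, since an accumulation point of the selection is a fortiori an accumulation point of the family $(\M_\epsilon)_{\epsilon>0}$. The first implication is then immediate, because a singleton has diameter $0$, so the map $\epsilon\mapsto\text{diam}(\M_\epsilon)$ is identically zero. For the second implication I would argue by contradiction: if $\lim_{\epsilon\to0}\text{diam}(\M_\epsilon)$ were not $0$, there would be $\delta>0$ and a sequence $\epsilon_n\to0$ with $\text{diam}(\M_{\epsilon_n})\geq\delta$; choosing $\pi_n,\pi'_n\in\M_{\epsilon_n}$ with $d_\M(\pi_n,\pi'_n)\geq\delta$ and extracting convergent subsequences by compactness of $\M(\A^\Z)$, I obtain two limit points $\pi,\pi'\in\Ml$ with $d_\M(\pi,\pi')\geq\delta>0$, contradicting that $\Ml$ is a singleton.

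The third implication is the one carrying the genuine content, and is the step I expect to need the most care. Fixing an arbitrary selection $(\pi_\epsilon)_{\epsilon>0}$ with $\pi_\epsilon\in\M_\epsilon$, the inclusion $\Acc{\epsilon\to0}{\pi_\epsilon}\subseteq\Ml$ is the general fact recalled above. For the reverse inclusion I would take $\mu\in\Ml$, so that there is a sequence $\epsilon_n\to0$ and $\nu_n\in\M_{\epsilon_n}$ with $\nu_n\weakto{n}{\infty}\mu$, and then estimate $d_\M(\pi_{\epsilon_n},\mu)\leq d_\M(\pi_{\epsilon_n},\nu_n)+d_\M(\nu_n,\mu)\leq\text{diam}(\M_{\epsilon_n})+d_\M(\nu_n,\mu)$, where the first term tends to $0$ by hypothesis and the second by construction. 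Hence $\pi_{\epsilon_n}\weakto{n}{\infty}\mu$ and $\mu\in\Acc{\epsilon\to0}{\pi_\epsilon}$; as the selection was arbitrary, $\Ml$ is uniformly approached.

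Finally, the fourth implication, ``$\Ml$ uniformly approached'' $\Rightarrow$ ``$\liminf_{\epsilon\to0}\text{diam}(\M_\epsilon)=0$'', is exactly the statement of Proposition~\ref{prop.limInf}, so nothing further is required. The only genuine subtlety across the whole argument lies in the third implication: one must use the \emph{same} witnessing sequence $\epsilon_n\to0$ for the chosen family $(\pi_\epsilon)$ and for the sequence $(\nu_n)$ realizing $\mu\in\Ml$, so that the vanishing of $\text{diam}(\M_{\epsilon_n})$ squeezes $\pi_{\epsilon_n}$ and $\nu_n$ together and forces the two families to share their accumulation points; once this is arranged the remaining estimates are routine.
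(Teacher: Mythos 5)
Your proof is correct and matches the paper's treatment: the paper states this proposition without any proof, regarding the two left arrows and the implication $\lim_{\epsilon\to 0}\text{diam}(\M_\epsilon)=0 \Rightarrow \Ml$ uniformly approached as routine consequences of the definitions and of compactness, and deferring the final arrow to Proposition~\ref{prop.limInf}, exactly as you do. Your argument for the third implication — bounding $d_\M(\pi_{\epsilon_n},\mu)\leq \text{diam}(\M_{\epsilon_n})+d_\M(\nu_n,\mu)$ along the \emph{same} sequence $\epsilon_n$ witnessing $\mu\in\Ml$ — is the natural one and has no gaps.
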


\begin{lem}\label{lemma-RestrictionDdelta}
Let $\delta>0$ and $\left(F_{\epsilon}\right)_{\epsilon >0}$ be a perturbation of a CA such that $\Ml$ is uniformly approached. Define $D_{\delta}=\{\epsilon>0:\textrm{diam}(\M_\epsilon)<\delta\}$. Then $D_{\delta}$ is an open set which contains $0$ as accumulation point. Moreover one has

$$\Ml\left(=\Acc{\epsilon\to 0}{(\M_{\epsilon})_{\epsilon>0}}\right)=\Acc{\epsilon\to 0}{(\M_{\epsilon})_{\epsilon\in D_{\delta}}}=\Acc{\epsilon\to 0}{(\M_{\epsilon})_{\epsilon\in D_{\delta}\cap\Q_+}}.$$
\end{lem}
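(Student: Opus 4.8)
The plan is to reduce the whole chain of equalities to a single nontrivial inclusion. Since $D_\delta\cap\Q_+\subset D_\delta\subset\R_+^\ast$ and the set of accumulation points is monotone in the index set (any approaching sequence taken from a smaller index set is also one for the larger), we automatically get
\[
\Acc{\epsilon\to0}{(\M_\epsilon)_{\epsilon\in D_\delta\cap\Q_+}}\subset\Acc{\epsilon\to0}{(\M_\epsilon)_{\epsilon\in D_\delta}}\subset\Acc{\epsilon\to0}{(\M_\epsilon)_{\epsilon>0}}=\Ml .
\]
Hence it is enough to prove the reverse inclusion $\Ml\subset\Acc{\epsilon\to0}{(\M_\epsilon)_{\epsilon\in D_\delta\cap\Q_+}}$ into the smallest of the three sets; this closes the cycle and forces all of them to be equal. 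Before that I would settle the structural claims on $D_\delta$.

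For openness, I would show that $\epsilon\longmapsto\text{diam}(\M_\epsilon)$ is upper semicontinuous on $\R_+^\ast$. Fix $\epsilon_1>0$ and a sequence $\epsilon_n\to\epsilon_1$ realizing $L=\limsup_{\epsilon\to\epsilon_1}\text{diam}(\M_\epsilon)$. Each $\M_{\epsilon_n}$ is compact, so choose $\pi_n,\pi_n'\in\M_{\epsilon_n}$ with $d_\M(\pi_n,\pi_n')=\text{diam}(\M_{\epsilon_n})$; extracting convergent subsequences in the compact space $\M(\A^\Z)$ and using the continuity of the perturbation through Lemma~\ref{Lemma-ContinuityLemma} (which gives $\Acc{\epsilon\to\epsilon_1}{\M_\epsilon}\subset\M_{\epsilon_1}$), the limits $\pi,\pi'$ lie in $\M_{\epsilon_1}$ and satisfy $d_\M(\pi,\pi')=L$. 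Thus $L\leq\text{diam}(\M_{\epsilon_1})$, proving upper semicontinuity, and therefore $D_\delta=\{\epsilon>0:\text{diam}(\M_\epsilon)<\delta\}$ is open. That $0$ is an accumulation point of $D_\delta$ is exactly Proposition~\ref{prop.limInf}; combining this with openness and the density of $\Q$ shows $0$ is also an accumulation point of $D_\delta\cap\Q_+$, so the last $\mathrm{Acc}$ is well defined.

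The heart of the proof is the inclusion $\Ml\subset\Acc{\epsilon\to0}{(\M_\epsilon)_{\epsilon\in D_\delta\cap\Q_+}}$, which I would argue by contradiction using the uniform approach hypothesis. Suppose $\mu\in\Ml$ but $\mu\notin\Acc{\epsilon\to0}{(\M_\epsilon)_{\epsilon\in D_\delta\cap\Q_+}}$; then there are $r>0$ and $\epsilon_0>0$ such that $d_\M(\mu,\M_q)\geq r$ for every rational $q\in D_\delta$ with $q<\epsilon_0$. I would then build a family $(\pi_\epsilon)_{\epsilon>0}$ with $\pi_\epsilon\in\M_\epsilon$ that stays bounded away from $\mu$ for all small $\epsilon$: for $\epsilon<\epsilon_0$ with $\epsilon\notin D_\delta$ pick $\pi_\epsilon$ with $d_\M(\mu,\pi_\epsilon)\geq\delta/2$ (possible since $\text{diam}(\M_\epsilon)\geq\delta$); for rational $\epsilon\in D_\delta$ any choice works by assumption; and for the delicate case of an irrational $\epsilon_1\in D_\delta\cap\,]0,\epsilon_0[$, take rationals $q_m\uparrow\epsilon_1$ (which lie in $D_\delta$ by openness and stay below $\epsilon_0$), pick $\rho_m\in\M_{q_m}$ with $d_\M(\mu,\rho_m)\geq r$, and let $\rho$ be a subsequential limit: by Lemma~\ref{Lemma-ContinuityLemma} one has $\rho\in\M_{\epsilon_1}$ while $d_\M(\mu,\rho)\geq r$, so setting $\pi_{\epsilon_1}=\rho$ keeps this value $\geq r$ from $\mu$. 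Then $d_\M(\mu,\pi_\epsilon)\geq\min(r,\delta/2)>0$ for all $\epsilon<\epsilon_0$, whence $\mu\notin\Acc{\epsilon\to0}{\pi_\epsilon}$, contradicting that $\Ml$ is uniformly approached.

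The main obstacle is precisely this last case of irrational parameters inside $D_\delta$: upper semicontinuity of $\epsilon\mapsto\M_\epsilon$ only pushes nearby measures \emph{into} $\M_{\epsilon_1}$, and since $\text{diam}(\M_{\epsilon_1})$ may be as large as $\delta$, one cannot in general conclude that \emph{all} of $\M_{\epsilon_1}$ is far from $\mu$. The point I would emphasize is that one does not need this: to build the obstructing family it suffices to \emph{select one} far-from-$\mu$ measure in $\M_{\epsilon_1}$, and such a measure is produced exactly as the limit of the far measures $\rho_m$ coming from the nearby rationals. This sidesteps the diameter issue entirely and is what makes the real-to-rational passage go through.
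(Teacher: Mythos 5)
Your proof is correct, but it takes a genuinely different route from the paper's at the crucial step, namely the equality with the rational-indexed accumulation set. The paper argues in two stages: it first proves $\Ml=\Acc{\epsilon\to 0}{(\M_{\epsilon})_{\epsilon\in D_{\delta}}}$ by building an obstructing family (far measures on ${D_\delta}^c$, the given family on $D_\delta$), and then handles the rational case by a separate contradiction that re-applies this intermediate equality with a \emph{shrunken} parameter $\delta'=\min(\delta,\eta/4)$: it locates an irrational $\epsilon_N\in D_{\delta'}$ whose $\M_{\epsilon_N}$ contains both a measure $\eta/2$-close to the offending $\pi$ and (as a limit of measures over nearby rationals) a measure $\eta$-far from it, contradicting $\textrm{diam}(\M_{\epsilon_N})<\delta'$. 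You instead observe that monotonicity of $\textrm{Acc}$ in the index set reduces everything to the single inclusion $\Ml\subset\Acc{\epsilon\to 0}{(\M_{\epsilon})_{\epsilon\in D_{\delta}\cap\Q_+}}$, and you prove it by one direct contradiction with uniform approach: you build an obstructing family whose value at an irrational $\epsilon_1\in D_\delta$ is a subsequential limit of (automatically far) measures $\rho_m\in\M_{q_m}$ over rationals $q_m\uparrow\epsilon_1$, which lies in $\M_{\epsilon_1}$ by Lemma~\ref{Lemma-ContinuityLemma} and stays far by continuity of $d_\M$. This avoids the $\delta'$-shrinking device and any appeal to the diameter bound at the irrational point; your closing remark correctly identifies why this works (one far measure per $\M_\epsilon$ suffices to obstruct uniform approach). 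What the paper's route buys is the intermediate statement $\Ml=\Acc{\epsilon\to 0}{(\M_{\epsilon})_{\epsilon\in D_{\delta}}}$ as an explicit stepping stone and a quantitative use of the diameter control; what yours buys is economy and a cleaner isolation of the mechanism. Two shared caveats, neither of which is a gap specific to you: both proofs invoke Lemma~\ref{Lemma-ContinuityLemma} at positive $\epsilon$, hence implicitly assume the perturbation is continuous there (the lemma's statement does not say so); and your openness argument via upper semicontinuity of $\epsilon\mapsto\textrm{diam}(\M_\epsilon)$ is the paper's closed-complement argument in different clothing, so there you coincide rather than diverge.
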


\begin{proof}
The fact that $0$ is an accumulation point of $D_{\delta}$ is a  direct consequence of of Proposition~\ref{prop.limInf}. 

Let us prove that its complement ${D_{\delta}}^c=\left\{ \epsilon>0\mid\text{diam}\left(\M_{\epsilon}\right)\geq\delta\right\} $ is closed. Choose a sequence $\left(\epsilon_{n}\right)_{n\in\N}$ of ${D_{\delta}}^c$ converging to $\epsilon>0$. For all $n\in\N$, there exist $\pi_n,\pi'_n\in\M_{\epsilon_n}$ such that $d_\M(\pi_n,\pi'_n)\geq\delta$. Taking sub-sequences, we can assume that $(\pi_n)_{n\in\N}$ and $(\pi'_n)_{n\in\N}$ converges respectively to $\pi$ and $\pi'$ and by Lemma~\ref{Lemma-ContinuityLemma} one deduces that $\pi$ and $\pi'$ are in $\M_{\epsilon}$. By continuity of the distance $d(\pi,\pi')\geq\delta$ so $\epsilon\in {D_{\delta}}^c$. Thus ${D_{\delta}}^c$ is closed.

Let us prove by contradiction that $\Ml=\Acc{\epsilon\to 0}{(\M_{\epsilon})_{\epsilon\in D_{\delta}}}$. Suppose that there exists a $\pi\in\Ml\smallsetminus\Acc{\epsilon\to 0}{(\M_{\epsilon})_{\epsilon\in D_{\delta}}}$. Consider $\left(\pi_{\epsilon}\right)_{\epsilon>0}$ be a sequence associated to $(\M_{\epsilon})_{\epsilon>0}$:  by uniformity, $\Acc{\epsilon\to 0}{\pi_{\epsilon}}=\Ml$. As $\pi\notin\Acc{\epsilon\to 0}{(\M_{\epsilon})_{\epsilon\in D_{\delta}}}$, then $\pi\notin\Acc{\epsilon\to 0}{(\pi_{\epsilon})_{\epsilon\in D_{\delta}}}$. By definition, there are $\eta>0$, $\epsilon_{0}>0$ such that $d_\M(\pi_{\epsilon},\pi)>\eta$ for all $\epsilon\in D_{\delta}\cap[0,\epsilon_0]$.

Analogously to the proof of Proposition~\ref{prop.limInf}, for all $\epsilon\notin D_{\delta}$ there exists $\mu_{\epsilon}\in\M_{\epsilon}$ such that $d_\M(\pi,\mu_{\epsilon})\geq\frac{\delta}{2}$ since in this case $\textrm{diam}(\M_{\epsilon})\geq\delta$. Define $\nu_{\epsilon}=\begin{cases} 
	\pi_{\epsilon} & \text{if }\epsilon\in D_{\delta}\\
	\mu_{\epsilon} & \text{if }\epsilon\notin D_{\delta}
\end{cases}$. We have constructed a sequence $\left(\nu_{\epsilon}\right)_{\epsilon>0}$ verifying for all $\epsilon<\epsilon_{0}$ the relation $d_\M\left(\nu_{\epsilon},\pi\right) \geq \min\left(\eta,\frac{\delta}{2}\right)$, so $\pi\notin\Acc{\epsilon\to 0}{\nu_{\epsilon}}$. This contradicts the fact that $\Ml$ is uniformly approached.

Let us prove the last point that is to say that it is possible to obtain $\Ml$ taking the accumulation point in $D_{\delta}\cap\Q$. Suppose there is $\pi\in\Ml\smallsetminus\Acc{\epsilon\to 0}{(\M_{\epsilon})_{\epsilon\in D_{\delta}\cap\Q_+}}$. Then there exists $\eta>0$ and $\epsilon_{0}>0$ such that for all $\epsilon\in D_{\delta}\cap\Q\cap[0,\epsilon_0]$ such that $d_\M\left(\pi,\M_{\epsilon}\right)>\eta$. 

Put $\delta^{\prime}=\min\left(\delta,\frac{\eta}{4}\right)$. Since $\Ml=\Acc{\epsilon\to 0}{(\M_{\epsilon})_{\epsilon\in D_{\delta'}}}$, there is a sequence $(\epsilon_{n})_{n\in\N}$ of element of $D_{\delta^{\prime}}\cap[0,\epsilon_{0}]$ and a sequence $(\pi_{\epsilon_{n}})_{n\in\N}$ such that $\pi_{\epsilon_{n}}\in\M_{\epsilon_{n}}$ for all $n\in\N$ and $\pi_{\epsilon_n}\weakto n{\infty}\pi$. In particular, there is a rank $N$ such that $d_\M\left(\pi_{\epsilon_{N}},\pi\right)<\frac{\eta}{2}$. 

One has $\epsilon_{N}\notin\Q$ since $D_{\delta^{\prime}}\subset D_{\delta}$. Denote by $\left(\gamma_{n}\right)$ a sequence of $D_{\delta^{\prime}}\cap[0,\epsilon_0]\cap\Q$ converging to $\epsilon_{N}$, this sequence exists as $\Q$ is dense in the open set $D_{\delta^{\prime}}$. For all $n\in\N$ consider $\nu_{\gamma_{n}}\in\M_{\gamma_{n}}$, by compactness we can extract a sequence converging to a measure $\nu_{\epsilon_{N}}$ and $\nu_{\epsilon_{N}}\in\M_{\epsilon_{N}}$ by Lemma~\ref{Lemma-ContinuityLemma}. Since $\gamma_{n}\in D_{\delta}\cap\Q\cap[0,\epsilon_0]$, one has $d_\M\left(\pi,\nu_{\gamma_n}\right)>\eta$ and by continuity of the distance we have $d_\M\left(\pi,\nu_{\epsilon_{N}}\right)\geq\eta$.
Then by triangular inequality
\[
d_\M\left(\nu_{\epsilon_{N}},\pi_{\epsilon_{N}}\right) \geq \left|d_\M\left(\nu_{\epsilon_{N}},\pi\right)-d_\M\left(\pi_{\epsilon_{N}},\pi\right)\right| \geq \frac{\eta}{2} \geq 2\delta',
\]
which contradicts $\text{diam}\left(\M_{\epsilon_{n}}\right)<\delta^{\prime}$ as $\epsilon_{N}\in D_{\delta'}$.
\end{proof}

\section{Computability constraints}\label{section.ComputContraint}

Since the set of cellular automata is countable, to characterize the set of zero-noise  limits  measures we can search for discrete constraints and more particularly computability constraints. In this section we recall first some notions of computable analysis before using them to show computability constraints on $\Ml$ and $\Ms$.

\subsection{Computable noise}\label{subsec:computableNoise}

We begin by giving the main definitions of computability on metric spaces. We refer to~\cite{Wei00,BraHerWei08}  for a general introduction. The purpose of this section is to define what is a computable noise.

As is standard in computer science, the informal idea of algorithm can be formalized as a \define{Turing machine}~\cite{Turing36} and a function $f:\N\to\N$ is said to be \define{computable} if there exists an algorithm which, upon input $n$, outputs $f(n)$. This notion naturally extends to countable sets such as $\Q$ that can be explicitly encoded as integers. It is possible to extend the notion of computable reals introduced in~\cite{Turing36} for abstract metric spaces~\cite{GaHoRo10}. For that, let us define a \define{computable metric space} as a triplet $(X,\mathfrak{D},d)$ where $(X,d)$ is a metric space that we assume compact (for this article), $\mathfrak{D}=\left(z_n\right)_{n\in\N}$ is a dense set of $X$ and $d$ is uniformly computable, that is to say there exists a computable $f:\N^3\to\Q$ such that for every $i,j,n\in\N$, $\left|d\left(z_i,z_j\right)-f(i,j,n)\right|\leq 2^{-n}$. An element $x\in X$ is said to be \define{computable} if there is a computable map $f:\N\to\N$ such that $d\left(x,z_{f(n)}\right)\leq 2^{-n}$ for every $n$, whereas $x$ is only \define{limit-computable} if $d\left(x,z_{f(n)}\right)\to 0$ but without any control on the convergence speed.

In this article we consider three types of computable metric spaces that clearly satisfy the definition:
\begin{itemize}
	\item $([0,1],\Q\cap[0,1],d)$ where $d(x,y)=|x-y|$. It can easily be generalized to $[0,1]^d$.
	\item $(\M(\A^\Z),\mathfrak{P},d_{\M})$ where $\M(\A^\Z)$ is the set of shift-invariant probability measures, $\mathfrak{P}$ is the set of shift-invariant probability measures supported by a periodic orbit and $d_{\M}$ is the distance introduced previously, compatible with the weak convergence topology. For $w\in\A^\ast$, denote by $\widehat{\delta_w}$ the shift-invariant probability measure supported by the periodic orbit generated by $w$: we can then describe $\mathfrak{P}$ as $\mathfrak{P}=\{\widehat{\delta_w}:w\in\A^\ast\}$. 
	\item $([0,1]\times\M(\A^\Z), (\Q\cap[0,1])\times\mathfrak{P}, D)$ where $D\left(\left(s,\mu\right),\left(t,\nu\right)\right)=\max\left(\left|t-s\right|,d_{\M}\left(\mu,\nu\right)\right)$. It is simply the product of the two previous examples.
\end{itemize}

A function $F:(X,\mathfrak{D},d)\to (X',\mathfrak{D}',d')$ between computable metric spaces is \define{computable} if there exists two computable functions $a:\N\to\N$ and $b:\N^2\to\mathfrak{D}'$ such that
\begin{itemize}
	\item $d(x,y)\leq a(n)\Longrightarrow d'(F(x),F(y))\leq 2^{-n}$ for all $n\in\N$,
	\item $d(F(z_i),b(i,n))\leq 2^{-n}$ for all $i,n\in\N$.
\end{itemize}

With this setting, a perturbation $(F_\epsilon)_{\epsilon>0}$ of a cellular automaton $F$ is \define{computable} if the function $\epsilon\longmapsto f_\epsilon$ is computable where $f_\epsilon\in [0,1]^{\A^\mathcal{N}\times\A}$ is local rule of $F_\epsilon$. For example, the uniform noise is computable.

\begin{lem}\label{lemma.calculNoise}
If the perturbation $(F_\epsilon)_{\epsilon >0}$ is computable, then $\left(\epsilon,\mu\right)\longmapsto F_{\epsilon}\mu$ is computable.
\end{lem}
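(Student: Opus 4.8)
The plan is to prove that the map $(\epsilon,\mu)\longmapsto F_\epsilon\mu$ is computable as a function between the product space $([0,1]\times\M(\A^\Z),(\Q\cap[0,1])\times\mathfrak{P},D)$ and $(\M(\A^\Z),\mathfrak{P},d_\M)$, using the two-part definition of computability (a modulus of uniform continuity and a computable approximation on the dense set). The key inputs are: the assumed computability of $\epsilon\longmapsto f_\epsilon$, the $2^r$-Lipschitz bound on the action of a PCA proved in the Proposition preceding the Continuity Lemma, and the quantitative estimate $\left|F_\epsilon(x,[u])-F_{\epsilon_0}(x,[u])\right|\leq(\Delta_{\epsilon,\epsilon_0}+1)^{|\A|}-1$ extracted inside the proof of Lemma~\ref{Lemma-ContinuityLemma}.

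\emph{Modulus of continuity.} First I would establish that $F_\epsilon\mu$ depends continuously, with a computable modulus, on both arguments. I would decompose
\[
d_\M(F_\epsilon\mu,F_{\epsilon'}\mu')\leq d_\M(F_\epsilon\mu,F_\epsilon\mu')+d_\M(F_\epsilon\mu',F_{\epsilon'}\mu').
\]
The first term is controlled by the $2^r$-Lipschitz bound: $d_\M(F_\epsilon\mu,F_\epsilon\mu')\leq 2^r d_\M(\mu,\mu')$. For the second term I reuse the cylinder estimate from Lemma~\ref{Lemma-ContinuityLemma}, which gives, uniformly in $x$ and $\mu'$, a bound by $(\Delta_{\epsilon,\epsilon'}+1)^{|\A|}-1$ where $\Delta_{\epsilon,\epsilon'}=\norm{f_\epsilon-f_{\epsilon'}}{}$. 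Since $\epsilon\longmapsto f_\epsilon$ is computable it admits a computable modulus of continuity, so $\Delta_{\epsilon,\epsilon'}$ can be made smaller than any prescribed $2^{-m}$ whenever $|\epsilon-\epsilon'|\leq a_0(m)$ for a computable $a_0$. Combining the two estimates and summing the truncated total-variation distances against the geometric weights $2^{-n}$ yields a computable function $a:\N\to\N$ with
\[
D\bigl((\epsilon,\mu),(\epsilon',\mu')\bigr)\leq a(n)\ \Longrightarrow\ d_\M(F_\epsilon\mu,F_{\epsilon'}\mu')\leq 2^{-n},
\]
which is the first clause of the definition.

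\emph{Approximation on the dense set.} Second I would produce the computable map $b$ approximating $F_\epsilon\mu$ on the dense set $(\Q\cap[0,1])\times\mathfrak{P}$. Given a rational $\epsilon$ and a periodic-orbit measure $\widehat{\delta_w}$, the measure $F_\epsilon\widehat{\delta_w}$ has cylinder values $F_\epsilon\widehat{\delta_w}([u])=\sum_w \Phi([w],[u])\,\widehat{\delta_w}([w])$ that are finite sums of products of the rational-approximable entries $f_\epsilon(\cdot,\cdot)$, hence each is a computable real uniformly in the data. It then remains to exhibit, for each target precision $2^{-n}$, a periodic-orbit measure $\widehat{\delta_v}\in\mathfrak{P}$ with $d_\M(F_\epsilon\widehat{\delta_w},\widehat{\delta_v})\leq 2^{-n}$; this is a standard density argument (approximate the finitely many relevant cylinder frequencies by those of a sufficiently long periodic word $v$), and the search is effective because the cylinder values are computable and the distance $d_\M$ is uniformly computable on $\mathfrak{P}$.

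\emph{Main obstacle.} The routine parts are the two Lipschitz/continuity estimates, which are already essentially done in the cited results. \emph{The hard part will be} making the approximation step \emph{uniformly} effective: I must guarantee that the search for the approximating periodic measure $\widehat{\delta_v}$ halts and that the bound $b$ can be computed from the \emph{index} of the dense point rather than from a concrete real, so that $b:\N^2\to\mathfrak{D}'$ is genuinely total and computable in both its index and precision arguments. This requires a careful bookkeeping of how the uniform computability of $d_\M$ and the computability of $\epsilon\longmapsto f_\epsilon$ combine to certify, for a given precision, a finite segment of cylinders on which to match frequencies; once that effective search is set up, the two clauses of the definition are met and the lemma follows.
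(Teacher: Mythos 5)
Your proposal is correct and follows essentially the same route as the paper's proof: both verify the two clauses of computability by (i) computing the cylinder values $F_\epsilon\widehat{\delta_w}([u])$ as finite sums of products of the computable entries of $f_\epsilon$ and then effectively searching $\mathfrak{P}$ for an approximating periodic-orbit measure, and (ii) obtaining a computable modulus of continuity from a triangle-inequality decomposition combining the $2^r$-Lipschitz bound in the measure argument with a computable bound on $\norm{F_\epsilon\mu-F_{\epsilon'}\mu}{I_n}$ coming from the computability of $\epsilon\longmapsto f_\epsilon$. The only (inessential) divergence is that you quote the estimate $(\Delta_{\epsilon,\epsilon'}+1)^{|\U|}-1$ from the proof of Lemma~\ref{Lemma-ContinuityLemma} where the paper re-derives the bound directly, and note that the exponent there is the window size $|\U|$, not the alphabet size.
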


\begin{proof}
We have to show the effective computability on the rationals and the uniform computability of the continuity modulus.

 For a given $\epsilon\in\left[0,1\right]\cap\Q$ and $\mu\in\mathfrak{P}$, we want to find $\nu\in\mathfrak{P}$ such that $\norm{F_{\epsilon}\mu-\nu}{I_{n}}$ is close to $0$. The function $\epsilon\mapsto f_\epsilon$ is computable, so the values $F_\epsilon([v],[u])=\prod_{i\in I_n}f_{\epsilon}(v_{i+\mathcal{N}})(u_i)$ are computable for $\epsilon\in\Q$ and all finite words $u\in\A^{I_n}, v\in\A^{I_{n+r}}$. As $\mu\in\mathfrak{P}$, the values $\mu([v])$ are also computable. We have that for all $u\in\A^{I_{n}}$, 
\[
F_{\epsilon}\mu([u])=\sum_{v\in\A^{I_{n}+r}}F_{\epsilon}\left(\left[v\right],\left[u\right]\right)\mu\left(\left[v\right]\right)
\]
and thus is also computable. We just have to enumerate $w\in\A^*$ until we find one such that 
\[
\left|F_\epsilon\mu([u]) - \widehat{\delta_w}([u])  \right| \leq \frac{1}{2^{m}}
\]
for all $u\in\A^{\leq n}$ (we will find one eventually, using the density of $\mathfrak{P}$ and computability of $F_\epsilon \mu([u])$), for any $m$ we want. We then have $\norm{F_{\epsilon}\mu-\widehat{\delta_w}}{I_{n}}\leq \frac{|\A|^{|I_n|}}{2^{m+1}}$.

 For the modulus of continuity, observe that for $\epsilon,\epsilon^{\prime}\in\left[0,1\right]$, $\mu,\mu^{\prime}\in\M(\A^\Z),N\geq1$, we have
\begin{align*}
d_{\M}\left(F_{\epsilon}\mu,F_{\epsilon^{\prime}}\mu^{\prime}\right) & \leq d_{\M}\left(F_{\epsilon}\mu,F_{\epsilon^{\prime}}\mu\right)+d_{\M}\left(F_{\epsilon^{\prime}}\mu,F_{\epsilon^{\prime}}\mu^{\prime}\right)\\
 & \leq\frac{1}{2^{N}}+\sum_{n\leq N-1}\frac{1}{2^{n}}\norm{F_{\epsilon}\mu-F_{\epsilon^{\prime}}\mu}{I_{n}}+2^{r}d_{\M}\left(\mu,\mu^{\prime}\right)
\end{align*}
with $\norm{F_{\epsilon}\mu-F_{\epsilon^{\prime}}\mu}{I_{n}}\leq\frac{1}{2}\sum_{u\in\A^{I_{n}}}\sum_{v\in\A^{I_{n+r}}}\left|\prod_{i\in I_{n}}f_{\epsilon}\left(v_{i+\mathcal{N}},u_{i}\right)-\prod_{i\in I_{n}}f_{\epsilon^{\prime}}\left(v_{i+\mathcal{N}},u_{i}\right)\right|$ which is computable by product and sum of computable functions, as the noise is computable. One can use this inequality to compute a modulus of continuity. 
\end{proof}

\subsection{Computability of closed sets}

We want to characterize the complexity of a closed set $K$ of a computable metric space $(X,\mathfrak{D}, d)$. The set is not necessarily discrete, so we can't have a direct definition of computable closed set. Formally, the computability of $K$ is equivalent to the ability of displaying it on a screen with arbitrary precision: is it possible to compute the set of \define{ideal balls} (open or closed) intersecting $K$, respectively defined by $B(z,r)=\{x\in X: d(x,z)<r\}$ and $\overline{B(z,r)}=\{x\in X: d(x,z)<r\}$with $z\in\mathcal{D}$ and $r\in\Q$. More precisely, we are interested in the complexity of the following sets:
\[
\mathcal{N}_{\textrm{out}}(K) := \left\{(x,r)\in\mathfrak{D}\times\Q:d(x,K)\leq r\right\} \quad\textrm{ and }\quad \mathcal{N}_{\textrm{in}}(K) := \left\{(x,r)\in\mathfrak{D}\times\Q:d(x,K)< r\right\}.
\]

The algorithmic complexity of these sets can be measured by considering the arithmetic hierarchies~\cite{Rogers-1987}. With this analogy, the set $K$ is said to be \define{$\Pi_n$-computable} if there exists a computable Boolean function $f:\mathfrak{D}\times\Q\times\N^k\to \{0,1\}$ such that:
\[
(x,r)\in\mathcal{N}_{\textrm{out}}(K) \Longleftrightarrow K\cap\overline{B(x,r)}\ne\emptyset\Longleftrightarrow \underset{\text{$k$ alternating quantifiers}}
{\underbrace{\forall y_1,\exists y_2,\forall y_3,\dots}}
f\left(x,r,y_1,\dots,y_k\right)=1  .
\]
Whereas $K$ is said to be \define{$\Sigma_n$-computable} if there exists a computable Boolean function $f:\mathfrak{D}\times\Q\times\N^k\to \{0,1\}$ such that:
\[
(x,r)\in\mathcal{N}_{\textrm{int}}(K) \Longleftrightarrow K\cap {B(x,r)}\ne\emptyset \Longleftrightarrow \underset{\text{$k$ alternating quantifiers}}
{\underbrace{ \exists y_1,\forall y_2,\exists y_3,\dots}}
f\left(x,r,y_1,\dots,y_k\right)=1  .
\]

A subset $K\subseteq X$ of a computable metric space $(X,\mathfrak{D},d)$ is \define{recursively compact} if $K$ is compact and it is semi-decidable whether a given finite set of open balls (encoded as elements of $\mathfrak{D}\times\Q$) covers $K$. For example, the computable metric spaces $[0,1]$, $\M(\A^\Z)$ and $[0,1]\times\M(\A^\Z)$ are known to be recursively compact, and a $\Pi_1$-computable closed subset of a recursively compact set is also recursively compact.

The definitions of $\Ml$ and $\Ms$ both use sequences of sets. To measure their complexity, we can use the complexity of the whole sequence at once: in our case we can use the notion of \define{uniform $\Pi_1$-computability}, where each set can be described by the same algorithm. The following equivalences for its definition are well-known, see for example \cite{HS18}:
\begin{prop}\label{prop:EquivalencePi1}
Let $\left(K_{i}\right)_{i\in\N}$ be a family of closed sets of a recursively compact set $(X,\mathfrak{D},d)$. The following propositions are equivalent, we say that the family is \define{uniformly $\Pi_1$-computable}.
\begin{enumerate}
	\item There exists $f:\N\times\mathfrak{D}\times\Q\times\N\to\left\{ 0,1\right\} $ computable such that  $\forall i\in\N,\forall(x,r)\in\mathfrak{D}\times\Q$,
	\[
	K_{i}\cap\overline{B}\left(x,r\right)\neq\emptyset\Longleftrightarrow\forall n\in\N,f\left(i,x,r,n\right)=1.
	\]
	\item There exists $g:\N\times\N\times\N\to\left\{ 0,1\right\} $ computable such that $\forall i\in\N,\forall j\in\N$,
	\[
	K_{i}\subset U_j\Longleftrightarrow\exists n\in\N,g\left(i,j,n\right)=1
	\]
	where $(U_j)_{j\in\N}$ is a computable enumeration of finite union of rational balls.
	\item There exists $k:\N\times\mathfrak{D}\times\Q\times\N\to\{0,1\} $ such that $\forall i\in\N$,
	\[
	K_{i}=X\backslash\bigcup_{(x,r)\in E_{i}}B(x,r) \textrm{ where }E_i=\{(x,r):\exists n\in\N,\ k(i,x,r,n)=1\}
	\]
	\item There exists $h:\N\times\M_{\sigma}\left(\A^{\Z}\right)\to\R^{+}$ computable such that $\forall i\in\N$, if $h_{i}$ denotes $h\left(i,\cdot\right)$, then
	\[
	K_{i}=h_{i}^{-1}\left(\left\{ 0\right\} \right).
	\]
\end{enumerate}
\end{prop}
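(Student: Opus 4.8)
The statement asserts the equivalence of four characterizations of uniform $\Pi_1$-computability for a family $(K_i)_{i\in\N}$ of closed sets in a recursively compact space. The plan is to establish a cycle of implications $(1)\Rightarrow(3)\Rightarrow(2)\Rightarrow(1)$ and then close the loop by proving $(1)\Leftrightarrow(4)$, treating each arrow as a uniform (in $i$) version of the corresponding non-uniform statement for a single $\Pi_1$-computable closed set. Since the paper cites~\cite{HS18} for these well-known equivalences, I expect the work to consist of checking that the standard constructions carry the index $i$ through the algorithms without loss of uniformity.

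First I would prove $(1)\Rightarrow(3)$, which is essentially a restatement: the complement of $K_i$ is exactly the union of ideal balls $B(x,r)$ that are disjoint from $K_i$, and $\overline{B}(x,r)\cap K_i=\emptyset$ is the negation of the condition in $(1)$. Concretely, given $f$ from $(1)$, I would set $k(i,x,r,n)$ to search for a witness $n$ certifying $f(i,x',r',\cdot)=0$ at some slightly larger rational ball $\overline{B}(x',r')\supset B(x,r)$ disjoint from $K_i$; the monotone enumeration of such balls recovers $K_i=X\setminus\bigcup_{(x,r)\in E_i}B(x,r)$. Next, for $(3)\Rightarrow(2)$, I would use recursive compactness: $K_i\subset U_j$ for a finite union of rational balls $U_j$ holds if and only if $U_j$ together with the enumerated exterior balls from $E_i$ cover the whole space $X$, and this cover property is semi-decidable by the recursive compactness hypothesis, yielding the existential witness $g(i,j,n)$. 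Then $(2)\Rightarrow(1)$ goes by observing that $K_i\cap\overline{B}(x,r)=\emptyset$ is equivalent to $K_i$ being contained in the open complement of $\overline{B}(x,r)$, which can be approximated from inside by finite unions $U_j$ of rational balls; using $(2)$ this becomes a countable conjunction over $n$ of the computable predicate, giving the $\Pi_1$ form required by $(1)$.

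Finally I would treat $(1)\Leftrightarrow(4)$. For $(4)\Rightarrow(1)$, given the computable $h$ with $K_i=h_i^{-1}(\{0\})$, the condition $K_i\cap\overline{B}(x,r)\neq\emptyset$ becomes $\inf_{\overline{B}(x,r)}h_i=0$; since $h$ is computable and $\overline{B}(x,r)$ is recursively compact, this infimum is upper semi-computable, so its vanishing is a $\Pi_1$ predicate uniform in $i$. For the converse $(1)\Rightarrow(4)$, I would build $h_i$ as a uniformly computable lower bound for the distance function $x\mapsto d(x,K_i)$: using the exterior balls of $E_i$ from characterization $(3)$, one assembles $h_i(x)=\sup_{(z,s)\in E_i}\max(0,\,s-d(x,z))$ as a computable sup of computable functions, which is positive exactly off $K_i$ and vanishes on $K_i$, hence $K_i=h_i^{-1}(\{0\})$. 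The main obstacle throughout is bookkeeping the \emph{uniformity}: every Turing machine constructed must take $i$ as an explicit input and run the same algorithm for all indices, so at each step I would verify that the witnessing functions $f,g,k,h$ depend computably on $i$ and that the appeals to recursive compactness (covering and the upper semi-computability of infima over rational balls) are themselves uniform in the index.
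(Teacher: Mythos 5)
The paper itself gives no proof of this proposition (it defers to~\cite{HS18}), so I compare your plan with the standard argument. Your cycle $(1)\Rightarrow(3)\Rightarrow(2)\Rightarrow(1)$ is correct and is the usual one: exteriors of closed balls are enumerated from the $\Pi_1$ predicate, recursive compactness turns $K_i\subset U_j$ into a $\Sigma_1$ search for a finite subfamily of $E_i$ covering $X$ together with $U_j$, and $K_i\cap\overline{B}(x,r)=\emptyset$ becomes a $\Sigma_1$ search for some $U_j$ made of balls $B(y,s)$ with $d(y,x)>r+s$ and $K_i\subset U_j$. The problems are in the equivalence with $(4)$, in both directions.

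For $(1)\Rightarrow(4)$, your function $h_i(x)=\sup_{(z,s)\in E_i}\max\bigl(0,\,s-d(x,z)\bigr)$ is \emph{not} computable in general: a supremum over an infinite c.e.\ family of computable functions is only lower semi-computable, and with your $E_i$ (essentially all rational balls whose closure misses $K_i$) this supremum equals the distance function $x\mapsto d(x,K_i)$, which genuinely fails to be computable for $\Pi_1$ sets. Concretely, take $X=[0,1]$ and $K=[0,\alpha]$ with $\alpha$ the limit of a strictly decreasing computable sequence of rationals, $\alpha$ not computable; then $K$ satisfies $(1)$ (the predicate $q-r>\alpha$ is semi-decidable), yet your $h(1)=\sup\{z+s-1 : z-s>\alpha\}=1-\alpha$ is not a computable real, so no algorithm evaluates $h$. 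The missing idea is to force decay of the terms so that the supremum (or sum) is effectively finite: for instance, with $(z_n,s_n)_{n\in\N}$ a computable enumeration of $E_i$, set
\[
h_i(x)=\sum_{n\in\N}2^{-n}\min\Bigl(1,\max\bigl(0,\,s_n-d(x,z_n)\bigr)\Bigr),
\]
which converges uniformly with computable modulus, is computable uniformly in $i$, and vanishes exactly on $X\setminus\bigcup_n B(z_n,s_n)=K_i$.

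For $(4)\Rightarrow(1)$, your conclusion is right but the stated reason is inverted. If $\inf_{\overline{B}(x,r)}h_i$ were merely \emph{upper} semi-computable, then ``$\inf=0$'' would read $\forall q>0\,\exists\,\text{stage witnessing }\inf<q$, which is $\Pi_2$, not $\Pi_1$. What recursive compactness actually provides is \emph{lower} semi-computability of the infimum: ``$\inf_{\overline{B}(x,r)}h_i>q$'' is semi-decidable, since it is equivalent to the (uniformly) recursively compact set $\overline{B}(x,r)$ being covered by the effectively open set $\{h_i>q\}$. Hence ``$\inf>0$'' is $\Sigma_1$, and its negation, i.e.\ $K_i\cap\overline{B}(x,r)\neq\emptyset$, is $\Pi_1$ uniformly in $i$, $x$, $r$, as required.
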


\subsection{$\protect\M_{\left[\epsilon^{\prime},\epsilon^{\prime\prime}\right]}$
is $\Pi_{1}$-computable uniformly in $\epsilon^{\prime}$ and $\epsilon^{\prime\prime}$}

To characterize the complexity of $\Ml$, we need to precise the complexity that a closed ball intersects $\M_{\epsilon}$ for $\epsilon$ in some interval. That why one introduces the following set of measures and we want to precise its complexity uniformly according to its bounds. For $\epsilon', \epsilon''\in\Q_+$ such that $\epsilon'<\epsilon''$, define 
\[
\M_{\left[\epsilon',\epsilon''\right]}=\bigcup_{\epsilon'\leq\epsilon\leq\epsilon''}\M_{\epsilon}.
\]

The set $\M_{\left[\epsilon',\epsilon''\right]}$ is closed. Indeed, fix a sequence $\left(\pi_{n}\right)_{n\in\N}$ in $\M_{\left[\epsilon^{\prime},\epsilon^{\prime\prime}\right]}$ converging to $\pi\in\M(\A^\Z)$. For all $n$, there exists $\epsilon_{n}\in\left[\epsilon^{\prime},\epsilon^{\prime\prime}\right]$ such that $\pi_{n}\in\M_{\epsilon_{n}}$. By compactness, we can suppose that $\left(\epsilon_{n}\right)_{n\in\N}$ converges to some $\epsilon\in\left[\epsilon^{\prime},\epsilon^{\prime\prime}\right]$. By Lemma~\ref{Lemma-ContinuityLemma}, we deduce that $\pi\in\M_{\epsilon}\subset\M_{\left[\epsilon^{\prime},\epsilon^{\prime\prime}\right]}$.

\begin{prop}
If the perturbation is computable, then $\protect\M_{\left[\epsilon',\epsilon''\right]}$ is $\Pi_{1}$-computable uniformly in $\epsilon'$ and $\epsilon''$. In other words, there exists a computable map $\varphi:\Q\times\Q\times\mathfrak{P}\times\Q\times\N\to\{0,1\}$ such that 
\[
\overline{B(x,r)}\cap\M_{\left[\epsilon',\epsilon''\right]} \ne \emptyset \Longleftrightarrow \forall n\in\N,\ \varphi(\epsilon',\epsilon'',x,r,n)=1.
\]
\end{prop}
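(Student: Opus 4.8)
The plan is to realize $\M_{\left[\epsilon',\epsilon''\right]}$ as the projection onto the second coordinate of the zero set of a single computable function on the recursively compact product space $[0,1]\times\M(\A^\Z)$, and then to exploit the fact that a projection along a recursively compact factor preserves $\Pi_1$-computability. First I would introduce the map $\Psi:[0,1]\times\M(\A^\Z)\to\R^+$ defined by $\Psi(\epsilon,\mu)=d_{\M}(F_\epsilon\mu,\mu)$. By Lemma~\ref{lemma.calculNoise} the map $(\epsilon,\mu)\mapsto F_\epsilon\mu$ is computable on $[0,1]\times\M(\A^\Z)$, and since $d_{\M}$ is computable, $\Psi$ is computable. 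The crucial observation is that, as every $\mu\in\M(\A^\Z)$ is already shift-invariant and $F_\epsilon$ commutes with $\sigma$, one has $\mu\in\M_\epsilon$ iff $F_\epsilon\mu=\mu$, i.e. iff $\Psi(\epsilon,\mu)=0$. Consequently, for rational $\epsilon'<\epsilon''$,
\[
\overline{B(x,r)}\cap\M_{\left[\epsilon',\epsilon''\right]}\ne\emptyset \iff \exists\,(\epsilon,\mu)\in[\epsilon',\epsilon'']\times\overline{B(x,r)},\ \Psi(\epsilon,\mu)=0,
\]
so that the ball misses $\M_{\left[\epsilon',\epsilon''\right]}$ exactly when the compact tube $T:=[\epsilon',\epsilon'']\times\overline{B(x,r)}$ is contained in the open set $U:=\{\Psi>0\}$.

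Next I would argue that both $U$ and $T$ are effectively given, uniformly in the rational data $(\epsilon',\epsilon'',x,r)$. Since $\Psi$ is computable and $\R^+$-valued, $U=\Psi^{-1}((0,\infty))=\bigcup_{q\in\Q_+}\Psi^{-1}((q,\infty))$ is an effectively open set, that is, a computably enumerable union of rational ideal balls whose enumeration is produced from the algorithm computing $\Psi$. On the other side, $[\epsilon',\epsilon'']$ is a $\Pi_1$-computable closed subset of the recursively compact space $[0,1]$ (its rational endpoints enter computably) and $\overline{B(x,r)}$ is a $\Pi_1$-computable closed subset of the recursively compact space $\M(\A^\Z)$; by the closure property recalled in the excerpt, each is recursively compact, and hence so is their product $T$, seen as a subset of the recursively compact space $[0,1]\times\M(\A^\Z)$, uniformly in $(\epsilon',\epsilon'',x,r)$.

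Finally I would invoke the defining property of recursive compactness: it is semi-decidable whether a finite family of ideal open balls covers $T$. Enumerating the balls of $U$ and testing every finite subfamily for coverage, one semi-decides the inclusion $T\subseteq U$ — if it holds, compactness of $T$ yields a finite subcover that the enumeration eventually reaches, and if it fails no finite subfamily ever covers $T$. Setting $\varphi(\epsilon',\epsilon'',x,r,n)=0$ exactly when this covering search has already succeeded by step $n$ (and $1$ otherwise) then gives $\forall n,\ \varphi=1$ iff $T\not\subseteq U$, i.e. iff $\overline{B(x,r)}\cap\M_{\left[\epsilon',\epsilon''\right]}\ne\emptyset$, which is precisely the announced uniform $\Pi_1$-computability (equivalently the co-c.e.\ character of $\mathcal{N}_{\textrm{out}}(\M_{\left[\epsilon',\epsilon''\right]})$ from Proposition~\ref{prop:EquivalencePi1}).

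The main obstacle is not any single estimate but the careful bookkeeping of uniformity: one must verify that the enumeration of $U$ and the covering semi-decision for $T$ are produced by one algorithm taking $(\epsilon',\epsilon'',x,r)$ as input. This in turn rests on the uniform computability of $\Psi$ coming from Lemma~\ref{lemma.calculNoise} and on the uniform recursive compactness of the tubes $T$; once these are secured, the projection argument is routine, and it is exactly the step where the compactness of the $\epsilon$-factor $[\epsilon',\epsilon'']$ is used to collapse the existential quantifier over $\epsilon$ without raising the quantifier complexity.
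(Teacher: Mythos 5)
Your proposal is correct and follows essentially the same route as the paper: both define $\Psi(\epsilon,\mu)=d_{\M}(F_\epsilon\mu,\mu)$, invoke Lemma~\ref{lemma.calculNoise} for its computability, and reduce the question to whether the zero set $\Psi^{-1}(0)$ meets the tube $[\epsilon',\epsilon'']\times\overline{B(x,r)}$. The only difference is in the final bookkeeping: the paper notes that this tube is itself a rational closed ball of the product space under the max metric, so the uniform $\Pi_1$-computability of $\Psi^{-1}(0)$ applies directly, whereas you unfold that same fact into the covering semi-decision granted by recursive compactness, which is exactly the equivalence recorded in Proposition~\ref{prop:EquivalencePi1}.
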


\begin{proof}
Define $\Psi: \left[0,1\right]\times\M_{\sigma}\left(\A^\Z\right)\longrightarrow\R^{+}$ as $\Psi:\left(\epsilon,\mu\right)\longmapsto d_{\M}\left(F_{\epsilon}\mu,\mu\right)$. By composition and Lemma~\ref{lemma.calculNoise} this function is computable, and so $\Psi^{-1}\left(0\right)$ is a closed $\Pi_{1}$-computable subset of $\left[0,1\right]\times\M_{\sigma}\left(\A^\Z\right)$. 

Finally we have for all $\epsilon^{\prime},\epsilon^{\prime\prime},x,r^{\prime},$
\[
\M_{\left[\epsilon^{\prime},\epsilon^{\prime\prime}\right]}\cap\overline{B}\left(x,r^{\prime}\right)\neq\emptyset\Longleftrightarrow\Psi^{-1}\left(0\right)\cap\left(\left[\epsilon^{\prime},\epsilon^{\prime\prime}\right]\times\overline{B}\left(x,r^{\prime}\right)\right)\neq\emptyset.
\]
As $\left[\epsilon^{\prime},\epsilon^{\prime\prime}\right]\times\overline{B}\left(x,r^{\prime}\right)=\left[\frac{\epsilon^{\prime}+\epsilon^{\prime\prime}}{2}\pm\frac{\epsilon^{\prime\prime}-\epsilon^{\prime}}{2}\right]\times\overline{B}\left(x,r^{\prime}\right)$ is a rational ball for the computable metric space $\left[0,1\right]\times\M(\A^\Z)$ with the distance $D\left(\left(s,\mu\right),\left(t,\nu\right)\right)=\max\left(\left|t-s\right|,d_{\M}\left(\mu,\nu\right)\right)$, and $\Psi^{-1}\left(0\right)$ is a closed $\Pi_{1}$-computable subset, we have our result.

\end{proof}

\subsection{Upper algorithmic bounds for $\Ml$ and $\Ms$}

We first give upper algorithmic bounds for $\Ml$ and $\Ms$ showing that there are $\Pi_{3}$-computable. Then we show that if $\Ml=\Ms$ or if $\Ml$ is uniformly approached, then their complexity are respectively $\Sigma_2$-computable and $\Pi_2$-computable.

\begin{prop}
\label{prop:computablePerturbation}
If $(F_\epsilon)_{\epsilon>0}$ is a computable perturbation of a CA, then $\Ml$ and $\Ms$ are $\Pi_{3}$-computable.
\end{prop}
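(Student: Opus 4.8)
The plan is to express the membership conditions for $\Ml$ and $\Ms$ in terms of the arithmetical hierarchy, by unfolding their definitions as accumulation points and applying the uniform $\Pi_1$-computability of $\M_{[\epsilon',\epsilon'']}$ established in the previous proposition. The key observation is that $\mu\in\Ml$ precisely when $\mu$ is arbitrarily well approximated by measures of $\M_\epsilon$ for arbitrarily small $\epsilon$. Writing this out with the definition of $\Acc{\epsilon\to 0}{\M_\epsilon}$, the statement ``$\overline{B(x,r)}\cap\Ml\ne\emptyset$'' should translate into a formula with a leading $\forall$ (over the threshold $\epsilon_0$, encoded rationally) followed by an $\exists$ (over a rational $\epsilon<\epsilon_0$ and a precision), and then an inner predicate expressing that $\M_\epsilon$ comes close to the ball. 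I would first phrase the approximation using the closed sets $\M_{[\epsilon',\epsilon'']}$ for small rational intervals, so that the inner predicate can be handled by the $\Pi_1$-computable map $\varphi$ of the previous proposition.

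First I would treat $\Ml$. Using recursive compactness of $\M(\A^\Z)$, the condition $\overline{B(x,r)}\cap\Ml\ne\emptyset$ is equivalent, by a standard argument, to: for every rational $\eta>0$, the enlarged ball $\overline{B(x,r+\eta)}$ meets $\M_\epsilon$ for some $\epsilon$ arbitrarily close to $0$. Quantifying over the ``arbitrarily close to $0$'' introduces a second alternation, and the innermost ``$\overline{B}\cap\M_{[\epsilon',\epsilon'']}\ne\emptyset$'' predicate, being $\Pi_1$ by the previous proposition, contributes a $\forall$. Thus I expect a block of the shape $\forall\exists\forall$, giving $\Pi_3$-computability. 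Concretely, I would write something like
\[
\overline{B(x,r)}\cap\Ml\ne\emptyset \iff \forall \eta\in\Q_+,\ \forall\epsilon_0\in\Q_+,\ \exists\epsilon\in\Q_+\cap\ ]0,\epsilon_0[,\ \overline{B(x,r+\eta)}\cap\M_{[\epsilon,\epsilon]}\ne\emptyset,
\]
then collapse the two universal quantifiers and invoke the uniform $\Pi_1$-computability of $\M_{[\epsilon',\epsilon'']}$ to replace the final predicate by $\forall n,\ \varphi(\epsilon,\epsilon,x,r+\eta,n)=1$; the resulting prenex normal form has three alternations beginning with $\forall$, hence $\Pi_3$.

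For $\Ms$, the definition requires a family $(\pi_\epsilon)_{\epsilon>0}$ with $\pi_\epsilon\in\M_\epsilon$ converging to $\mu$, which intuitively carries an extra existential (the choice of family). I would reformulate stability so that $\mu$ is stable iff for every $\eta$ there is $\epsilon_0$ such that for all $\epsilon<\epsilon_0$ the set $\M_\epsilon$ meets $\overline{B(\mu,\eta)}$; unfolding this gives a $\forall\exists\forall$ pattern with the innermost $\Pi_1$ predicate, again landing in $\Pi_3$. The main obstacle I anticipate is bookkeeping the quantifier alternations carefully so as not to accidentally inflate the level: one must verify that the passage from the real threshold $\epsilon_0$ and precision $\eta$ to rational witnesses is sound (using density of $\Q$ and the openness/closedness facts established earlier), that the enlargement trick $r\mapsto r+\eta$ correctly captures intersection with the closed set $\Ml$ via recursive compactness, and that folding the $\Pi_1$ predicate of $\varphi$ into the formula does not introduce a spurious alternation. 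Once these approximations are justified, counting the alternations to obtain $\Pi_3$ is routine.
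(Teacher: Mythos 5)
Your overall strategy---unfold the accumulation-point definition of $\Ml$, enlarge the radius $r\mapsto r+\eta$ to handle closedness, pass to rational parameters, invoke the uniform $\Pi_1$-computability of the preceding proposition, and count a $\forall\exists\forall$ prenex form---is the same as the paper's. But there is a genuine gap at precisely the step you dismiss as bookkeeping: in your displayed equivalence you quantify the noise parameter over $\Q$ only and feed the \emph{degenerate} interval $\M_{[\epsilon,\epsilon]}=\M_\epsilon$, $\epsilon\in\Q$, into $\varphi$. This rational-sampling reduction is unsound. The witnesses in the definition of $\Ml$ are arbitrary reals $\epsilon_n\to 0$, and the only regularity available is Lemma~\ref{Lemma-ContinuityLemma}, which gives \emph{upper} semicontinuity of $\epsilon\mapsto\M_\epsilon$ (limits of measures in $\M_{\epsilon_n}$ lie in $\M_{\epsilon_0}$); your implication ``$\overline{B(x,r)}\cap\Ml\ne\emptyset \Rightarrow$ rational formula'' needs \emph{lower} semicontinuity, which fails in general: $\M_{\epsilon^*}$ at an irrational $\epsilon^*$ may contain measures (created by a bifurcation exactly at $\epsilon^*$) that are far from $\M_\epsilon$ for every rational $\epsilon$, so accumulation along such parameters is invisible to rational sampling. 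Density of $\Q$, which is what you invoke, does not repair this; nor does Lemma~\ref{lemma-RestrictionDdelta}, whose rational-restriction conclusion requires the uniform-approach hypothesis not assumed here. This is exactly why the paper introduces $\M_{[\epsilon',\epsilon'']}=\bigcup_{\epsilon'\leq\epsilon\leq\epsilon''}\M_\epsilon$ with \emph{nondegenerate} rational endpoints and proves its $\Pi_1$-computability uniformly in both endpoints: the union over a full interval sees every real parameter, so ``$\exists$ real $\epsilon\leq\epsilon''$ with $\overline{B(x,r+r')}\cap\M_\epsilon\ne\emptyset$'' is equivalent to ``$\exists$ rational $\epsilon'\leq\epsilon''$ with $\overline{B(x,r+r')}\cap\M_{[\epsilon',\epsilon'']}\ne\emptyset$'', the universally quantified $\epsilon''$ serving as right endpoint. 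With that substitution your quantifier count goes through verbatim and still yields $\Pi_3$; with degenerate intervals your formula defines a possibly strictly smaller set.

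The $\Ms$ half of your proposal has a second, independent problem. Your pointwise reformulation of stability (for every $\eta$ there is $\epsilon_0$ such that $\M_\epsilon$ meets $\overline{B(\mu,\eta)}$ for all $\epsilon<\epsilon_0$) is indeed a correct characterization of a \emph{fixed} $\mu$ being stable, but the object to be decided is $\overline{B(x,r)}\cap\Ms\ne\emptyset$ for a rational ball; converting one into the other costs an existential quantifier over $\mu$ ranging in a compact set of measures, which is not an arithmetical quantifier and cannot simply be ``unfolded''. The paper avoids any quantification over measures by working with ball-intersection statements centered at the rational point $x$ throughout ($\forall r',\exists\epsilon_0,\forall\epsilon<\epsilon_0:\overline{B(x,r+r')}\cap\M_\epsilon\ne\emptyset$), rationalizing the inner universal quantifier via $\M_{[\epsilon',\epsilon'']}$ and the identity $\bigcap_{0<\epsilon<\epsilon_0}\M_\epsilon=\bigcap_{\epsilon',\epsilon''}\M_{[\epsilon',\epsilon'']}$; note that even there, the passage from the ball formula back to an actual stable measure in $\overline{B(x,r)}$ is the delicate point (accumulation points of a non-convergent family need not be stable), so this step deserves care in any write-up. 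In your version, however, the $\Ms$ argument never reaches an arithmetical prenex form at all.
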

\begin{proof}
Let $x\in\mathfrak{P},r\in\Q_+$. One has
\begin{eqnarray*}
 \overline{B(x,r)}\cap\Ml\ne\emptyset&\Longleftrightarrow& \forall r'>0,\forall\epsilon''>0,\exists\epsilon\leq\epsilon'',\ \overline{B(x,r+r')}\cap\M_{\epsilon}\ne\emptyset\\
&\Longleftrightarrow& \forall r'\in\Q^\ast_+,\forall\epsilon''\in\Q^\ast_+,\exists\epsilon'\in\Q^\ast_+\cap[0,\epsilon''],\ \overline{B(x,r+r')}\cap\M_{[\epsilon',\epsilon'']}\ne\emptyset\\
&\Longleftrightarrow&  \forall r'\in\Q^\ast_+,\forall\epsilon''\in\Q^\ast_+,\exists\epsilon'\in\Q^\ast_+\cap[0,\epsilon''],\forall n\in\N,\  \varphi(\epsilon',\epsilon'',x,r+r',n)=1
\end{eqnarray*}
thus $\Ml$ is $\Pi_{3}$-computable.  The first equivalence simply comes from the definition of $\Ml$.

In the same way, one has
\begin{eqnarray*}
 \overline{B(x,r)}\cap\Ms\ne\emptyset&\Longleftrightarrow& \forall r'>0,\exists\epsilon_0>0,\forall\epsilon<\epsilon_0,\ \overline{B(x,r+r')}\cap\M_{\epsilon}\ne\emptyset\\
&\overset{(\star)}{\Longleftrightarrow}& \forall r'\in\Q^\ast_+,\exists\epsilon_0\in\Q^\ast_+,\forall\epsilon',\epsilon''\in\Q^\ast_+\cap[0,\epsilon_0],\ \overline{B(x,r+r')}\cap\M_{[\epsilon',\epsilon'']}\ne\emptyset\\
&\Longleftrightarrow& \forall r'\in\Q^\ast_+,\exists\epsilon_0\in\Q^\ast_+,\forall\epsilon'\epsilon''\in\Q^\ast_+\cap[0,\epsilon_0],\forall n\in\N,\  \varphi(\epsilon',\epsilon'',x,r+r',n)=1
\end{eqnarray*}
thus $\Ms$ is $\Pi_{3}$-computable. The first equivalence simply comes from the definition of $\Ms$. The equivalence $(\star)$ is due to the following equality
\[\bigcap_{0<\epsilon<\epsilon_0}\M_{\epsilon}=\bigcap_{\underset{\epsilon',\epsilon''\in\Q}{0<\epsilon'<\epsilon''<\epsilon_0}}\M_{[\epsilon',\epsilon'']}.\]
The first inclusion is obvious. Reciprocally, let $\mu$ be a measure in the second set. For all $\epsilon$ one can construct a sequence of rationals $(\epsilon_n)_{n\in\N}$ which converges to $\epsilon$ and a sequence $(\pi_n)_{n\in\N}$ which converges to $\mu$ such that $\pi_n\in\M_{\epsilon_n}$. By the continuity lemma \ref{Lemma-ContinuityLemma}, we deduce that $\mu\in\M_{\epsilon}$.
\end{proof}

When $\Ml$ and $\Ms$ are equal, a better upper bound can be obtained.
\begin{prop}
If $(F_\epsilon)_{\epsilon>0}$ is a computable perturbation of a CA such that $\Ml=\Ms$, then this set is $\Sigma_2$-computable.
\end{prop}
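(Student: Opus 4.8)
The plan is to produce, under the hypothesis $\Ml=\Ms=:K$, an explicit $\Sigma_2$-formula for the predicate $B(x,r)\cap K\ne\emptyset$ (open ball), which is precisely what $\Sigma_2$-computability asks for. The backbone is the bridging equivalence
\[
B(x,r)\cap K\ne\emptyset \Longleftrightarrow \exists\,\rho\in\Q\cap(0,r),\ \exists\,\epsilon_0\in\Q_+,\ \forall\,\epsilon\in(0,\epsilon_0):\ \overline{B(x,\rho)}\cap\M_\epsilon\ne\emptyset,
\]
together with a rewriting of the inner $\forall\epsilon$ predicate in terms of the unions $\M_{[\epsilon',\epsilon'']}$, for which uniform $\Pi_1$-computability (the map $\varphi$) is already available.

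First I would prove the bridging equivalence, and this is where the hypothesis $\Ml=\Ms$ genuinely enters. For the forward direction I take $\mu\in K=\Ms$ with $d_\M(x,\mu)<r$, pick a rational $\rho$ with $d_\M(x,\mu)<\rho<r$, and apply the family $(\pi_\epsilon)_{\epsilon>0}$, $\pi_\epsilon\in\M_\epsilon$, witnessing stability: for $\delta=\rho-d_\M(x,\mu)$ there is $\epsilon_0$ such that $d_\M(\pi_\epsilon,\mu)<\delta$ for all $\epsilon<\epsilon_0$, whence $d_\M(x,\pi_\epsilon)<\rho$ and $\overline{B(x,\rho)}\cap\M_\epsilon\ne\emptyset$. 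For the converse I choose, along a sequence $\epsilon_n\to0$, points of $\overline{B(x,\rho)}\cap\M_{\epsilon_n}$; by compactness they accumulate at some $\mu\in\overline{B(x,\rho)}$, which lies in $\Ml$ by definition of accumulation points, hence in $K$ since $K=\Ml$, and $d_\M(x,\mu)\le\rho<r$. The asymmetry is essential: the forward direction needs the ``for all small $\epsilon$'' strength of stability ($\mu\in\Ms$), while the backward direction only delivers an accumulation point ($\mu\in\Ml$); the two are reconciled only because $\Ms=\Ml$.

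The heart of the argument is to convert the inner predicate $\forall\epsilon\in(0,\epsilon_0):\overline{B(x,\rho)}\cap\M_\epsilon\ne\emptyset$, which quantifies over each individual parameter $\epsilon$, into a statement about the bands $\M_{[\epsilon',\epsilon'']}$. I would argue on the negation, using that Lemma~\ref{Lemma-ContinuityLemma} makes $\epsilon\mapsto\M_\epsilon$ upper semicontinuous. If $\overline{B(x,\rho)}\cap\M_{\epsilon^*}=\emptyset$ for some $\epsilon^*\in(0,\epsilon_0)$, then $\M_{\epsilon^*}$ is a compact set contained in the open set $\overline{B(x,\rho)}^{c}$, so upper semicontinuity forces $\M_\epsilon\subset\overline{B(x,\rho)}^{c}$ on a whole rational interval $[\epsilon',\epsilon'']\ni\epsilon^*$ inside $(0,\epsilon_0)$, i.e. $\overline{B(x,\rho)}\cap\M_{[\epsilon',\epsilon'']}=\emptyset$; the reverse implication is immediate since $\M_\epsilon\subset\M_{[\epsilon',\epsilon'']}$. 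Taking contrapositives gives
\[
\left(\forall\,\epsilon\in(0,\epsilon_0):\overline{B(x,\rho)}\cap\M_\epsilon\ne\emptyset\right)\Longleftrightarrow\left(\forall\,\epsilon',\epsilon''\in\Q,\ 0<\epsilon'<\epsilon''<\epsilon_0:\ \overline{B(x,\rho)}\cap\M_{[\epsilon',\epsilon'']}\ne\emptyset\right),
\]
and the right-hand side rewrites through uniform $\Pi_1$-computability as $\forall(\epsilon',\epsilon'')\,\forall n:\varphi(\epsilon',\epsilon'',x,\rho,n)=1$.

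Assembling the pieces, $B(x,r)\cap K\ne\emptyset$ is equivalent to $\exists(\rho,\epsilon_0)\,\forall(\epsilon',\epsilon'',n):\varphi(\epsilon',\epsilon'',x,\rho,n)=1$ with the decidable side constraints $\rho<r$ and $0<\epsilon'<\epsilon''<\epsilon_0$; the two leading rational existentials collapse into one block and the trailing universals into another, so the formula has shape $\exists\forall$ over a computable predicate, establishing that $K$ is $\Sigma_2$-computable. I expect the main obstacle to be exactly the third step: the computable data only provide existential (union) information $\M_{[\epsilon',\epsilon'']}$ about bands of parameters, whereas stability forces a universal quantifier over single parameters $\epsilon$; upper semicontinuity of $\epsilon\mapsto\M_\epsilon$ is precisely what lets emptiness at one parameter propagate to a full subinterval and thereby bridges this mismatch without raising the quantifier complexity.
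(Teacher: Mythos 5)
Your proof is correct and takes essentially the same route as the paper: the same chain of equivalences, using stability of measures in $\Ms$ for the forward direction and accumulation points plus $\Ml=\Ms$ for the backward one, then reducing the inner $\forall\epsilon$ predicate to the rational bands $\M_{[\epsilon',\epsilon'']}$ and their uniform $\Pi_1$-computability $\varphi$, yielding an $\exists\forall$ formula. The only difference is presentational: you justify the passage from individual parameters $\epsilon$ to rational bands by upper semicontinuity of $\epsilon\mapsto\M_\epsilon$, while the paper uses the equivalent sequential-compactness argument, both resting on Lemma~\ref{Lemma-ContinuityLemma}.
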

\begin{proof}
Let $x\in\mathfrak{P},r\in\Q_+$. One has
\begin{eqnarray*}
B(x,r)\cap\Ms\ne\emptyset&\Longleftrightarrow& \exists r'>0,\exists\epsilon_0>0,\forall\epsilon\leq\epsilon_0,\  \overline{B(x,r-r')}\cap\M_{\epsilon}\ne\emptyset\\
	&\Longleftrightarrow& \exists r'\in\Q^\ast_+,\exists\epsilon_0\in\Q^\ast_+,\forall\epsilon',\epsilon''\in\Q^\ast_+\cap[0,\epsilon_0],\ \overline{B(x,r-r')}\cap\M_{[\epsilon',\epsilon'']}\ne\emptyset\\
	&\Longleftrightarrow& \exists r'\in\Q^\ast_+,\exists\epsilon_0\in\Q^\ast_+,\forall\epsilon',\epsilon''\in\Q^\ast_+\cap[0,\epsilon_0],\forall n\in\N,\  \varphi(\epsilon',\epsilon'',x,r-r',n)=1
\end{eqnarray*}
\end{proof}

Another natural constraint on the perturbation of the CA is when the zero-noise limit of invariant measures $\Ml$ is uniformly approached. We note that in this case, if $\Ml$ is not reduced to a singleton, $\Ms$ is empty.

\begin{prop} \label{prop:UniformApproachPi2}
If $(F_\epsilon)_{\epsilon>0}$ is a computable perturbation of a CA such that $\Ml$ is uniformly approached, then this set is $\Pi_2$-computable.
\end{prop}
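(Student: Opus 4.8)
The plan is to sharpen the $\Pi_3$ bound of Proposition~\ref{prop:computablePerturbation} by exploiting the uniform approach through Lemma~\ref{lemma-RestrictionDdelta}, which lets us recover $\Ml$ using only those noise levels $\epsilon$ for which $\M_\epsilon$ has small diameter, together with the fact recorded in Proposition~\ref{prop:EquivalencePi1}(2) that \emph{containment in an open set} is semi-decidable for a uniformly $\Pi_1$-computable family. The $\Pi_3$ characterization is expensive because it tests, for arbitrarily small $\epsilon$, whether $\M_{[\epsilon',\epsilon'']}$ \emph{intersects} a ball, and this intersection test is only $\Pi_1$; the key idea is to replace intersection by \emph{inclusion in an open ball}, which is $\Sigma_1$ and therefore collapses one quantifier alternation.

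Concretely, I would prove that for $x\in\mathfrak{P}$ and $r\in\Q_+$,
\[
\overline{B(x,r)}\cap\Ml\ne\emptyset \iff \forall r',\epsilon_0\in\Q^\ast_+\ \exists\,\epsilon',\epsilon''\in\Q^\ast_+\cap(0,\epsilon_0),\ \epsilon'<\epsilon'',\ \M_{[\epsilon',\epsilon'']}\subseteq B(x,r+r').
\]
Granting this equivalence, the right-hand side has the desired complexity: the two universal quantifiers on $(r',\epsilon_0)$ merge into one, and by the uniform $\Pi_1$-computability of $\M_{[\epsilon',\epsilon'']}$ established above the inclusion $\M_{[\epsilon',\epsilon'']}\subseteq B(x,r+r')$ is a $\Sigma_1$ predicate (of the form $\exists m,\ g(\epsilon',\epsilon'',x,r+r',m)=1$ with $g$ computable), whose leading $\exists$ is absorbed into the existential block $\exists(\epsilon',\epsilon'')$. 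The statement is thus of the form $\forall\,\exists\,(\textrm{decidable})$, i.e. $\Pi_2$-computable.

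The implication from right to left is the easy, compactness-based direction and does not use the uniformity: taking $r'=\epsilon_0=1/k$ produces $\epsilon'_k\to 0$ and points $\pi_k\in\M_{\epsilon'_k}\subseteq B(x,r+1/k)$, any accumulation point of which lies in $\overline{B(x,r)}\cap\Ml$ (here I use that $\M_\epsilon\ne\emptyset$, so the inclusion is not vacuous). The substantive direction is left to right. Given $\mu\in\overline{B(x,r)}\cap\Ml$ and fixed $r',\epsilon_0$, I would apply Lemma~\ref{lemma-RestrictionDdelta} with $\delta=r'/4$ to obtain a \emph{rational} $\epsilon_*<\epsilon_0$ with $\mathrm{diam}(\M_{\epsilon_*})<r'/4$ and $\M_{\epsilon_*}\cap B(\mu,r'/4)\ne\emptyset$; the small diameter then upgrades this single intersection point to the inclusion $\M_{\epsilon_*}\subseteq B(\mu,r'/2)\subseteq B(x,r+r')$, with room to spare.

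It remains to pass from one level $\epsilon_*$ to a whole rational interval, and this is the main obstacle — and exactly the place where upper semicontinuity (the inclusion $\Acc{\epsilon\to\epsilon_*}{\M_\epsilon}\subseteq\M_{\epsilon_*}$ from Lemma~\ref{Lemma-ContinuityLemma}) is indispensable. Since $\M_{\epsilon_*}$ is a compact set sitting strictly inside the \emph{open} ball $B(x,r+r')$, a standard extraction argument shows that $\bigcup_{\epsilon\in[\epsilon',\epsilon'']}\M_\epsilon$ still lies in $B(x,r+r')$ for a sufficiently narrow rational interval $[\epsilon',\epsilon'']\ni\epsilon_*$ with $0<\epsilon'<\epsilon''<\epsilon_0$, which is precisely the witness required on the right-hand side. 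I expect the delicate bookkeeping to lie in coordinating the three radii ($\delta$, the intersection radius, and the enlargement $r'$) so that the final inclusion is into the open ball with margin; it is this openness that makes the predicate $\Sigma_1$ rather than $\Pi_1$, and hence keeps the whole expression at level $\Pi_2$.
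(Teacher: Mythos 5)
Your proof is correct, and its skeleton is the same as the paper's: in both arguments the hard direction rests on Lemma~\ref{lemma-RestrictionDdelta} (uniform approach lets one recover $\Ml$ from rational noise levels at which $\M_\epsilon$ has small diameter), and the complexity collapse from $\Pi_3$ to $\Pi_2$ comes from exactly the idea you isolate, namely trading the $\Pi_1$ test ``$\M_\epsilon$ meets a closed ball'' for the $\Sigma_1$ test ``$\M_\epsilon$ is contained in an open ball''. The only real divergence is how the inner $\Sigma_1$ predicate is packaged. The paper quantifies over a single rational $\epsilon$ together with an ideal center $y\in\mathfrak{P}$, requiring $\M_\epsilon\subset B(y,\delta)$ and $\overline{B(y,\delta)}\subset B(x,r+r'+2\delta)$; since $\M_\epsilon=\M_{[\epsilon,\epsilon]}$ is uniformly $\Pi_1$-computable, nothing more is needed. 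You instead quantify over rational intervals with $\epsilon'<\epsilon''$ and require $\M_{[\epsilon',\epsilon'']}\subseteq B(x,r+r')$ directly; this removes the auxiliary witness $y$ but forces your extra step of widening the single level $\epsilon_*$ given by Lemma~\ref{lemma-RestrictionDdelta} to an interval, which you handle correctly by compactness plus the upper semicontinuity of $\epsilon\mapsto\M_\epsilon$ from Lemma~\ref{Lemma-ContinuityLemma} (legitimate here, since a computable perturbation is in particular continuous at every $\epsilon_*>0$). That step is in fact avoidable: nothing in the quantifier count requires $\epsilon'<\epsilon''$, so allowing the degenerate interval $\epsilon'=\epsilon''$ would let $\M_{\epsilon_*}$ itself serve as the witness, and your argument would then coincide with the paper's almost verbatim. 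Your bookkeeping of radii (landing in $B(x,r+r'/2)$, with margin $r'/2$ inside $B(x,r+r')$) is sound, as is your observation that the easy direction uses only $\M_\epsilon\neq\emptyset$ and not uniformity.
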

\begin{proof}

Let $x\in\mathfrak{P},r\in\Q_+$, one has
\begin{eqnarray*}
	\overline{B(x,r)}\cap\Ml\ne\emptyset & \Longrightarrow & \forall\delta\in\Q_+^\ast, \forall r'\in\Q_+^\ast,\forall\epsilon_0\in\Q_+^\ast, \exists\epsilon\in[0,\epsilon_0]\cap\Q^\ast_+\cap D_\delta, \  \overline{B(x,r+r')}\cap\M_{\epsilon}\ne\emptyset  \\
	 & \Longrightarrow & \forall\delta\in\Q_+^\ast, \forall r'\in\Q_+^\ast, \forall\epsilon_0\in\Q_+^\ast, \exists\epsilon\in [0,\epsilon_0]\cap\Q^\ast_+, \exists y\in\mathfrak{P}, \ 
	\begin{cases}
		\M_{\epsilon}\subset B(y,\delta) \\
		\overline{B(x,r+r')}\cap\M_{\epsilon}\ne\emptyset
	\end{cases}  \\
	& \Longrightarrow & \forall\delta\in\Q_+^\ast, \forall r'\in\Q_+^\ast, \forall\epsilon_0\in\Q_+^\ast, \exists\epsilon\in [0,\epsilon_0]\cap\Q^\ast_+, \exists y\in\mathfrak{P}, \ 
	\begin{cases}
		\M_{\epsilon}\subset B(y,\delta)\\
		\overline{B(y,\delta)}\subset B(x,r+r'+2\delta)
	\end{cases} \\
	& \Longrightarrow & \underbrace{\forall\delta\in\Q_+^\ast, \forall r'\in\Q_+^\ast}, \forall\epsilon_0\in\Q_+^\ast, \exists\epsilon\in [0,\epsilon_0]\cap\Q^\ast_+, \ \M_{\epsilon}\subset B(x,r+ \underbrace{r'+2\delta}) \\
	& \Longrightarrow & \qquad \forall r''\in\Q_+^\ast, \qquad \forall\epsilon_0\in\Q_+^\ast, \exists\epsilon\in [0,\epsilon_0]\cap\Q^\ast_+, \ \M_{\epsilon}\subset B(x,r+\quad r'') \\
	& \Longrightarrow & \overline{B(x,r)}\cap\Ml\ne\emptyset
\end{eqnarray*}
where the first implication comes from Lemma~\ref{lemma-RestrictionDdelta} which states that $\Ml=\Acc{\epsilon\to 0}{(\M_{\epsilon})_{\epsilon\in D_{\delta}\cap\Q_+}}$. Thus we have the equivalence 
\[
\overline{B(x,r)}\cap\Ml\ne\emptyset \Longleftrightarrow \forall\delta\in\Q_+^\ast, \forall r'\in\Q_+^\ast, \forall\epsilon_0\in\Q_+^\ast, \exists\epsilon\in [0,\epsilon_0]\cap\Q^\ast_+, \exists y\in\mathfrak{P}, \ 
	\begin{cases}
		\M_{\epsilon}\subset B(y,\delta)\\
		\overline{B(y,\delta)}\subset B(x,r+r'+2\delta)
	\end{cases}
\]
and $\Ml$ is $\Pi_2$-computable since $\M_{\epsilon}$ and $\overline{B(y,\delta)}$ are $\Pi_1$-computable uniformly in $\epsilon$, $y$ and $\delta$ and it is semi-decidable to know if a $\Pi_1$-computable set is included in a rational ball.
\end{proof}

\section{Theorem of realization}\label{Section.TheoremeRealisation}

The purpose of this section is to prove the following theorem, showing that Proposition \ref{prop:UniformApproachPi2} can be thought as a characterization of $\Pi_2$-computable connected compact sets of measures.

\begin{thm}
\label{thm:realization}
Let $\mathcal{K}$ be a $\Pi_{2}$-computable connected compact subset of $\M\left(\A^\Z\right)$. There exists a cellular automaton $F$ on $\B^\Z$ with $\A\subset\B$ such that, perturbed by a uniform noise, we obtain $\Ml=\mathcal{K}$. Moreover, $\Ml$ is uniformly approached.
\end{thm}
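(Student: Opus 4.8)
The plan is to build the cellular automaton $F$ on an enlarged alphabet $\B \supset \A$ by adapting the construction of \cite{HS18}, which realizes asymptotic limit sets of deterministic dynamics, to the noisy setting. The central mechanism is a distinguished symbol, call it a \emph{seed}, whose only way to appear is either in an initial configuration or, crucially in our context, as a result of the uniform noise. A seed initializes a computation zone which lays out, to its right, a space-time diagram that computes better and better approximations of the target set $\mathcal{K}$. Since $\mathcal{K}$ is $\Pi_2$-computable, I can encode an algorithm which, given more and more time, enumerates finite unions of rational balls approximating $\mathcal{K}$ from outside and certifies membership of periodic measures; the key point is to arrange that a zone which has run for time $T$ displays a configuration whose empirical measure is within $o(1)$ of some point of $\mathcal{K}$, and that as $T \to \infty$ every point of $\mathcal{K}$ is approached because of connectedness (one sweeps continuously through $\mathcal{K}$). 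Competition between zones is resolved by a \emph{youngest-wins} rule: when two computation zones collide, the one born more recently (closer to the last perturbation) erases the older one.

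First I would make precise the age mechanism and the role of $\epsilon$. Each cell carries a counter tracking the time since its controlling seed was created; the youngest-wins collision rule guarantees that at any site the surviving zone is the one whose seed is the most recent perturbation affecting that region. Here is where the quantitative heart of the argument lies: for a perturbation of level $\epsilon$, a typical cell sees a fresh seed roughly every $1/\epsilon$ steps, so the controlling zone at a given site has had time of order $1/\epsilon$ to compute. Thus as $\epsilon \to 0$, the typical computation age diverges, and the empirical statistics of the configuration converge to $\mathcal{K}$. I would quantify this by showing $\M_\epsilon \subset \{\mu : d_\M(\mu,\mathcal{K}) < g(\epsilon)\}$ for some $g(\epsilon)\to 0$, proving $\Ml \subset \mathcal{K}$, and conversely that every point of $\mathcal{K}$ arises as an accumulation point by choosing, for each target $\mu \in \mathcal{K}$, a sequence $\epsilon_n \to 0$ along which the dominant computation age is tuned so that the displayed approximation lands near $\mu$; connectedness of $\mathcal{K}$ is what lets a single continuously-varying computation parameter realize all of $\mathcal{K}$ rather than an isolated collection of points.

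To obtain uniform approachedness, I would argue that the dependence on $\epsilon$ is \emph{forced}: the computation age at a typical site is essentially determined by $\epsilon$ alone (it is concentrated around $1/\epsilon$ by the law of large numbers on the Bernoulli noise), so \emph{any} family $(\pi_\epsilon)_{\epsilon>0}$ with $\pi_\epsilon \in \M_\epsilon$ is squeezed into the same thin annulus around the $\epsilon$-dependent target, and hence $\Acc{\epsilon\to 0}{\pi_\epsilon}$ cannot depend on the choice of family. In particular $\mathrm{diam}(\M_\epsilon) \to 0$ along the relevant scale, which by the implications diagram forces uniform approachedness. A delicate side point is that $F$ must not be surjective (otherwise by \cite{MST19} the measure $\M_\epsilon$ collapses to uniform Bernoulli); the seed/competition structure naturally breaks surjectivity, and I would verify this explicitly.

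The main obstacle is controlling the \emph{robustness of the computation against the noise itself}. Unlike in \cite{HS18}, where the space-time diagram evolves deterministically, here every cell is independently corrupted with probability $\epsilon$ at each step, so a long computation will almost surely be hit by errors before it finishes. The resolution, which I expect to be the technically demanding step, is that an error does not destroy a zone but rather plants a new seed that starts a \emph{younger} computation, which then overwrites the corrupted region via the youngest-wins rule. Thus a perturbation inside a zone is not a catastrophic failure but simply a restart: the effective computation time between resets is precisely the inter-arrival time of perturbations, of order $1/\epsilon$, and the corrupted older material is swept away. I would need a careful probabilistic estimate showing that, in the stationary regime, the fraction of sites controlled by a zone of age at least $t$ decays like $(1-\epsilon)^{ct}$, so that on the one hand fresh errors keep ages bounded (preventing runaway computation that would overshoot $\mathcal{K}$), while on the other hand ages of order $1/\epsilon$ dominate (guaranteeing good approximation). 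Balancing these two effects — enough time to compute, but errors frequent enough to keep the statistics concentrated — is the crux of transporting the deterministic construction into the noisy world.
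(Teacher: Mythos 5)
Your overall architecture coincides with the paper's: a seed symbol that can only be born from the noise, protected zones with a logarithmic-size computation zone and a linear-size display zone, a youngest-wins collision rule, and a computable sequence of words $(\omega_n)$ with $d_\M\bigl(\widehat{\delta_{\omega_n}},\widehat{\delta_{\omega_{n+1}}}\bigr)\to 0$ sweeping through $\mathcal{K}$, connectedness entering exactly as you say. The genuine gap is in the quantitative heart of your argument, namely the claim that the age of the zone controlling a typical site is ``concentrated around $1/\epsilon$ by the law of large numbers.'' This is wrong on two counts. First, the scale: the controlling seed is the most recent $*$ in the \emph{two-dimensional} space-time dependence cone of the site, a region containing roughly $t^2/2$ cells up to depth $t$; since each space-time cell carries a seed with probability $\epsilon/|\B|$, the age of the most recent seed is of order $\sqrt{|\B|/\epsilon}$, not $1/\epsilon$ (the paper: a construction lives on average $\tfrac12\sqrt{|\B|/\epsilon}$). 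Your estimate that the fraction of sites of age at least $t$ decays like $(1-\epsilon)^{ct}$ has the same flaw: the exponent must be quadratic in $t$. Second, and more seriously, the age does \emph{not} concentrate: writing $X\sim\mathcal{G}(\epsilon/|\B|)$ for the index of the first seed in the cone, the age is about $\sqrt{2X}$, and $\epsilon X$ converges in law to an exponential random variable, so the age has multiplicative fluctuations of the same order as its typical value. All one can say is that it lies in $\bigl[(1/\epsilon)^{1/2-\eta},(1/\epsilon)^{1/2+\eta}\bigr]$ with probability tending to $1$, which is what the paper proves.

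Your proof does not survive this non-concentration. Different sites are controlled by seeds whose ages are spread over that whole polynomial range, so an invariant measure is close not to a single displayed approximation but to a \emph{mixture} of the measures $\widehat{\delta_{\omega_n}}$ over all $n$ whose display interval meets the range. Such a mixture need not be near $\mathcal{K}$: $\mathcal{K}$ is connected but not convex, and closeness of consecutive measures does not prevent far-apart ones from entering the mixture, so your claimed inclusion $\M_\epsilon\subset\{\mu : d_\M(\mu,\mathcal{K})<g(\epsilon)\}$ fails (one only gets closeness to the convex hull), and with it both $\Ml\subset\mathcal{K}$ and the ``thin annulus'' uniformity argument. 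The paper's fix, which your proposal is missing (you never specify the display schedule), is to make the schedule doubly exponential, $T_n=2^{2^n}$, so that $T_{n+1}=T_n^2$: with $a_\epsilon=(1/\epsilon)^{1/2-\eta}$, $b_\epsilon=(1/\epsilon)^{1/2+\eta}$ and $\eta<1/6$, the whole fluctuation interval $[a_\epsilon,b_\epsilon]$ sits inside $[T_{n_\epsilon},T_{n_\epsilon+2}]$, hence with probability tending to one only the two consecutive words $\omega_{n_\epsilon},\omega_{n_\epsilon+1}$ are visible. One then shows $d_\M\bigl(\pi_\epsilon,\,p^1_\epsilon\widehat{\delta_{\omega_{n_\epsilon}}}+p^2_\epsilon\widehat{\delta_{\omega_{n_\epsilon+1}}}\bigr)\to 0$ for \emph{every} choice $\pi_\epsilon\in\M_\epsilon$, and since consecutive measures are close the unknown weights $p^i_\epsilon$ are harmless: this yields $\Ml=\mathcal{K}$ and uniform approachedness simultaneously. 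With any slower schedule allowing several words inside $[a_\epsilon,b_\epsilon]$, the construction would no longer realize $\mathcal{K}$. A minor further inaccuracy: not every error acts as a restart; only errors producing $*$ do. Errors inside a computation zone do corrupt it and are rare only because that zone has logarithmic size, and single errors in the display zone are neutralized by the isolated-symbol erasure rule rather than by the youngest-wins mechanism.
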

\begin{rem}
 In this Theorem the cellular automaton cannot be assumed surjective since by Theorem 4.1 of \cite{MST19}, a surjective cellular automaton perturbed by a uniform noise has only the uniform Bernoulli measure as unique invariant measure. Thus $\Ml$ is the singleton with this measure.
\end{rem}

Fix a $\Pi_{2}$-computable connected set $\mathcal{K}$ of $\M\left(\A^\Z\right)$. Let us construct a cellular automaton $F$ on $\B^\Z\supset\A^\Z$ such that, perturbed by a uniform noise, we obtain $\Ml=\mathcal{K}\subset\M\left(\B^\Z\right)$ (more precisely, the natural projection of $\mathcal{K}$ on $\M\left(\B^\Z\right)$).

Following the characterizations of connected $\Pi_2$-computable set obtained in Proposition 6 of \cite{HS18} or Proposition 5 of \cite{GST23}, for a connected $\Pi_2$-computable set $\mathcal{K}$, there exists a  computable sequence of words $\left(\omega_{n}\right)$ be such that 
\[
\begin{cases}
d_\M\left(\widehat{\delta_{\omega_{n}}},\widehat{\delta_{\omega_{n+1}}}\right)\tendsto n{\infty}0\\
\text{Acc}\left(\left(\widehat{\delta_{\omega_{n}}}\right)_{n}\right)=\mathcal{K}
\end{cases}
\]
where $\widehat{\delta_{u}}$ is the shift-invariant measure supported on the periodic orbit of the repeated pattern $u\in\A^n$ for some $n\in\N$. As the distance between two successive measures tends to zero, we can write $\mathcal{K}=\text{Acc}\left(\text{Conv}\left(\widehat{\delta_{\omega_{n}}},\widehat{\delta_{\omega_{n+1}}}\right)\right)$.

\subsection{Main ideas}

The construction is essentially similar to the one described in~\cite{HS18}. It is in some ways different for two reasons: as we make computations on the invariant measures there is no \emph{initial} configuration, so we can consider events arbitrarily far in the past; also, as we take the limit $\epsilon\to 0$, there is no need for a merging process.  The main ideas are as follows:
\begin{itemize}
    \item A special symbol denoted by $*$ constructs three zones delimited by walls. The walls erase (almost) anything in their path and protect the content of their respective zones from outside. The $*$ can only appear with an error.
    \item The center zone is the \define{display zone}: delimited by a protective wall with speed $1$, it is where the successive $\omega_i$ will be sent. Without errors, only symbols of $\A$ are in it, and the CA only acts as the left shift $\sigma$ on those symbols. (See the introduction of section \ref{subsec:computations} and \ref{subsec:Errors})
    \item The right zone is the \define{computation zone}: delimited by a wall with logarithmic speed (using an odometer), it uses Turing machines to computes both the $\omega_i$ and the times $T_i$ at which it should begin displaying them. (See section \ref{subsec:computations})
     \item The left zone is the \define{format zone}: it is also delimited by a wall using a odometer, but is shifted to the left by 1 each step. The odometer is a time counter which is used to compare the age of the construction (the number of steps since the last $*$ symbol) to the one of a computing zone it can encounter. (See section \ref{subsec:Collisions})
    \item When two constructions collide, only the younger one survives. In the rare case where they have the same age, only the left one survives (this choice is arbitrary). (See section \ref{subsec:Collisions})
    \item The alphabet $\B$ is composed of $\A$ and all the symbols used for the construction (e.g. walls and signals).
\end{itemize}

\begin{figure}[h]
    \centering
    \begin{minipage}[c]{0.60\linewidth}
    \def\svgwidth{\columnwidth}
\begingroup%
  \makeatletter%
  \providecommand\color[2][]{%
    \errmessage{(Inkscape) Color is used for the text in Inkscape, but the package 'color.sty' is not loaded}%
    \renewcommand\color[2][]{}%
  }%
  \providecommand\transparent[1]{%
    \errmessage{(Inkscape) Transparency is used (non-zero) for the text in Inkscape, but the package 'transparent.sty' is not loaded}%
    \renewcommand\transparent[1]{}%
  }%
  \providecommand\rotatebox[2]{#2}%
  \newcommand*\fsize{\dimexpr\f@size pt\relax}%
  \newcommand*\lineheight[1]{\fontsize{\fsize}{#1\fsize}\selectfont}%
  \ifx\svgwidth\undefined%
    \setlength{\unitlength}{653.95240123bp}%
    \ifx\svgscale\undefined%
      \relax%
    \else%
      \setlength{\unitlength}{\unitlength * \real{\svgscale}}%
    \fi%
  \else%
    \setlength{\unitlength}{\svgwidth}%
  \fi%
  \global\let\svgwidth\undefined%
  \global\let\svgscale\undefined%
  \makeatother%
  \begin{picture}(1,0.62016488)%
    \lineheight{1}%
    \setlength\tabcolsep{0pt}%
    \put(0,0){\includegraphics[width=\unitlength,page=1]{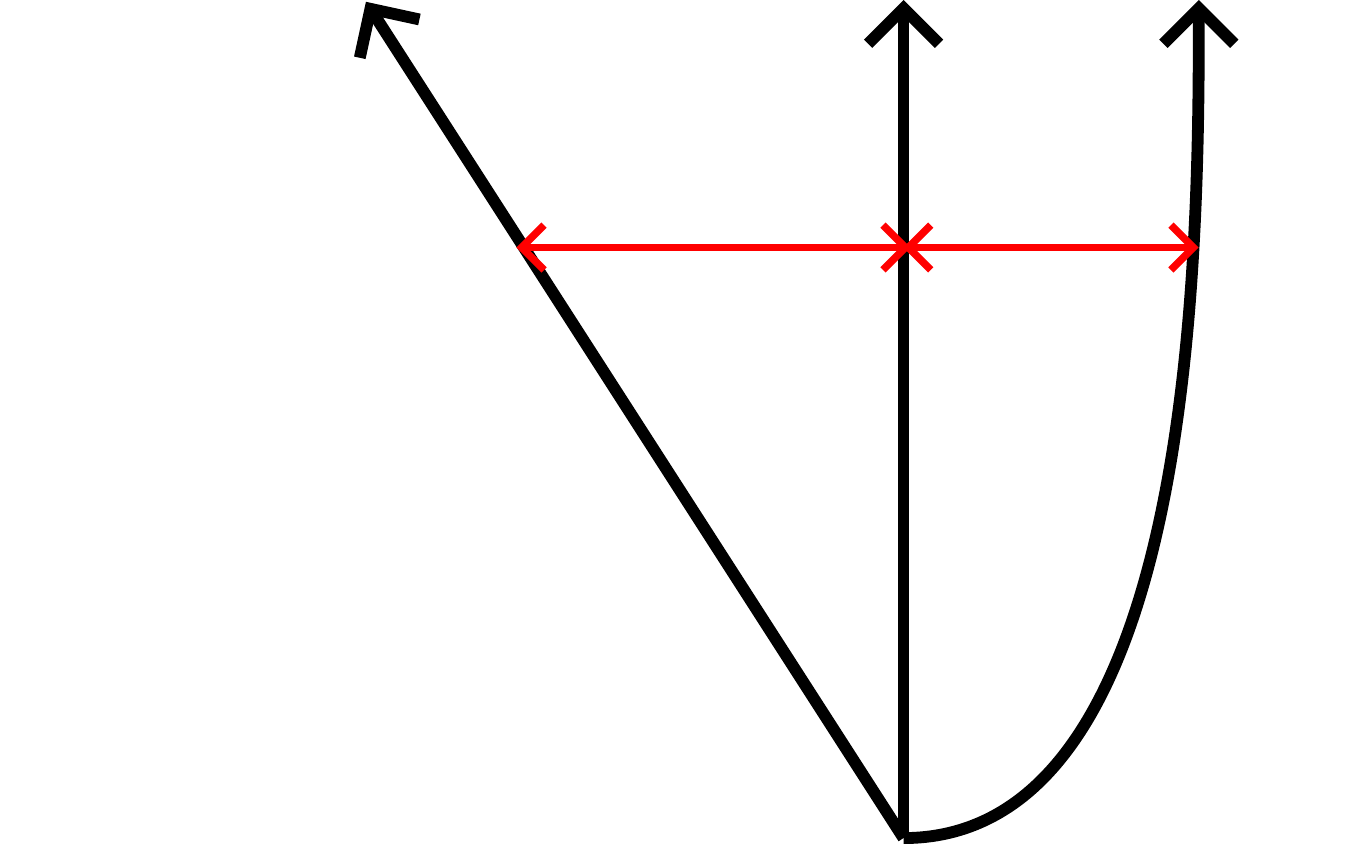}}%
    \put(0.4992409,0.51129198){\makebox(0,0)[t]{\lineheight{1.25}\smash{\begin{tabular}[t]{c}Display\\zone\end{tabular}}}}%
    \put(0.76867271,0.50966416){\makebox(0,0)[t]{\lineheight{1.25}\smash{\begin{tabular}[t]{c}Computation\\zone\end{tabular}}}}%
    \put(0.53219197,0.36543488){\color[rgb]{1,0,0}\makebox(0,0)[t]{\lineheight{1.25}\smash{\begin{tabular}[t]{c}$t$\end{tabular}}}}%
    \put(0.77241247,0.3687509){\color[rgb]{1,0,0}\makebox(0,0)[t]{\lineheight{1.25}\smash{\begin{tabular}[t]{c}$\log (t)$\end{tabular}}}}%
    \put(0,0){\includegraphics[width=\unitlength,page=2]{Construction2Schema1.pdf}}%
    \put(0.27732861,0.3687509){\color[rgb]{1,0,0}\makebox(0,0)[t]{\lineheight{1.25}\smash{\begin{tabular}[t]{c}$\log (t)$\end{tabular}}}}%
    \put(0.20638818,0.51129198){\makebox(0,0)[t]{\lineheight{1.25}\smash{\begin{tabular}[t]{c}Format\\zone\end{tabular}}}}%
  \end{picture}%
\endgroup%

    \end{minipage}
    \begin{minipage}[c]{0.39\linewidth}
    \def\svgwidth{\columnwidth}
\begingroup%
  \makeatletter%
  \providecommand\color[2][]{%
    \errmessage{(Inkscape) Color is used for the text in Inkscape, but the package 'color.sty' is not loaded}%
    \renewcommand\color[2][]{}%
  }%
  \providecommand\transparent[1]{%
    \errmessage{(Inkscape) Transparency is used (non-zero) for the text in Inkscape, but the package 'transparent.sty' is not loaded}%
    \renewcommand\transparent[1]{}%
  }%
  \providecommand\rotatebox[2]{#2}%
  \newcommand*\fsize{\dimexpr\f@size pt\relax}%
  \newcommand*\lineheight[1]{\fontsize{\fsize}{#1\fsize}\selectfont}%
  \ifx\svgwidth\undefined%
    \setlength{\unitlength}{370.37870632bp}%
    \ifx\svgscale\undefined%
      \relax%
    \else%
      \setlength{\unitlength}{\unitlength * \real{\svgscale}}%
    \fi%
  \else%
    \setlength{\unitlength}{\svgwidth}%
  \fi%
  \global\let\svgwidth\undefined%
  \global\let\svgscale\undefined%
  \makeatother%
  \begin{picture}(1,1.09427709)%
    \lineheight{1}%
    \setlength\tabcolsep{0pt}%
    \put(0,0){\includegraphics[width=\unitlength,page=1]{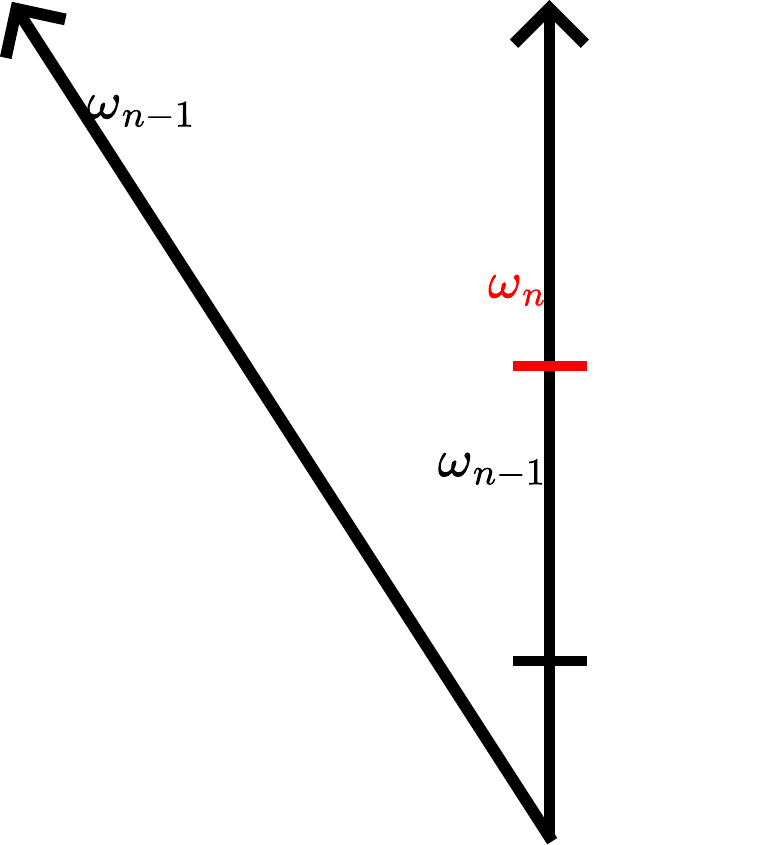}}%
    \put(0.84792148,0.60011628){\color[rgb]{1,0,0}\makebox(0,0)[t]{\lineheight{1.25}\smash{\begin{tabular}[t]{c}$T_n$\end{tabular}}}}%
    \put(0,0){\includegraphics[width=\unitlength,page=2]{Construction2Schema2.pdf}}%
    \put(0.84792148,0.20727486){\color[rgb]{0,0,0}\makebox(0,0)[t]{\lineheight{1.25}\smash{\begin{tabular}[t]{c}$T_{n-1}$\end{tabular}}}}%
    \put(0,0){\includegraphics[width=\unitlength,page=3]{Construction2Schema2.pdf}}%
  \end{picture}%
\endgroup%

    \end{minipage}
    \caption{Scheme of a space-time diagram the construction (with time going up). Left: a $*$ symbol creates walls that separates into three zones. After $t$ iterations, the central display zone is of linear size, while the other are of logarithmic size. Right: in the display zone, the word $\omega_n$ begins to be sent after $T_n$ iterations. All symbols from $\A$ are shifted along with the left wall.}
\end{figure}

For a random configuration following a given $F_\epsilon$-invariant measure, such a construction ``lives" for an average of $\frac{1}{2}\sqrt{\frac{|\B|}{\epsilon}}$, before being replaced by a new one as explained in Subsection~\ref{section:choseTn}. Thus $t$ iterations after a $*$ symbol, the display zone is of size $t$, while the computation zone is of size $2\log(t)$. In particular, the size of the computation zone is negligible in front of the size of the display zone. Moreover, with a good choice of $T_n$, the display zone will be mostly composed of the words $\omega_n$ and $\omega_{n+1}$.  

Therefore as $\epsilon$ goes to $0$, the invariant measures will be closer and closer to a convex combination of $\widehat{\delta_{\omega_{n}}}$ and $\widehat{\delta_{\omega_{n+1}}}$. Moreover, as $d\left(\widehat{\delta_{\omega_{n}}},\widehat{\delta_{\omega_{n+1}}}\right)\tendsto n{\infty}0$, it is equivalent to be close to $\widehat{\delta_{\omega_{n}}}$ (with larger and larger $n$), and thus have the same accumulation points. Note that this last condition implies that the set of accumulation points is connected; but by Proposition \ref{prop.MlConnected} $\Ml$ always is, so this is not a restriction.

\subsection{Action of the CA in the computation zone}\label{subsec:computations}

In the display zone, only symbols from $\A$ are created. The CA acts as the shift $\sigma$ on them: each iteration, they are moved 1 step to the left, along with the wall between the format zone and the display zone. The new symbol on the rightmost cell of the display zone is taken from the leftmost cell of the computation zone (see section \ref{subsec:copyingOmega}), through the wall symbol denoted by $W$.

In this section, we describe how the CA acts on the computation zone: how it computes the sequence of words $\omega_n$ and feeds them to the display zone, how the times $T_n$ at which they are sent are computed and why they were chosen.

\subsubsection{The choice of the times $T_n$}\label{section:choseTn}

For all integer $n$, $T_n$ denotes the time elapsed since a $*$ symbol at which the computation zone begins sending the word $\omega_n$ to the display zone. In this section, we describe how we can choose them.

Suppose a realization of a infinite space-time diagram is given. We want to study $Y$, the number of steps that have been done since a $*$ symbol appeared on the right side of the dependence cone of the cell at position $0$. We can naturally order the cells on this cone as in Figure \ref{fig:timeLastStar} (the cell at position $(i, -t)$ in space-time is assigned the place $i + \frac{t(t-1)}{2}$ for $1 \leq i \leq t$) and denote by $X$ the numbering of the first cell where a $*$ symbol occurs. By construction, $Y = |t| = \left\lceil \frac{-1+\sqrt{1+8X}}{2}\right\rceil$ ($Y = \lceil g^{-1}(X) \rceil$ for $g(x)\coloneqq\frac{x(x+1)}{2}$). Moreover, since the errors are independent and a $*$ symbol can only appear via an error, with a probability of $\frac{\epsilon}{|\B|}$ to appear on each cell, we have $X\sim\mathcal{G}(\frac{\epsilon}{|\B|})$. For clarity, we define $\boldepsilon = \frac{\epsilon}{|\B|}$ so that $X\sim\mathcal{G}(\boldepsilon)$.

\begin{figure}[h]
    \centering
    \includegraphics[scale=1]{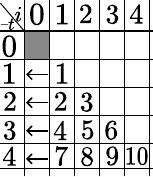}
    \caption{Ordering of the considered cells ($i>0$ and $-i \leq t < 0$), from left to right and top to bottom. The arrow indicates for each the number of steps $Y$ for a $*$ symbol at this row to reach the grayed cell in position $(0,0)$ in space-time.}
    \label{fig:timeLastStar}
\end{figure}

Note that for a given $I_\epsilon = [a_\epsilon; b_\epsilon]$ with $a_{\epsilon}^{2}=o\left(\frac{1}{\epsilon}\right)$ and $\frac{1}{\epsilon}=o\left(b_{\epsilon}^{2}\right)$, we have $P\left(Y\in I_{\epsilon}\right)\tendsto{\epsilon}{0}1$. Indeed,
\begin{align*}
    P\left(Y\in I_{\epsilon}\right) &= P\left(X\in[g(a_\epsilon); g(b_\epsilon)]\right) \\
    &= 1-P\left(X<g(a_\epsilon)\right) - P\left(X>g(b_\epsilon)\right) \\
    &= \left(1-\boldepsilon\right)^{g(a_\epsilon)} - \left(1-\boldepsilon\right)^{g(b_\epsilon)} \\
    &= \exp\left(g(a_\epsilon)\ln\left(1-\frac{\epsilon}{|\B|}\right) \right) - \exp\left(g(b_\epsilon)\ln\left(1-\frac{\epsilon}{|\B|}\right) \right)
\end{align*}
and $g(a_\epsilon)\ln(1-\frac{\epsilon}{|\B|})\sim -a_\epsilon^2\frac{\epsilon}{2|\B|}\tendsto{\epsilon}{0}0$ and $g(b_\epsilon)\ln(1-\frac{\epsilon}{|\B|})\tendsto{\epsilon}{0}-\infty$.

For a fixed constant $0<\eta\ll 1$, $a_{\epsilon}=\left(\frac{1}{\epsilon}\right)^{\frac{1}{2}-\eta}$ and $b_{\epsilon}=\left(\frac{1}{\epsilon}\right)^{\frac{1}{2}+\eta}$ verify this property. We can then define $T_{n}=2^{2^{n}}$ and $n_\epsilon \coloneqq \max\{n\in\N \mid T_n \leq a_{\epsilon} \}$. This choice is motivated by the following properties:
\begin{enumerate}
\item It is easily computable by a Turing machine, using the relation $T_{n+1}=T_{n}^{2}$; the space and time complexity are not an issue here (we have $T_{n+1} - T_{n}$ time steps and a space of size $2\log T_{n}$).
\item $\frac{T_{n}-T_{n-1}}{T_{n}}\tendsto n{\infty}1$: the size of the displayed zone computed in the first $n-1$ steps is negligible compared to one that will be displayed during the $n^{\text{th}}$ one.
\item At fixed $\epsilon>0$, one has $I_{\epsilon}\subset\left[T_{n_{\epsilon}};T_{n_{\epsilon}+2}\right]$. Indeed, we have $T_{n_{\epsilon}+1}>a_{\epsilon}$, and thus 
\[
b_{\epsilon}<T_{n_{\epsilon}+1}^{\frac{1/2+\eta}{1/2-\eta}}\leq T_{n_{\epsilon}+1}^{2}=T_{n_{\epsilon}+2}
\]
(we fix $\eta < \frac{1}{6}$ so that the middle inequality is true).
\end{enumerate}

We can suppose each term $\omega_n$ to be computable in time $T_n - T_{n-1}$ and in space $\log(T_{n-1})$. If it is not the case, we can just repeat the last word computed $\omega_{n-1}$ and try to compute $\omega_n$ again. In the end, we still have the same accumulation points of $\widehat{\delta_{\omega_n}}$, which are $\mathcal{K}$.

Therefore, for a fixed $\epsilon$, the last $*$ symbol occurred $t$ steps before, with $t\in I_{\epsilon}$ with high probability. Moreover, as  $I_{\epsilon}\subset\left[T_{n_{\epsilon}};T_{n_{\epsilon}+2}\right]$ and  $\frac{T_{n}-T_{n-1}}{T_{n}}\tendsto n{\infty}1$, it means that with high probability, we only see sub-words of $\omega_{n_{\epsilon}}$ and $\omega_{n_{\epsilon}+1}$ (more details in section \ref{subsec:calculs}). Thus any $F_\epsilon$-invariant probability $\pi_\epsilon$ will be close (for the distance $d_\M$) to measures with support on the periodic orbits associated to those words. $\pi_{\epsilon}$ will be closer and closer to $\text{Conv}\left(\widehat{\delta_{\omega_{n_{\epsilon}}}},\widehat{\delta_{\omega_{n_{\epsilon}+1}}}\right)$. More details in section \ref{subsec:conclusionPreuve}.

\subsubsection{Computing $T_n$} \hfill\\

We describe a system of counters and signals designed such that when (and only when) $T_{n}=2^{2^{n}}$ (for any $n\in\N^{*}$) steps have passed after a $*$ symbol, the cell at the right of the wall separating the display zone and the computation zone is in a specific state. It allows the computation to know when to switch from displaying $\omega_{n-1}$ to $\omega_{n}$. Apart from this result, the other properties will not be used in the other sections.

We consider three layers, each one with a different alphabet:
\begin{itemize}
\item $\A_{\text{count}}=\left\{ 0,1,2,\#\right\} $
\item $\A_{\text{signal}}=\left\{ S_{1},S_{2},\#\right\} $
\item $\A_{\text{mines}}=\left\{ T,M,C,\#\right\} $
\end{itemize}
where $\#$ is a neutral symbol. Let us describe the action of the CA on each layer.

On the first layer the CA acts by iterating an odometer in base 2 with a carry that propagates (in our case, to the right). One can see a precise description in Section 4.2.2 of \cite{HS18}, and the first steps are illustrated in Figure \ref{fig:compteurs}. The first cell, next to the wall, is added 1 each step, and thus is alternating between ``$1$" and ``$2$". Each time a carry ``$2$" reaches the last numbered cell, it creates a new cell $\left(1,S_{1},\#\right)$ to the right.

The second layer consists of signals that propagates at speed 1 towards the center, but only if the two consecutive cells left to them forms the pattern ``$20$" (formally, it propagates through the ``$1$"s, but see the carry in advance). A signal $S_{1}$ becomes a $S_{2}$ if it encounters a mine $M$. When a signal arrives close to the center (the cells left to it are $W1$), it disappears and creates left to it :
\begin{itemize}
\item a clearer $C$ if it is a signal $S_{2}$.
\item a trapper $T$ if it is a signal $S_{1}$. This event is the one marking a step $T_{n}=2^{2^{n}}$.
\end{itemize}
Finally, the third layer consists of:
\begin{itemize}
\item a trapper signal $T$ that follows the carry ``$2$"  to the right, leaving a mine $M$ on its path (except on the cell next to the wall). It disappears when the carry creates a new digit. 
\item mines $M$ created by a $T$ signal, that stay in place until a clearer arrives. 
\item a clearer signal $C$ that also follows the carry, but disappears when encountering a mine. 
\end{itemize}
Figure \ref{fig:compteurs} illustrates the first 16 steps on the computation zone after a $*$ symbol.

\begin{figure}

\begin{centering}
\begin{tabular}{|c|c|c|c|c|c|c|c|c|c|c|c|}
\hline 
\multirow{2}{*}{$t=16$} & \multirow{2}{*}{$W$} & \multicolumn{2}{c|}{$\mathbf{2}$} & \multicolumn{2}{c|}{$\mathbf{1}$} & \multicolumn{2}{c|}{$\mathbf{1}$} & \multicolumn{2}{c|}{$\mathbf{1}$} & \multicolumn{2}{c|}{}\tabularnewline
\cline{3-12} \cline{4-12} \cline{5-12} \cline{6-12} \cline{7-12} \cline{8-12} \cline{9-12} \cline{10-12} \cline{11-12} \cline{12-12} 
 &  &  & \textcolor{purple}{$T$} &  &  &  &  &  &  &  & \tabularnewline
\hline 
\multirow{2}{*}{$t=15$} & \multirow{2}{*}{$W$} & \multicolumn{2}{c|}{$\mathbf{1}$} & \multicolumn{2}{c|}{$\mathbf{1}$} & \multicolumn{2}{c|}{$\mathbf{1}$} & \multicolumn{2}{c|}{$\mathbf{1}$} & \multicolumn{2}{c|}{}\tabularnewline
\cline{3-12} \cline{4-12} \cline{5-12} \cline{6-12} \cline{7-12} \cline{8-12} \cline{9-12} \cline{10-12} \cline{11-12} \cline{12-12} 
 &  &  &  & \textcolor{red}{$S_{1}$} &  &  &  &  &  &  & \tabularnewline
\hline 
\multirow{2}{*}{$t=14$} & \multirow{2}{*}{$W$} & \multicolumn{2}{c|}{$\mathbf{2}$} & \multicolumn{2}{c|}{$\mathbf{0}$} & \multicolumn{2}{c|}{$\mathbf{1}$} & \multicolumn{2}{c|}{$\mathbf{1}$} & \multicolumn{2}{c|}{}\tabularnewline
\cline{3-12} \cline{4-12} \cline{5-12} \cline{6-12} \cline{7-12} \cline{8-12} \cline{9-12} \cline{10-12} \cline{11-12} \cline{12-12} 
 &  &  &  &  &  & \textcolor{red}{$S_{1}$} &  &  &  &  & \tabularnewline
\hline 
\multirow{2}{*}{$t=13$} & \multirow{2}{*}{$W$} & \multicolumn{2}{c|}{$\mathbf{1}$} & \multicolumn{2}{c|}{$\mathbf{2}$} & \multicolumn{2}{c|}{$\mathbf{0}$} & \multicolumn{2}{c|}{$\mathbf{1}$} & \multicolumn{2}{c|}{}\tabularnewline
\cline{3-12} \cline{4-12} \cline{5-12} \cline{6-12} \cline{7-12} \cline{8-12} \cline{9-12} \cline{10-12} \cline{11-12} \cline{12-12} 
 &  &  &  &  &  &  &  & \textcolor{red}{$S_{1}$} &  &  & \tabularnewline
\hline 
\multirow{2}{*}{$t=12$} & \multirow{2}{*}{$W$} & \multicolumn{2}{c|}{$\mathbf{2}$} & \multicolumn{2}{c|}{$\mathbf{1}$} & \multicolumn{2}{c|}{$\mathbf{0}$} & \multicolumn{2}{c|}{$\mathbf{1}$} & \multicolumn{2}{c|}{}\tabularnewline
\cline{3-12} \cline{4-12} \cline{5-12} \cline{6-12} \cline{7-12} \cline{8-12} \cline{9-12} \cline{10-12} \cline{11-12} \cline{12-12} 
 &  &  &  &  &  &  &  & \textcolor{red}{$S_{1}$} &  &  & \tabularnewline
\hline 
\multirow{2}{*}{$t=11$} & \multirow{2}{*}{$W$} & \multicolumn{2}{c|}{$\mathbf{1}$} & \multicolumn{2}{c|}{$\mathbf{1}$} & \multicolumn{2}{c|}{$\mathbf{0}$} & \multicolumn{2}{c|}{$\mathbf{1}$} & \multicolumn{2}{c|}{}\tabularnewline
\cline{3-12} \cline{4-12} \cline{5-12} \cline{6-12} \cline{7-12} \cline{8-12} \cline{9-12} \cline{10-12} \cline{11-12} \cline{12-12} 
 &  &  &  &  &  &  &  & \textcolor{red}{$S_{1}$} &  &  & \tabularnewline
\hline 
\multirow{2}{*}{$t=10$} & \multirow{2}{*}{$W$} & \multicolumn{2}{c|}{$\mathbf{2}$} & \multicolumn{2}{c|}{$\mathbf{0}$} & \multicolumn{2}{c|}{$\mathbf{2}$} & \multicolumn{2}{c|}{} & \multicolumn{2}{c|}{}\tabularnewline
\cline{3-12} \cline{4-12} \cline{5-12} \cline{6-12} \cline{7-12} \cline{8-12} \cline{9-12} \cline{10-12} \cline{11-12} \cline{12-12} 
 &  &  &  &  &  &  &  &  &  &  & \tabularnewline
\hline 
\multirow{2}{*}{$t=9$} & \multirow{2}{*}{$W$} & \multicolumn{2}{c|}{$\mathbf{1}$} & \multicolumn{2}{c|}{$\mathbf{2}$} & \multicolumn{2}{c|}{$\mathbf{1}$} & \multicolumn{2}{c|}{} & \multicolumn{2}{c|}{}\tabularnewline
\cline{3-12} \cline{4-12} \cline{5-12} \cline{6-12} \cline{7-12} \cline{8-12} \cline{9-12} \cline{10-12} \cline{11-12} \cline{12-12} 
 &  &  &  &  &  &  &  &  &  &  & \tabularnewline
\hline 
\multirow{2}{*}{$t=8$} & \multirow{2}{*}{$W$} & \multicolumn{2}{c|}{$\mathbf{2}$} & \multicolumn{2}{c|}{$\mathbf{1}$} & \multicolumn{2}{c|}{$\mathbf{1}$} & \multicolumn{2}{c|}{} & \multicolumn{2}{c|}{}\tabularnewline
\cline{3-12} \cline{4-12} \cline{5-12} \cline{6-12} \cline{7-12} \cline{8-12} \cline{9-12} \cline{10-12} \cline{11-12} \cline{12-12} 
 &  &  & \textcolor{blue}{$C$} &  & \textcolor{brown}{$M$} &  &  &  &  &  & \tabularnewline
\hline 
\multirow{2}{*}{$t=7$} & \multirow{2}{*}{$W$} & \multicolumn{2}{c|}{$\mathbf{1}$} & \multicolumn{2}{c|}{$\mathbf{1}$} & \multicolumn{2}{c|}{$\mathbf{1}$} & \multicolumn{2}{c|}{} & \multicolumn{2}{c|}{}\tabularnewline
\cline{3-12} \cline{4-12} \cline{5-12} \cline{6-12} \cline{7-12} \cline{8-12} \cline{9-12} \cline{10-12} \cline{11-12} \cline{12-12} 
 &  &  &  & \textcolor{brown}{$S_{2}$} & \textcolor{brown}{$M$} &  &  &  &  &  & \tabularnewline
\hline 
\multirow{2}{*}{$t=6$} & \multirow{2}{*}{$W$} & \multicolumn{2}{c|}{$\mathbf{2}$} & \multicolumn{2}{c|}{$\mathbf{0}$} & \multicolumn{2}{c|}{$\mathbf{1}$} & \multicolumn{2}{c|}{} & \multicolumn{2}{c|}{}\tabularnewline
\cline{3-12} \cline{4-12} \cline{5-12} \cline{6-12} \cline{7-12} \cline{8-12} \cline{9-12} \cline{10-12} \cline{11-12} \cline{12-12} 
 &  &  &  &  & \textcolor{brown}{$M$} & \textcolor{red}{$S_{1}$} &  &  &  &  & \tabularnewline
\hline 
\multirow{2}{*}{$t=5$} & \multirow{2}{*}{$W$} & \multicolumn{2}{c|}{$\mathbf{1}$} & \multicolumn{2}{c|}{$\mathbf{2}$} & \multicolumn{2}{c|}{} & \multicolumn{2}{c|}{} & \multicolumn{2}{c|}{}\tabularnewline
\cline{3-12} \cline{4-12} \cline{5-12} \cline{6-12} \cline{7-12} \cline{8-12} \cline{9-12} \cline{10-12} \cline{11-12} \cline{12-12} 
 &  &  &  &  & \textcolor{purple}{$T$} &  &  &  &  &  & \tabularnewline
\hline 
\multirow{2}{*}{$t=4$} & \multirow{2}{*}{$W$} & \multicolumn{2}{c|}{$\mathbf{2}$} & \multicolumn{2}{c|}{$\mathbf{1}$} & \multicolumn{2}{c|}{} & \multicolumn{2}{c|}{} & \multicolumn{2}{c|}{}\tabularnewline
\cline{3-12} \cline{4-12} \cline{5-12} \cline{6-12} \cline{7-12} \cline{8-12} \cline{9-12} \cline{10-12} \cline{11-12} \cline{12-12} 
 &  &  & \textcolor{purple}{$T$} &  &  &  &  &  &  &  & \tabularnewline
\hline 
\multirow{2}{*}{$t=3$} & \multirow{2}{*}{$W$} & \multicolumn{2}{c|}{$\mathbf{1}$} & \multicolumn{2}{c|}{$\mathbf{1}$} & \multicolumn{2}{c|}{} & \multicolumn{2}{c|}{} & \multicolumn{2}{c|}{}\tabularnewline
\cline{3-12} \cline{4-12} \cline{5-12} \cline{6-12} \cline{7-12} \cline{8-12} \cline{9-12} \cline{10-12} \cline{11-12} \cline{12-12} 
 &  &  &  & \textcolor{red}{$S_{1}$} &  &  &  &  &  &  & \tabularnewline
\hline 
\multirow{2}{*}{$t=2$} & \multirow{2}{*}{$W$} & \multicolumn{2}{c|}{$\mathbf{2}$} & \multicolumn{2}{c|}{} & \multicolumn{2}{c|}{} & \multicolumn{2}{c|}{} & \multicolumn{2}{c|}{}\tabularnewline
\cline{3-12} \cline{4-12} \cline{5-12} \cline{6-12} \cline{7-12} \cline{8-12} \cline{9-12} \cline{10-12} \cline{11-12} \cline{12-12} 
 &  &  &  &  &  &  &  &  &  &  & \tabularnewline
\hline 
\multirow{2}{*}{$t=1$} & \multirow{2}{*}{$W$} & \multicolumn{2}{c|}{$\mathbf{1}$} & \multicolumn{2}{c|}{$\#$} & \multicolumn{2}{c|}{$\#$} & \multicolumn{2}{c|}{$\#$} & \multicolumn{2}{c|}{$\A_{\text{count}}$}\tabularnewline
\cline{3-12} \cline{4-12} \cline{5-12} \cline{6-12} \cline{7-12} \cline{8-12} \cline{9-12} \cline{10-12} \cline{11-12} \cline{12-12} 
 &  & $\#$ & $\#$ & $\#$ & $\#$ & $\#$ & $\#$ & $\#$ & $\#$ & $\A_{\text{signal}}$ & $\A_{\text{mines}}$\tabularnewline
\hline 
\end{tabular}\caption{The first 16 steps. The $\#$ symbols are omitted after the first step. A trapper signal $T$ is produced on $T_1=4$ and $T_2=16$.}\label{fig:compteurs}
\par\end{centering}
\end{figure}

Let us first observe the following: in the counter, there cannot be two successive ``$2$" symbols. Indeed, if such a configuration existed, by going back in time there would have been a configuration of the form \begin{tabular}{|c|c|c|c|c|}
\hline 
$W$ & $2$ & $2$ & $\omega_{1}$ & $\cdots$\tabularnewline
\hline 
\end{tabular}, with $\omega\in\A_{\text{count}}^{*}$, but such a configuration has no predecessor.

Let us prove by induction the following property $\mathcal{P}\left(n\right)$ for $n\in\N^{*}$: between $T_{n}$ and $T_{n+1}-1$, a trapper $T$ is created only at $T_{n}$ , and at $T_{n+1}-1$ steps, the counter is of the form \newline
\begin{tabular}{|c|c|c|c|c|c|c|c|c|c|c|c|}
\hline 
\multirow{2}{*}{$W$} & \multicolumn{2}{c|}{$1$} & \multicolumn{2}{c|}{$1$} & \multicolumn{2}{c|}{$1$} &  & \multicolumn{2}{c|}{$1$} & \multicolumn{2}{c|}{$1$}\tabularnewline
\cline{2-12} \cline{3-12} \cline{4-12} \cline{5-12} \cline{6-12} \cline{7-12} \cline{8-12} \cline{9-12} \cline{10-12} \cline{11-12} \cline{12-12} 
 & $\#$ & $\#$ & $S_{1}$ & $\#$ & $\#$ & $\#$ & $\cdots$ & $\#$ & $\#$ & $\#$ & $\#$\tabularnewline
\hline 
\end{tabular}
with $2^{n}$ ones.

We know that $\mathcal{P}\left(1\right)$ is true (see Figure \ref{fig:compteurs}). Suppose that it is true for $n-1$, and let us prove $\mathcal{P}\left(n\right)$.
\begin{enumerate}
\item By making a step on the local configuration, we know that a trapper $T$ is created on the first cell at $T_{n}$, and the counter is
\begin{tabular}{|c|c|c|c|c|c|}
\hline 
$W$ & $2$ & $1$ & $\cdots$ & $1$ & $\#$\tabularnewline
\hline 
\end{tabular}.
\item The $T$ (along with the carry) will propagate all the way to the right and leave a mine $m$ where there were ``$1$"s before. Because there is no carry immediately after, the configuration is now of the form
\begin{center}
\begin{tabular}{|c|c|c|c|c|c|c|c|c|c|c|c|c|}
\hline 
\multirow{2}{*}{$W$} & \multicolumn{2}{c|}{$u$} & \multicolumn{2}{c|}{$\omega_{1}$} & $\cdots$ & \multicolumn{2}{c|}{$\omega_{2^{n}-2}$} & \multicolumn{2}{c|}{$0$} & \multicolumn{2}{c|}{$1$} & $\#$\tabularnewline
\cline{2-13} \cline{3-13} \cline{4-13} \cline{5-13} \cline{6-13} \cline{7-13} \cline{8-13} \cline{9-13} \cline{10-13} \cline{11-13} \cline{12-13} \cline{13-13} 
 & $\#$ & $\#$ & $\#$ & $C$ & $\cdots$ & $\#$ & $C$ & $\#$ & $C$ & $S_{1}$ & $\#$ & \tabularnewline
\hline 
\end{tabular} 
\par\end{center}
with $u\in\left\{ 1,2\right\} $, $\omega=\left(\omega_{i}\right)\in\left\{ 0,1,2\right\} ^{2^{n}-2}$, a signal $S_{1}$ on the final ``$1$" and mines on all the cells of $\omega$ and the final ``$0$". There is $2^{n}-1$ mines in the counter. 
\item By a direct induction using the first lemma, there can be only the symbol ``$1$" on the first layer right of a $S$ signal and a ``$0$" immediately on its left, unless it is next to the wall. 
\item With this property and the first lemma, a signal $S$ arrives to the center on the configuration 
\begin{center}
\begin{tabular}{|c|c|c|c|c|c|c|c|}
\hline 
\multirow{2}{*}{$W$} & $1$ & $2$ & $0$ & $1$ & $1$ & $\cdots$ & $1$\tabularnewline
\cline{2-8} \cline{3-8} \cline{4-8} \cline{5-8} \cline{6-8} \cline{7-8} \cline{8-8} 
 & $\#$ & $\#$ & $\#$ & $S_{i}$ & $\#$ & $\cdots$ & $\#$\tabularnewline
\hline 
\end{tabular} 
\par\end{center}
then
\begin{center}
\begin{tabular}{|c|c|c|c|c|c|c|c|}
\hline 
\multirow{2}{*}{$W$} & $2$ & $0$ & $1$ & $1$ & $1$ & $\cdots$ & $1$\tabularnewline
\cline{2-8} \cline{3-8} \cline{4-8} \cline{5-8} \cline{6-8} \cline{7-8} \cline{8-8} 
 & $\#$ & $\#$ & $S_{i}$ & $\#$ & $\#$ & $\cdots$ & $\#$\tabularnewline
\hline 
\end{tabular} 
\par\end{center}
so finally a signal $S$ creates a $C$ or a $T$ signal after the next step, when the counter is of the form
\begin{center}
\begin{tabular}{|c|c|c|c|c|c|c|c|c|c|c|c|}
\hline 
\multirow{2}{*}{$W$} & \multicolumn{2}{c|}{$1$} & \multicolumn{2}{c|}{$1$} & \multicolumn{2}{c|}{$1$} & $\cdots$ & \multicolumn{2}{c|}{$1$} & \multicolumn{2}{c|}{$1$}\tabularnewline
\cline{2-12} \cline{3-12} \cline{4-12} \cline{5-12} \cline{6-12} \cline{7-12} \cline{8-12} \cline{9-12} \cline{10-12} \cline{11-12} \cline{12-12} 
 & $\#$ & $\#$ & $S_{i}$ & ? & $\#$ & ? & $\cdots$ & $\#$ & ? & $\#$ & $\#$\tabularnewline
\hline 
\end{tabular} 
\par\end{center}
If we denote by $k$ the number of ``$1$", this configuration occurs at time $\sum_{j=0}^{k-1}1\cdot2^{j}=2^{k}-1$, and the new $C$ or $T$ signal is created after $2^{k}$ steps. In particular, there is one signal $S$ arriving per power of $2$. 
\item Because there were initially $2^{n}-1$ mines in the counter, all the signals $S$ arriving at $2^{2^{n}+1}$, $2^{2^{n}+2}$, ..., $2^{2^{n}+2^{n}-1}$ encounter a mine, so they are $S_{2}$. They each create one $C$ signal, that clears one $M$ mine (before the next $S$ signal is created). Thus, the path is mine-free for the next signal arriving at $T_{n+1}$, and thus the configuration at $T_{n+1}-1$ is 
\begin{center}
\begin{tabular}{|c|c|c|c|c|c|c|c|c|c|c|c|}
\hline 
\multirow{2}{*}{$W$} & \multicolumn{2}{c|}{$1$} & \multicolumn{2}{c|}{$1$} & \multicolumn{2}{c|}{$1$} &  & \multicolumn{2}{c|}{$1$} & \multicolumn{2}{c|}{$1$}\tabularnewline
\cline{2-12} \cline{3-12} \cline{4-12} \cline{5-12} \cline{6-12} \cline{7-12} \cline{8-12} \cline{9-12} \cline{10-12} \cline{11-12} \cline{12-12} 
 & $\#$ & $\#$ & $S_{1}$ & $\#$ & $\#$ & $\#$ & $\cdots$ & $\#$ & $\#$ & $\#$ & $\#$\tabularnewline
\hline 
\end{tabular}
\par\end{center}
with $2^{n}$ symbols ``$1$".
\end{enumerate}

\subsubsection{Computing of $\omega_n$ at time $T_{n-1}$}

The computation of each $\omega_n$ is done by a Turing Machine (TM) which is a formalization of algorithms, see for example \cite{Turing36,Odifreddi92,MP22}. In our context, a TM is defined by a 5-uplet $(Q, q_0, q_f, \Gamma,\phi)$ where
\begin{itemize}
	\item $Q$ is the finite set of states of the head.
	\item $q_0 \in Q$ is the initial state of the head.
	\item $q_f \in Q$ is the final state of the head.
	\item $\Gamma$ is the working alphabet, containing a blank symbol $B$. The result of the computation is written using symbols from $\Gamma$, and so in our case $\A \subset \Gamma$.
	\item $\phi : Q \times \Gamma \to Q \times \Gamma \times \{\leftarrow, \rightarrow\} $ is the transition function.
\end{itemize}

A TM consists of an head in a state $q$ and placed on a cell of an infinite string indexed by $\N$. At each iteration, the head reads the symbol $\gamma$ written on its cell, and acts accordingly to $\phi(q,\gamma) = (q^\prime, \gamma^\prime, x)$: the head replace the symbol $\gamma$ by $\gamma^\prime$ on its cell, updates its state to $q^\prime$ and takes a single step in the $x$ direction. It begins initially in the state $q_0$ at position $0$ and iterates until it reaches state $q_f$.

A Turing Machine takes a word $u\in(\Gamma\backslash \{B\})^*$ as an input if it is written initially on the string in the cells $0$ to $|u|-1$. The sequence $(\omega_n)$ is said to be computed by the TM if, given the input $n$, the TM attains state $q_f$ in a finite amount of iterations with only the word $\omega_n$ written on the tape, at the leftmost positions.

The implementation in a CA is natural: the cells of the grid corresponds with the cells of the TM, and are filled with symbols from $\Gamma \cup \Gamma \times Q$, where the only cell with a symbol from $\Gamma \times Q$ represents the head of the machine. The cellular automaton acts as the identity on the cells with symbols from $\Gamma$ and follows the transition rule $\phi$ by changing the symbol and moving the head.

In our case, the TM is simulated in the computation zone, which is not infinite. But as each $\omega_n$ is computed in a finite amount of time, the TM can only moves a finite amount of steps to the right: as the size of the computation zone is $\log(t)$ after $t$ iterations and thus is ultimately enough. Moreover, we can suppose each $\omega_n$ to be computed in less space than $\log(T_{n-1})$ and time less than $T_n - T_{n-1}$.

\subsubsection{Copying the $\omega_n$}\label{subsec:copyingOmega}
In order to display one by one the symbols of $\omega_n$ we use loops inside the computing zone.

\begin{figure}[h]
    \centering
    \includegraphics[scale= 0.5]{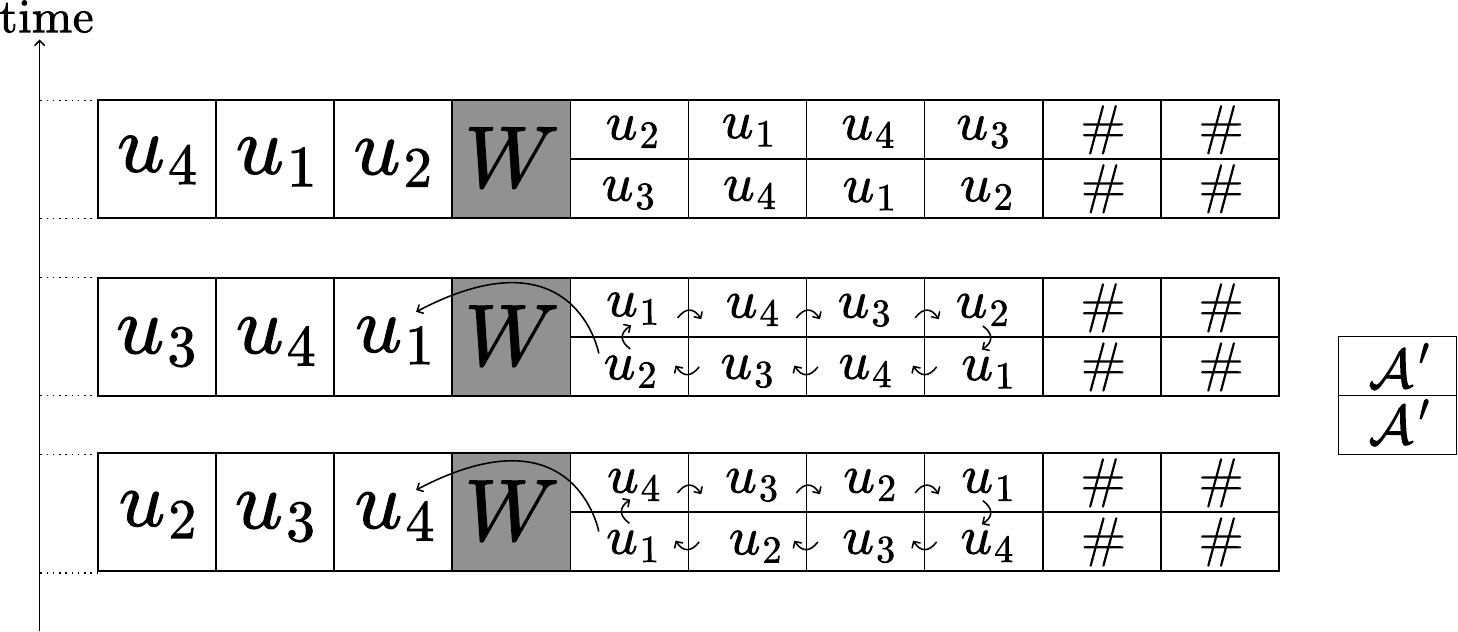}
    \caption{The displaying process. Left to the wall $W$ is the displaying zone, where the symbols are shifted to the left each step. Right to the wall is the computation zone, where the word $\omega = u_1 u_2 u_3 u_4$ is written twice. At each step, the symbols are shifted accordingly so that it forms a loop.}
\end{figure}

In order to have a smooth transition from $\omega_n$ to $\omega_{n+1}$, we consider two independent loops: when one is turning and sending its symbols to the display zone, the other one is used to compute the next word. The change of loops only occurs when the second loop is ready: the next word is computed and the loop is turning. 

To go into more details, the two loops are represented by two identical layers: $\A_1$ and $\A_2$. Each is composed by four layers: 
\[
\A_i = \A^\prime \times \A^\prime \times \A_\text{computations} \times \{0,1\} 
\]
where $\A^\prime = \A \sqcup \{\#\}$ and $\A$ is the base alphabet. The words $\omega_n$ are written using the symbols of $\A$, and the $\#$ symbol represent the end of the word. The final layer $\{0,1\}$ indicates if the loop iterate or not.

The construction of a $\omega_n$ on $\A_1$ follows the following steps: assume that initially, $t=T_{n-1}$ and the word $\omega_{n-2}$ is written on the first layer $\A^\prime$ of $\A_1$. At the end of each step, a signal is sent to the cell against the wall to continue to the next step. All signals and computations are hidden in the $\A_\text{computations}$ layer, which itself can be decomposed into different layers.
\begin{enumerate}
	\item A signal from the wall is sent away to switch the final layer to $0$, stopping the cycle. It also erase the symbols of $\omega_{n-2}$ on the way, until it reaches a $\#$. The same signal adds $2$ to the input in $\A_\text{computations}$ so that $n$ is written.
	\item A Turing machine takes $n$ as an input and computes $\omega_n$ on the first $\A^\prime$ layer, as detailed in the previous section.
	\item A Turing machine copies it on the second $\A^\prime$ layer in the reverse order.
	\item All the cells of $\A_1$ before a $\#$ symbol have their final layer switched to $1$: the loop begins to turn.
\end{enumerate}

The last step must be done simultaneously on all the cells before the $\#$ symbol so that all symbols begins to be shifted at the same time: if they are not synchronized information could be lost. Moreover, the length of the loop (the length of $\omega_n$) is not known in advance. The problem of activating simultaneously an arbitrary number of cells with a cellular automaton is known as the \define{firing squad synchronization problem} and has already been solved and optimized (see for example \cite{Mazoyer87}). We can implement such a solution within the $\A_\text{computations}$ layer.

After the construction, the cell against the wall is switch to a ``ready" state. It indicates that when the next $T_{n}$ occurs, we can switch from $\A_2$ to $\A_1$ (or vice-versa), at which point it goes back a ``waiting" state, until the next word is computed. The wall itself has another layer $\{1,2\}$ that simply indicates whether the symbol going through it should be from layer $\A_1$ or from layer $\A_2$. It switches from one value to the other when $t=T_n$, notified by the signals from the previous subsection, if the cell next to it is in a ``ready" state.

\subsection{Collisions}\label{subsec:Collisions}

When an error occurs and produces a $*$ symbol, walls are created and erase everything else on their path. The main rule the CA follows is: when two walls collide only the youngest one survives, while the oldest is gradually erased.

This idea is implemented and precisely described in \cite{HS18}: each wall has a counter with it counting its age. When two collides, the comparison is made bit by bit, until one its found to be younger than the other. As the counters are binary they take only a logarithmic amount of space, but during the comparison the wall is stopped. In \cite{ENT23}, the authors use signals and the distance between them to represent the age of each wall and make the comparisons with faster signals. Here the wall is not stopped during the comparison, but the counters take a linear amount of space (they behave like a unary counter).

In order to simplify further computations, we will mix those solutions to have a comparison method that is both quick enough that the wall does not stop and using a binary counter so that the space taken is only logarithmic.

\begin{figure}[h]
	\centering
	\begin{subfigure}[b]{0.49\textwidth}
		\centering
		\includegraphics[width=\textwidth]{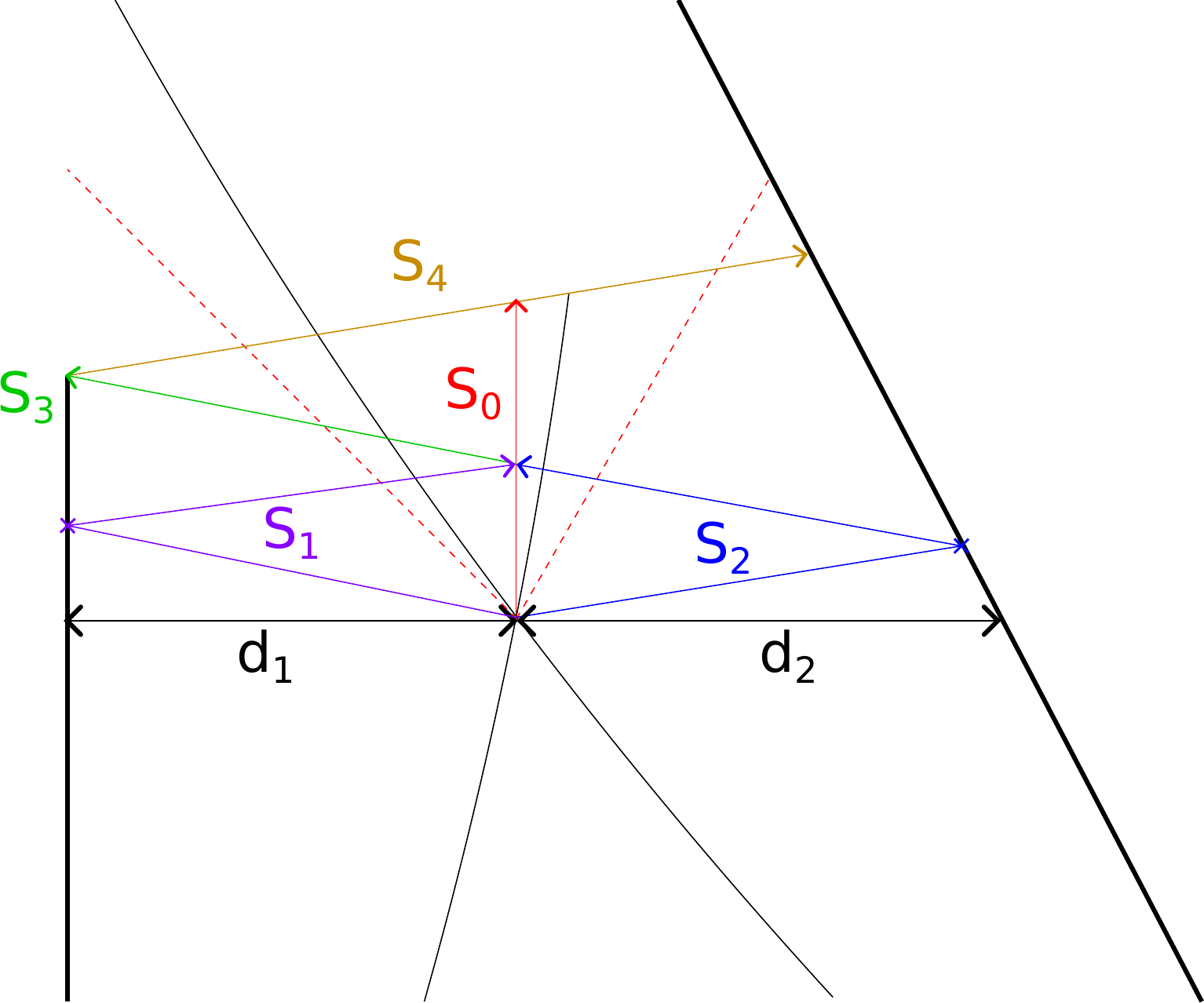}
		\caption{Comparison when the right wall is younger.}
	\end{subfigure}
	\hfill
	\begin{subfigure}[b]{0.49\textwidth}
		\centering
		\includegraphics[width=\textwidth]{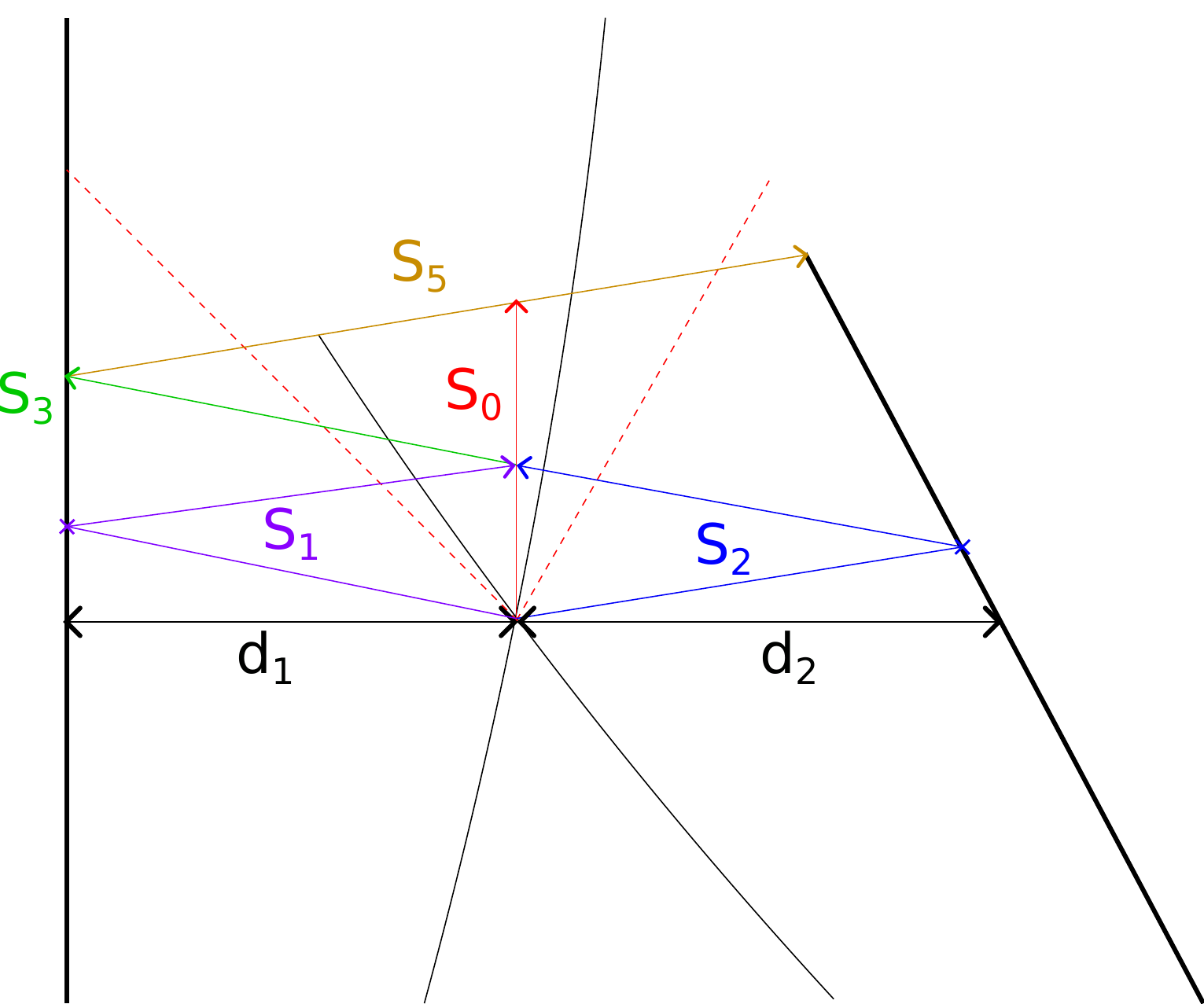}
		\caption{Comparison when the left wall is younger.}
	\end{subfigure}
	\caption{The successive signals created during a comparison when the sizes of the counters ($d_1$ and $d_2$) are equal. The dotted red lines are the theoretical maximum speed at which each counter can go. The comparison must be complete before one intersect the opposite wall.}
\end{figure}

The comparison begins when two counters collide. The comparison in itself can be decomposed in two. First, signals are created such that the length of each counter is compared: if one is strictly longer, it is therefore older and can be erased. Then, if they are the same length, a bit-by-bit comparison is made. In the following, the signals $S_i$ are going at speed $v_i \geq 1$. 

\subsubsection{Collision}
\begin{itemize}
	\item A signal $S_0$ is created (with speed $v_0 = 0$), indicating the position of the collision of the two counters.
	\item At the same time, signals $S_1$ and $S_2$ are created, going in opposite directions. They bounce off the wall they encounter.
	\item When $S_2$ is passing through the right counter, the latter is copied and sent to the left counter. The shifting speed is the same as $S_2$, $v_2$.
	\item When the copied counter arrive to the left, the bit-by-bit comparison begins (in case the length of the walls are identical), still as speed $v_2$.
	\item When the signals come back to $S_0$, two cases can occur:
	\begin{itemize}
		\item If one of them came back before the other, it creates destruction signals (respectively $S^d_{1,l}$, $S^d_{1,r}$ and $S^d_{2,l}$, $S^d_{2,r}$) that destroys the other counter, computation zone and wall, $S_0$ and the copied counter. Two signals are sent (one left and one right) as the collision location is inside both computation zones. The collision is finished.
		\item If both signals came back at the same time, the counters were the same length: the comparison begins. For this to happen when counters are the same length we need $v_2 = v_1-1$. 
	\end{itemize}
\end{itemize}

\subsubsection{Comparison}
\begin{itemize}
	\item A signal $S_3$ is created going to the left. On its way it continues the bit-by-bit comparison between the left counter and the copied one. This operation is described in details in \cite{HS18} ; the only difference here is that at each steps $v_3$ comparisons are made, instead of 1.
	\item When it reaches the left wall, it can be in 3 states:
	\begin{itemize}
		\item ``$+$" (the left wall is older) or ``$=$ (same age): a destruction signal $S_4$ is created moving to the right, destroying the left construction (like $S^d_{2,r}$).
		\item ``$-$" (the right wall is older): a destruction signal $S_5$ is created moving to the right, destroying the right construction (like $S^d_{1,r}$).
	\end{itemize}
\end{itemize}

All the comparison takes place inside counters and computation zones, so we can add as many layers as we want: for the different signals, for the copied counter, for the bit-by-bit comparison, etc. In order for the comparison and removal to be made before any proper wall encounter a counter, we can take $v_1=v_3=v_4=v_5=10$ and $v_2=9$.

One fact to observe is that if that because the comparison is processed in two steps, if we suppose that there is no errors (due to noise) in a zone of size $3\log t$ after a counter, then a construction can only be erased by a younger one: when encountering an older wall with a counter of the same length, all computations are made inside the error-less zone, and when encountering an older wall with a longer counter, the signals $S^d_{i,l/r}$ erase the latter before any rogue signal can arrive to erase the youngest construction.

\subsubsection{Format counter}

In order for the comparison to take place outside the display zone, the format counter have to be outside it (in the format zone). However, the bit-by-bit comparison requires the two counters to be in the same direction: in our case, the units is on the leftmost cell. In this situation, the counter size increases when the rightmost digit is a $2$ (a carry). On the format zone, it cannot increases on the right, as there is the display zone. Instead, we use a system of a second layer which only purpose is to act as an auxiliary to move the counter one cell to the left, increasing the format zone without touching at the display zone.

To go into more details: the second layer uses the alphabet $\{0,1,2,-\}$ where $-$ is just a blank symbol. In a normal state, all cells have the blank symbol. If the rightmost digit is a $2$, the new digit is created on the second layer on the same cell. Then, at each iteration, the non $-$ symbols on the second layer are copied on the first layer, while the symbols there are copied on the second layer on the cell at their left. If the cell at their left is a well, it is shifted and the symbol is instead copied on the first layer, completing the process.Figure \ref{fig:formatCounter} illustrate the mechanism. 

The speed of the comparison signals can easily be tweaked to take into account the non-blank second layers as ``double cells".

\begin{figure}[h]

\begin{centering}
\begin{tabular}{|c|c|c|c|c|c|c|c|}
\hline 
\multirow{2}{*}{$t=14$} & \multirow{2}{*}{$W$} & $2$ & \multicolumn{1}{c|}{$0$} & \multicolumn{1}{c|}{$1$} & \multicolumn{1}{c|}{$1$} & \multirow{2}{*}{$W$} & \multirow{2}{*}{$\rightarrow2011$}\tabularnewline
\cline{3-6} \cline{4-6} \cline{5-6} \cline{6-6} 
 &  & $-$ & $-$ & $-$ & $-$ &  & \tabularnewline
\hline 
\multirow{2}{*}{$t=13$} & \multirow{2}{*}{} & \multirow{2}{*}{$W$} & \multicolumn{1}{c|}{$1$} & \multicolumn{1}{c|}{$0$} & \multicolumn{1}{c|}{$1$} & \multirow{2}{*}{$W$} & \multirow{2}{*}{$\rightarrow1201$}\tabularnewline
\cline{4-6} \cline{5-6} \cline{6-6} 
 &  &  & $2$ & $-$ & $-$ &  & \tabularnewline
\hline 
\multirow{2}{*}{$t=12$} & \multirow{2}{*}{} & \multirow{2}{*}{$W$} & \multicolumn{1}{c|}{$2$} & \multicolumn{1}{c|}{$1$} & \multicolumn{1}{c|}{$1$} & \multirow{2}{*}{$W$} & \multirow{2}{*}{$\rightarrow2101$}\tabularnewline
\cline{4-6} \cline{5-6} \cline{6-6} 
 &  &  & $-$ & $0$ & $-$ &  & \tabularnewline
\hline 
\multirow{2}{*}{$t=11$} & \multirow{2}{*}{} & \multirow{2}{*}{$W$} & \multicolumn{1}{c|}{$1$} & \multicolumn{1}{c|}{$1$} & \multicolumn{1}{c|}{$0$} & \multirow{2}{*}{$W$} & \multirow{2}{*}{$\rightarrow1101$}\tabularnewline
\cline{4-6} \cline{5-6} \cline{6-6} 
 &  &  & $-$ & $-$ & $1$ &  & \tabularnewline
\hline 
\multirow{2}{*}{$t=10$} & \multirow{2}{*}{} & \multirow{2}{*}{$W$} & \multicolumn{1}{c|}{$2$} & \multicolumn{1}{c|}{$0$} & \multicolumn{1}{c|}{$2$} & \multirow{2}{*}{$W$} & \multirow{2}{*}{$\rightarrow202$}\tabularnewline
\cline{4-6} \cline{5-6} \cline{6-6} 
 &  &  & $-$ & $-$ & $-$ &  & \tabularnewline
\hline 
\end{tabular}\caption{\label{fig:formatCounter} Some steps of the format counter. On the left the time (in decimal), and on the right the corresponding representation in binary with carry. One can read it on the counter top to bottom and left to right, ignoring the $-$ symbols. The shift to the left at each step is not represented.}
\par\end{centering}
\end{figure}

\subsection{Errors}\label{subsec:Errors}
If an error occur in the computation zone, it will certainly be fatal for the computation. However, the logarithmic size of the computation zone is small enough in comparison to the linear size of the display zone that those errors can be neglected. The same apply for the format zone.

For the errors in the display zone, we can isolate them using a neutral symbol $\#$. Define $\A^\prime\coloneqq\A\cup\{\#\}$ as the \define{inactive} symbols and $\B\backslash\A^\prime$ the active symbols, such that if an active symbol is \define{isolated}, that is to say if it is the only active symbol in his direct neighborhood, it is shifted and becomes $\#$ at the next iteration. That way, you need at least two simultaneous errors for the display zone to break.

\subsection{Approximating the invariant measure(s)}\label{subsec:calculs}
In this section, we first show that the probability of seeing sub-words of $\omega_{n_\epsilon}$ or $\omega_{n_\epsilon+1}$ tends to $1$ when $\epsilon$ is small. We then show that this implies that the distance between $\M_\epsilon$ and $\widehat{\delta_{\omega_{n_\epsilon}}}$ tends to $0$.

For any $\epsilon>0$ we denote by $\pi_\epsilon$ a $F_\epsilon$-invariant measure, and define $\left(X^t\right)_{t\in\Z}=\left(\left(X^t_i\right)_{i\in\Z}\right)_{t\in\Z}$ a stationary orbit associated: for any $t\in\Z$ and observable $E$, $P(X^{t+1}\in E) = F_\epsilon (X^t,E)$ and $X^t\sim \pi_\epsilon$.

For any pattern $\omega = X^0_{-|\omega|+1}\cdots X^0_0$, for it to be a sub-word of $\omega_{n_\epsilon}$ or $\omega_{n_\epsilon+1}$, it suffices that:
\begin{itemize}
	\item[\textbf{Event $A_\epsilon$:}] The last $*$ symbol in the positive coordinates was between $a_\epsilon$ and $b_\epsilon$ prior steps, so that it is in a display zone; we denote it by $*_0$ in the following.
	\item[\textbf{Event $B_\epsilon$:}] There was no errors in the negative coordinates nor above the line $y=-x$ in the last $b_\epsilon$ steps, so that it is not in the computation zone or a format counter of a younger construction.
	\item[\textbf{Event $C_\epsilon$:}]There is no errors in the computation and format zones of $*_0$  with some margin ($3\log t$ each side), so that each term of the sequence $(\omega_n)$ was correctly computed and there was no comparison error on the way.
	\item[\textbf{Event $D_\epsilon$:}] There was no error in the diagonal $(X^{-i}_j)_{0\leq i\leq b_\epsilon, i-|\omega|\leq j\leq i }$, so that there is no errors in the display zone that was shifted. 
	\item[\textbf{Event $E_\epsilon$:}] There was no double-errors in the display zone, so that there is no possible "degenerate construction" that does not come from a $*$ symbol.
	\item[\textbf{Event $F_\epsilon$:}] If $(i,t)\in\Z^2$ denotes the position in space-time of $*_0$, then it should not be part of these zones:
	\begin{itemize}
		\item $\{ i \geq -t - T_{n_\epsilon} \}$, so that the pattern at $0$ is not composed by $\omega_n$ with $n < n_\epsilon$;
		\item $\{ t > - 2a_\epsilon\}$ or $\{ -T_{n_\epsilon +1} < t < - (T_{n_\epsilon +1} + a_\epsilon) \text{ and } i < -t - T_{n_\epsilon+1} \}$, so that we can suppose the pattern at $0$ to be chosen almost uniformly on the sequence ${}^\infty \omega_{n_\epsilon}^\infty$ or ${}^\infty \omega_{n_\epsilon +1 }^\infty$, and not just on the first symbols.
	\end{itemize}
\end{itemize}

\begin{figure}[h]
    \centering
    \includegraphics[scale=0.7]{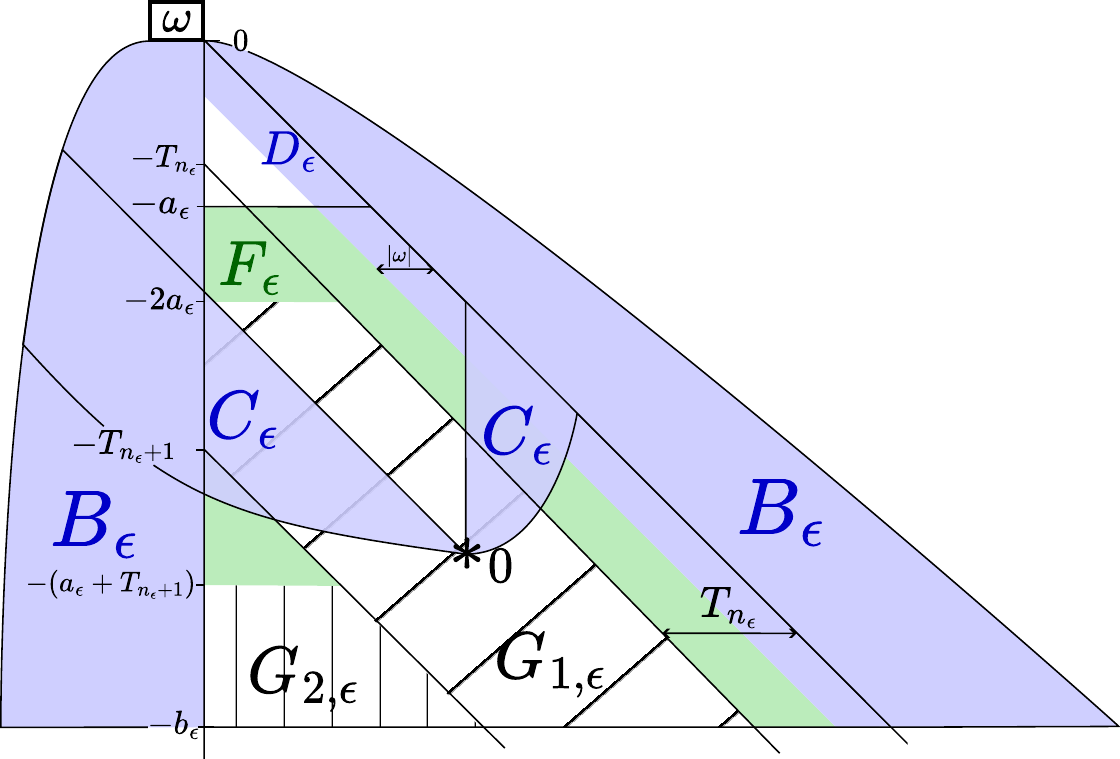}
    \caption{Graphical interpretation of the different events. There is no error in the blue areas ($B_\epsilon, C_\epsilon$ and $D_\epsilon$). $E_\epsilon$ is the absence of double errors on the whole shape. The last $*$ symbol does not appear in the green areas (event $F_\epsilon$), only in the white areas between $a_{\epsilon}$ and $b_{\epsilon}$: if it is in $G_{1,\epsilon}$ (slanted stripes), we have a pattern of $\omega_{n_\epsilon}$ at $0$, if it is in $G_{2,\epsilon}$ (vertical stripes) it will be a pattern of $\omega_{n_\epsilon+1}$.}
\end{figure}

When those $6$ events are verified, denote by $(i,t)\in\Z^2$ the position in space-time of $*_0$. We need to show that the intersection of those events tends to 1 when $\epsilon$ is small. Conditioned on their intersection, we know that $t\in [-b_\epsilon,-a_\epsilon]\subset [-T_{n_\epsilon+2}, -T_{n_\epsilon}]$ and $i\in[0,-t-T_{n_\epsilon-1}]$. Define the two cases $G_{1,\epsilon} = \{ i > -t -T_{n_\epsilon+1} \}$ and $G_{2,\epsilon} = \{i \leq -t -T_{n_\epsilon+1} \}$ such that $A_\epsilon\cap B_\epsilon\cap C_\epsilon\cap D_\epsilon\cap E_\epsilon\cap F_\epsilon = G_{1,\epsilon}\sqcup G_{2,\epsilon}$, and 
\[
p_{\epsilon}^{i}\coloneqq P\left(G_{i,\epsilon}\mid G_{1,\epsilon}\sqcup G_{2,\epsilon}\right).
\]

\begin{rem}
As there can be $\epsilon>0$ such that $G_{2,\epsilon}=\emptyset$, we have to be careful with the conditional probabilities.
\end{rem}
\begin{prop}\label{prop.convergencePiepsilon}
Let $\mu_{\epsilon}\coloneqq p_{\epsilon}^{1}\widehat{\delta_{\omega_{n_{\epsilon}}}}+p_{\epsilon}^{2}\widehat{\delta_{\omega_{n_{\epsilon}+1}}}$
(convex combination). Then 
\[
d_\M\left(\pi_{\epsilon},\mu_{\epsilon}\right)\tendsto{\epsilon}00.
\]
\end{prop}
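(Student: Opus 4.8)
The plan is to argue entirely at the level of cylinder probabilities and to reduce everything to the single event $G_{1,\epsilon}\sqcup G_{2,\epsilon}$. Since $d_\M(\pi_\epsilon,\mu_\epsilon)=\sum_{n\in\N}2^{-n}\norm{\pi_\epsilon-\mu_\epsilon}{I_n}$ with $\norm{\cdot}{I_n}\leq 1$, it suffices to prove that $\norm{\pi_\epsilon-\mu_\epsilon}{I_n}\tendsto\epsilon0 0$ for each fixed $n$: splitting the sum at an arbitrary rank $N$ bounds the tail by $2^{-N}$ and leaves a finite head of terms each going to $0$, so $\limsup_{\epsilon\to0}d_\M(\pi_\epsilon,\mu_\epsilon)\leq 2^{-N}$ for every $N$. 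As each $\norm{\cdot}{I_n}$ involves finitely many cylinders, the whole statement reduces to showing, for every fixed pattern $\omega$ (read on the cells $-|\omega|+1,\dots,0$, using shift-invariance to place the window), that $\bigl|\pi_\epsilon([\omega])-\mu_\epsilon([\omega])\bigr|\tendsto\epsilon00$.

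First I would establish that $P\bigl(A_\epsilon\cap B_\epsilon\cap C_\epsilon\cap D_\epsilon\cap E_\epsilon\cap F_\epsilon\bigr)=P\bigl(G_{1,\epsilon}\sqcup G_{2,\epsilon}\bigr)\tendsto\epsilon01$, by a union bound over the complements. Event $A_\epsilon$ is handled by the estimate $P(Y\in I_\epsilon)\tendsto\epsilon01$ already carried out. The events $B_\epsilon,C_\epsilon,D_\epsilon$ are ``no error'' events on regions of area $o(1/\epsilon)$: the computation and format zones with their margins and the shifted diagonal of the display all have logarithmic or bounded width over a lifetime at most $b_\epsilon$, hence area $O(b_\epsilon\log b_\epsilon)=o(1/\epsilon)$ because $b_\epsilon=(1/\epsilon)^{1/2+\eta}$ with $\eta<\tfrac12$; since the probability of no error on $N$ cells is $(1-\epsilon)^N\geq1-\epsilon N$, each of these probabilities tends to $1$. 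For $E_\epsilon$ the display zone has area $O(b_\epsilon^2)$ but only two \emph{adjacent} errors are forbidden, so the expected number of such pairs is $O(\epsilon^2 b_\epsilon^2)=O(\epsilon^{1-2\eta})\tendsto\epsilon00$. Finally $F_\epsilon$ is positional: the age $t_0=Y$ typically is of order $\sqrt{1/\epsilon}$, far above $2a_\epsilon$ and with $T_{n_\epsilon}\leq a_\epsilon\ll t_0$, while the abscissa $i_0$ of $*_0$ is essentially uniform on $\{0,\dots,t_0\}$; hence the excluded positional zones (those forcing the pattern to come from $\omega_n$ with $n<n_\epsilon$, or from near the first symbols) have conditional probability $o(1)$.

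On $G_{1,\epsilon}$ the cell at $0$ is the symbol fed to the display zone at age $\sigma=t_0-i_0\in[T_{n_\epsilon},T_{n_\epsilon+1})$, hence a letter of ${}^\infty\omega_{n_\epsilon}^\infty$ at offset $(\sigma-T_{n_\epsilon})\bmod|\omega_{n_\epsilon}|$, and similarly on $G_{2,\epsilon}$ with $\omega_{n_\epsilon+1}$. Because $i_0$ is nearly uniform, $\sigma$ ranges nearly uniformly over an interval of length of order $T_{n_\epsilon+1}-T_{n_\epsilon}$, which by property (2) of the $T_n$ is an enormous multiple of the period $|\omega_{n_\epsilon}|$; the role of the second part of $F_\epsilon$ is precisely to discard the boundary sub-range so that the conditional offset is uniform over a whole number of periods up to a $o(1)$ error. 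Consequently, for the fixed window $\omega$, one gets $P\bigl(X^0=\omega\mid G_{1,\epsilon}\bigr)=\widehat{\delta_{\omega_{n_\epsilon}}}([\omega])+o(1)$ and $P\bigl(X^0=\omega\mid G_{2,\epsilon}\bigr)=\widehat{\delta_{\omega_{n_\epsilon+1}}}([\omega])+o(1)$, uniformly in $\omega$. Writing
\[
\pi_\epsilon([\omega])=P\bigl(\{X^0=\omega\}\cap(G_{1,\epsilon}\sqcup G_{2,\epsilon})\bigr)+P\bigl(\{X^0=\omega\}\cap(G_{1,\epsilon}\sqcup G_{2,\epsilon})^c\bigr),
\]
the last term is at most $P\bigl((G_{1,\epsilon}\sqcup G_{2,\epsilon})^c\bigr)\to0$, while the first equals $P(G_{1,\epsilon})\,P(X^0=\omega\mid G_{1,\epsilon})+P(G_{2,\epsilon})\,P(X^0=\omega\mid G_{2,\epsilon})$; using $p_\epsilon^i=P(G_{i,\epsilon})/P(G_{1,\epsilon}\sqcup G_{2,\epsilon})$ together with $P(G_{1,\epsilon}\sqcup G_{2,\epsilon})\to1$ this equals $\mu_\epsilon([\omega])+o(1)$. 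Hence $\pi_\epsilon([\omega])-\mu_\epsilon([\omega])\to0$, which is what the reduction requires.

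The main obstacle is the second and third steps: making the geometric region estimates fully rigorous, that is, checking that every zone whose error-freeness is needed genuinely has area $o(1/\epsilon)$ (this is exactly where the exponents $1/2\pm\eta$ and the logarithmic sizes of the computation and format zones are forced), and controlling the conditional law of the offset $(\sigma-T_{n_\epsilon})\bmod|\omega_{n_\epsilon}|$ to obtain the claimed near-uniformity. One must also treat carefully the degenerate case $G_{2,\epsilon}=\emptyset$ flagged in the remark, where the conditional probabilities defining $p_\epsilon^2$ are vacuous and the whole mass sits on $G_{1,\epsilon}$.
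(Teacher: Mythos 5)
Your proposal follows essentially the same route as the paper's proof: the same six events with $P(G_{1,\epsilon}\sqcup G_{2,\epsilon})\to 1$ established by the same area/union-bound estimates, the same conditional analysis giving $P(\omega=u\mid G_{i,\epsilon})\approx\widehat{\delta_{\omega_{n_\epsilon+i-1}}}([u])$ via near-uniformity of the position of $*_0$, and the identical final decomposition of $\pi_\epsilon([u])-\mu_\epsilon([u])$ into three vanishing terms (plus the same flagged degenerate case $G_{2,\epsilon}=\emptyset$). The steps you list as ``main obstacles'' --- the precise bound on $P(\overline{F_\epsilon})$, the interface/boundary events, and the proof that the truncated geometric law of the offset is close to uniform --- are exactly the computations the paper carries out, so your plan is a faithful outline of its argument.
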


\subsubsection{The zone where $*_0$ appears}

In this section, we show that
\[
P\left(G_{1,\epsilon}\sqcup G_{2,\epsilon}\right)\tendsto{\epsilon}01.
\]

By the choice of $a_{\epsilon}=\left(\frac{1}{\epsilon}\right)^{\frac{1}{2}-\eta}$ and $b_{\epsilon}=\left(\frac{1}{\epsilon}\right)^{\frac{1}{2}+\eta}$, we already have $P(A_\epsilon)\tendsto{\epsilon}01$. For the other ones:
\begin{itemize}
\item $P(B_\epsilon)\geq(1-\epsilon)^{b_{\epsilon}(2\log(b_{\epsilon})+|\omega|)}=\exp{(b_{\epsilon}(2\log(b_{\epsilon})+|\omega|)\log(1-\epsilon))}\tendsto{\epsilon}01$. 
\item $P(C_\epsilon\mid A_\epsilon)\geq(1-\epsilon)^{6b_{\epsilon}\log(b_{\epsilon})}\tendsto{\epsilon}01$.
\item $P(D_\epsilon)\geq(1-\epsilon)^{b_{\epsilon}|\omega|}\tendsto{\epsilon}01$.
\item The two errors must be simultaneous and close to each other, so the computation boils down to $P(E_\epsilon)\geq(1-\epsilon^{2})^{b_{\epsilon}^{2}}\tendsto{\epsilon}01$.
\item For $F_\epsilon$, we use the following equivalences (recall that the probability of having a $*$ symbol on a given cell is $\boldepsilon \coloneqq \frac{\epsilon}{|\B|}$):
\[
1-\left(1-\boldepsilon\right)^{2a_{\epsilon}} \underset{\epsilon\to0}{\sim} -2a_{\epsilon}\ln\left(1-\boldepsilon\right) \underset{\epsilon\to0}{\sim} 2a_{\epsilon}\boldepsilon = \frac{2}{|\mathcal{B}|}\epsilon^{\frac{1}{2}+\eta}
\]
and
\[
\sum_{s=0}^{\infty}(1-\boldepsilon)^{s^{2}/2} = \sum_{s=0}^{\infty}\sqrt{1-\boldepsilon}^{s^{2}} \underset{\epsilon\to0}{\sim} \frac{C}{\sqrt{1-\sqrt{1-\boldepsilon}}} \underset{\epsilon\to0}{\sim} \frac{C^{\prime}}{\sqrt{\boldepsilon}}
\]
to bound 
\begin{align*}
P\left(\overline{F_\epsilon}\right) & = \sum_{s=a_{\epsilon}}^{2a_{\epsilon}} (1-\boldepsilon)^{s^{2}/2}\sum_{t=1}^{s}(1-\boldepsilon)^{t}\boldepsilon + \sum_{s=2a_{\epsilon}}^{b_{\epsilon}} (1-\boldepsilon)^{s^{2}/2+s-T_{n_{\epsilon}}}\sum_{t=1}^{T_{n_{\epsilon}}}(1-\boldepsilon)^{t}\boldepsilon \\
    & \qquad\qquad +\sum_{s=T_{n_{\epsilon}+1}}^{a_{\epsilon}+T_{n_{\epsilon}+1}} (1-\boldepsilon)^{s^{2}/2}\sum_{t=1}^{s-T_{n_{\epsilon}+1}}(1-\boldepsilon)^{t}\boldepsilon \\
	& \leq 3\left(\sum_{s=0}^{+\infty}(1-\boldepsilon)^{s^{2}/2}\right)\left(1-(1-\boldepsilon)^{2a_{\epsilon}}\right) \\
	& \underset{\epsilon\to0}{\sim} \frac{6C^{\prime}}{\sqrt{|\mathcal{B}|}}\epsilon^{\eta}\tendsto{\epsilon}00.
\end{align*}
\end{itemize}

\subsubsection{$G_{1,\epsilon}$}
For the two following sections, suppose $u\in\mathcal{B^{*}}$ is a fixed pattern. In this section, we show that
\[
\left|P(\omega=u \mid G_{1,\epsilon})-\widehat{\delta_{\omega_{n_{\epsilon}}}}([u])\right|\tendsto{\epsilon}00.
\]

We can decompose $G_{1,\epsilon}$ as $G_{1,\epsilon}=\widetilde{G_{1,\epsilon}}\sqcup I$ where $I$ is the event where the symbols displayed at $t=0$ on $\omega$ are a mix between the ones of $\omega_{n_{\epsilon-1}}$ and $\omega_{n_{\epsilon}}$; this happens when $*_0$ (with coordinate $(i,t)\in\Z^2$) is in an interface of size $\left|u\right|$ along the line $x+y=-T_{n_{\epsilon}}$. Given $G_{1,\epsilon}$ and $|t| = s$, this corresponds to the event $\{i > s - T_{n_\epsilon} - |u|\}$. Using
\[
l(s) \coloneqq \min\left(s,T_{n_{\epsilon}+1}\right)-\left(T_{n_{\epsilon}}+\left|u\right|\right)
\]
the length of $\widetilde{G_{1,\epsilon}}$ at $|t| = s$ and $\boldepsilon = \frac{\epsilon}{|\B|}$ the probability of having a $*$ symbol in a given cell, we can compute $P\left(I\mid G_{1,\epsilon}\cap|t|=s\right) =  \frac{(1-\boldepsilon)^{l(s)}\left(1-(1-\boldepsilon)^{\left|u\right|}\right)}{1-(1-\boldepsilon)^{l(s) + \left|u\right|}}$: the probability that when a $*$ appears in a zone of size $l(s)+|u|$, it only appears in the last $|u|$ cells. Therefore,
\begin{align*}
P\left(I\mid G_{1,\epsilon}\right) & =\sum_{s=2a_{\epsilon}}^{b_{\epsilon}} P\left(I\mid G_{1,\epsilon}\cap|t|=s\right)P\left(|t|=s\mid G_{1,\epsilon}\right)\\
 & \leq\sum_{s}\frac{(1-\boldepsilon)^{l(s)}\left(1-(1-\boldepsilon)^{\left|u\right|}\right)}{1-(1-\boldepsilon)^{l(s) + \left|u\right|}}P\left(|t|=s\mid G_{1,\epsilon}\right)\\
 & \leq\frac{1-(1-\boldepsilon)^{\left|u\right|}}{1-(1-\boldepsilon)^{a_{\epsilon}}}\tendsto{\epsilon}00.
\end{align*}

Therefore, we only have to demonstrate that 
\[
\left|P\left(\omega=u\mid\widetilde{G_{1,\epsilon}}\right)-\widehat{\delta_{\omega_{n_{\epsilon}}}}([u])\right|\tendsto{\epsilon}00.
\]

We can replace $\widehat{\delta_{\omega_{n_{\epsilon}}}}([u])$ by the following approximation.
\begin{prop}
Define $J_{n}^{N}=\left\{ 1\leq j\leq N\mid\left(\omega_{n}\right)_{\left\llbracket N-j+1;N-j+\left|\omega\right|\right\rrbracket }^{\infty}=u\right\} $ and $l(s)=\min\left(s,T_{n_{\epsilon}+1}\right)-\left(T_{n_{\epsilon}}+\left|u\right|\right)$.
Then
\[
\left|\sum_{s=2a_{\epsilon}}^{b_{\epsilon}}\frac{\left|J_{n_{\epsilon}}^{l(s)}\right|}{l(s)}P\left(|t|=s\mid\widetilde{G_{1,\epsilon}}\right)-\widehat{\delta_{\omega_{n_{\epsilon}}}}([u])\right|\tendsto{\epsilon}00.
\]
\end{prop}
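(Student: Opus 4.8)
The plan is to read the summand $\frac{\left|J_{n_{\epsilon}}^{l(s)}\right|}{l(s)}$ as the \emph{empirical frequency} of the pattern $u$ in a window of length $l(s)$ of the periodic configuration $\omega_{n_{\epsilon}}^{\infty}$, and to compare it with the exact frequency $\widehat{\delta_{\omega_{n_{\epsilon}}}}([u])$. Since the weights $P\left(|t|=s\mid\widetilde{G_{1,\epsilon}}\right)$ are conditional probabilities and therefore sum to $1$ over $s$, the whole sum is a convex combination of such frequencies; thus it suffices to control the error of the worst single window, uniformly in $s\in[2a_{\epsilon},b_{\epsilon}]$.

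First I would record the elementary periodicity estimate. Writing $N=q\left|\omega_{n_{\epsilon}}\right|+r$ with $0\leq r<\left|\omega_{n_{\epsilon}}\right|$ and counting occurrences of $u$ period by period, the only discrepancy comes from the incomplete last period, which yields $\left|\frac{\left|J_{n_{\epsilon}}^{N}\right|}{N}-\widehat{\delta_{\omega_{n_{\epsilon}}}}([u])\right|\leq\frac{2\left|\omega_{n_{\epsilon}}\right|}{N}$. Applying this with $N=l(s)$ reduces the proposition to proving $\sup_{2a_{\epsilon}\leq s\leq b_{\epsilon}}\frac{\left|\omega_{n_{\epsilon}}\right|}{l(s)}\tendsto{\epsilon}0 0$.

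Next I would bound the two quantities in this ratio. For the denominator, use $T_{n_{\epsilon}}\leq a_{\epsilon}$ from the definition of $n_{\epsilon}$ in Section~\ref{section:choseTn}. When $s\leq T_{n_{\epsilon}+1}$ we have $l(s)=s-T_{n_{\epsilon}}-|u|\geq 2a_{\epsilon}-a_{\epsilon}-|u|=a_{\epsilon}-|u|$; when $s>T_{n_{\epsilon}+1}$ we have $l(s)=T_{n_{\epsilon}+1}-T_{n_{\epsilon}}-|u|$, and the relation $T_{n_{\epsilon}+1}=T_{n_{\epsilon}}^{2}$ together with $T_{n_{\epsilon}+1}>a_{\epsilon}$ gives $l(s)\geq\frac{1}{2}T_{n_{\epsilon}+1}-|u|\geq\frac{1}{2}a_{\epsilon}-|u|$. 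Hence $l(s)\gtrsim a_{\epsilon}$ uniformly in $s$. For the numerator, invoke the space bound from Section~\ref{subsec:computations}: each $\omega_{n}$ is computed in space at most $\log(T_{n-1})$, so $\left|\omega_{n_{\epsilon}}\right|\leq\log(T_{n_{\epsilon}-1})=\frac{1}{2}\log(T_{n_{\epsilon}})\leq\frac{1}{2}\log(a_{\epsilon})$, which is $O(\log a_{\epsilon})$.

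Combining these bounds, $\sup_{s}\frac{\left|\omega_{n_{\epsilon}}\right|}{l(s)}=O\!\left(\frac{\log a_{\epsilon}}{a_{\epsilon}}\right)\tendsto{\epsilon}0 0$, since $a_{\epsilon}=\left(\frac{1}{\epsilon}\right)^{1/2-\eta}\to\infty$. Plugging the periodicity estimate into the sum and using $\sum_{s}P\left(|t|=s\mid\widetilde{G_{1,\epsilon}}\right)=1$ then gives $\left|\sum_{s}\frac{\left|J_{n_{\epsilon}}^{l(s)}\right|}{l(s)}P\left(|t|=s\mid\widetilde{G_{1,\epsilon}}\right)-\widehat{\delta_{\omega_{n_{\epsilon}}}}([u])\right|\leq 2\sup_{s}\frac{\left|\omega_{n_{\epsilon}}\right|}{l(s)}\tendsto{\epsilon}0 0$, which is the claim. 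The only genuinely delicate point is the uniformity in $s$: one must verify the lower bound on $l(s)$ in both regimes of the $\min$ (in particular at the boundary $s=2a_{\epsilon}$), so that no summand spoils the convex combination. The periodicity estimate and the averaging step are otherwise routine.
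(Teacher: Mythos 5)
Your argument is correct and follows essentially the same route as the paper's proof: the empirical-frequency bound $\left|\frac{|J_{n_\epsilon}^{l}|}{l}-\widehat{\delta_{\omega_{n_\epsilon}}}([u])\right|\lesssim\frac{|\omega_{n_\epsilon}|}{l}$, the convex-combination step using $\sum_s P(|t|=s\mid\widetilde{G_{1,\epsilon}})=1$, and the estimates $|\omega_{n_\epsilon}|=O(\log a_\epsilon)$ (from the space bound) together with $l(s)\gtrsim a_\epsilon$ uniformly in $s$. Your treatment is in fact slightly more careful than the paper's, which simply writes $l(s)\geq l(2a_\epsilon)\geq a_\epsilon$ by monotonicity, whereas you verify the lower bound separately in both regimes of the $\min$; this is a cosmetic refinement, not a different method.
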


\begin{rem}
For $j\in J_n^N$, $N-j+1$ is a rank where the pattern $u$ appear in $\omega_n$, in the first $N$ terms. Given $\widetilde{G_{1,\epsilon}}$ and $|t|=s$, $*_0$ must be at a position $j\in J_n^{l(s)}$ away from the left border of $G_{1,\epsilon}$ to have the pattern $u$ in $\omega$ at time $0$.
\end{rem}

\begin{proof}
As we compute $\omega_{n_{\epsilon}}$ in $T_{n_{\epsilon}}$ steps, we have $\left|\omega_{n_{\epsilon}}\right|\leq\ln\left(T_{n_{\epsilon}}\right)\leq\ln\left(a_{\epsilon}\right)$, therefore $\left|\omega_{n_{\epsilon}}\right|=o\left(a_{\epsilon}\right)$. As $\frac{\left|J_{n_{\epsilon}}^{l}\right|}{l}$ corresponds to the empirical measure of the frequency of pattern $u$ in $\omega_{n_{\epsilon}}^{\infty}$ in the first $l$ symbols, we have the approximation 
\[
\left|\frac{\left|J_{n_{\epsilon}}^{l}\right|}{l}-\widehat{\delta_{\omega_{n_{\epsilon}}}}([u])\right|\leq\frac{\left|\omega_{n_{\epsilon}}\right|}{l}.
\]
Thus 
\begin{align*}
\left|\underset{\text{convex combination}}{\underbrace{\sum_{s=2a_{\epsilon}}^{b_{\epsilon}}\frac{\left|J_{n_{\epsilon}}^{l(s)}\right|}{l(s)} P\left(|t|=s\mid\widetilde{G_{1,\epsilon}}\right)}} - \widehat{\delta_{\omega_{n_{\epsilon}}}}([u])\right| & \leq\sum_{s=2a_{\epsilon}}^{b_{\epsilon}}\left|\frac{\left|J_{n_{\epsilon}}^{l(s)}\right|}{l(s)} -\widehat{\delta_{\omega_{n_{\epsilon}}}}([u])\right|P\left(|t|=s\mid\widetilde{G_{1,\epsilon}}\right) \\
	& \leq\frac{\left|\omega_{n_{\epsilon}}\right|}{l\left(2a_{\epsilon}\right)}\leq\frac{\left|\omega_{n_{\epsilon}}\right|}{a_{\epsilon}}\tendsto{\epsilon}00.
\end{align*}
\end{proof}
Let us decompose the event along the different lines (the time $t$
at which $*_{0}$ appeared).

\begin{align*}
P\left(\omega=u\mid\widetilde{G_{1,\epsilon}}\right) & = \sum_{s=2a_{\epsilon}}^{b_{\epsilon}}P\left(\omega=u\mid\widetilde{G_{1,\epsilon}}\cap|t|=s\right)P\left(|t|=s\mid\widetilde{G_{1,\epsilon}}\right)\\
 & = \sum_{s=2a_{\epsilon}}^{b_{\epsilon}}\sum_{k\in J_{n_{\epsilon}}^{l(s)}}\frac{(1-\boldepsilon)^{k-1}\boldepsilon}{1-(1-\boldepsilon)^{l(s)}}P\left(|t|=s\mid\widetilde{G_{1,\epsilon}}\right).
\end{align*}
Using the last proposition, we can look at the difference with the approximation of $\widehat{\delta_{\omega_{n_{\epsilon}}}}([u])$:
\begin{align*}
\left|P(\omega=u\mid \widetilde{G_{1,\epsilon}}) - \sum_{s=2a_{\epsilon}}^{b_{\epsilon}}\frac{|J_{n_{\epsilon}}^{l(s)}|}{l(s)}P(|t|=s\mid\widetilde{G_{1,\epsilon}})\right| & \leq \sum_{s=2a_{\epsilon}}^{b_{\epsilon}} \sum_{k\in J_{n_{\epsilon}}^{l(s)}} \left|\frac{(1-\boldepsilon)^{k-1}\boldepsilon}{1-(1-\boldepsilon)^{l(s)}}-\frac{1}{l(s)}\right| P(|t|=s\mid\widetilde{G_{1,\epsilon}}) \\
	& \leq\max_{2a_{\epsilon}\leq s\leq b_{\epsilon}} \max_{k} \left|\frac{(1-\boldepsilon)^{k-1}\boldepsilon l(s)}{1-(1-\boldepsilon)^{l(s)}}-1\right| \cdot \underset{\leq1}{\underbrace{\frac{\left|J_{n_{\epsilon}}^{l(s)}\right|}{l(s)}}} \\
	& \leq\max_{\underset{k\leq l(s)}{2a_{\epsilon}\leq s\leq b_{\epsilon}}}\left|\frac{(1-\boldepsilon)^{k-1}\boldepsilon l(s)}{1-(1-\boldepsilon)^{l(s)}}-1\right|.
\end{align*}
Finally, the power series of the exponential function gives
\begin{align*}
\left|(1-\boldepsilon)^{k-1}\boldepsilon l(s)-\left(1-(1-\boldepsilon)^{l(s)}\right)\right| & \leq\left|(1-\boldepsilon)^{k-1}\boldepsilon l(s) + l(s)\ln(1-\boldepsilon)\right| \\ & \qquad\qquad + \left|\frac{l(s)^{2}\ln(1-\boldepsilon)^{2}}{2}\sum_{k\geq2}\frac{l(s)^{k-2}\ln(1-\boldepsilon)^{k-2}}{k!}\right|\\
 & =\boldepsilon l(s)\left|(1-\boldepsilon)^{k-1} + \frac{\ln(1-\boldepsilon)}{\boldepsilon}\right| \\ & \qquad\qquad + \left|\frac{l(s)^{2}\ln(1-\boldepsilon)^{2}}{2}\sum_{k\geq2}\frac{l(s)^{k-2}\ln(1-\boldepsilon)^{k-2}}{k!}\right|
\end{align*}
and we can conclude by the monotony of $x\mapsto\frac{\boldepsilon x}{1-(1-\boldepsilon)^{x}}$ and $x\mapsto\frac{-x}{1-e^{x}}$ to conlude that for all $2a_{\epsilon}\leq s\leq b_{\epsilon}$ and $k\leq l(s)$:
\begin{align*}
\left|\frac{(1-\boldepsilon)^{k-1}\boldepsilon l(s)}{1-(1-\boldepsilon)^{l(s)}}-1\right| & \leq\frac{\boldepsilon l(s)}{1-(1-\boldepsilon)^{l(s)}}\left|(1-\boldepsilon)^{l(s)} + \frac{\ln(1-\boldepsilon)}{\boldepsilon}\right| \\ & \qquad\qquad + \left|\frac{l(s)\ln(1-\boldepsilon)}{1-(1-\boldepsilon)^{l(s)}}\right|\left|\frac{l(s)\ln(1-\boldepsilon)}{2}\right|\left|\sum_{k\geq2}\frac{l(s)^{k-2}\ln(1-\boldepsilon)^{k-2}}{k!}\right|\\
 & \leq\underset{\tendsto{\epsilon}01}{\underbrace{\frac{\boldepsilon b_{\epsilon}}{1-(1-\boldepsilon)^{b_{\epsilon}}}}} \underset{\tendsto{\epsilon}00}{\underbrace{\left|(1-\boldepsilon)^{b_{\epsilon}} + \frac{\ln(1-\boldepsilon)}{\boldepsilon}\right|}} \\ & \qquad\qquad + \underset{\tendsto{\epsilon}01}{\underbrace{\left|\frac{b_{\epsilon}\ln(1-\boldepsilon)}{1-(1-\boldepsilon)^{b_{\epsilon}}}\right|}}\underset{\tendsto{\epsilon}00}{\underbrace{\left|\frac{b_{\epsilon}\ln(1-\boldepsilon)}{2}\right|}}\underset{\leq1}{\underbrace{\left|\sum_{k\geq2}\frac{a_{\epsilon}^{k-2}\ln(1-\boldepsilon)^{k-2}}{k!}\right|}}\\
 & \tendsto{\epsilon}00.
\end{align*}

The bound is uniform in $s$, so we can conclude that 
\[
\left|P(\omega=u\mid \widetilde{G_{1,\epsilon}}) - \sum_{s=2a_{\epsilon}}^{b_{\epsilon}}\frac{|J_{n_{\epsilon}}^{l(s)}|}{l(s)}P(|t|=s\mid\widetilde{G_{1,\epsilon}})\right| \tendsto{\epsilon}{0}0.
\]

\subsubsection{$G_{2,\epsilon}$}
In this section we show that
\[
\left|P\left(\omega=u\cap G_{2,\epsilon}\mid G_{1,\epsilon}\sqcup G_{2,\epsilon}\right)-p_{\epsilon}^{2}\widehat{\delta_{\omega_{n_{\epsilon}}+1}}([u])\right|\tendsto{\epsilon}00.
\]
Recall that $p_{\epsilon}^{2} = P\left(G_{2,\epsilon}\mid G_{1,\epsilon}\sqcup G_{2,\epsilon}\right)$.

We consider the two following cases depending on $\epsilon$: $T_{n_{\epsilon}+1}\geq\frac{b_{\epsilon}}{2}$ and $T_{n_{\epsilon}+1}<\frac{b_{\epsilon}}{2}$. In the first case, it suffices to notice that $G_{2,\epsilon}$ implies that $*_0$ cannot appear at time between $-T_{n_\epsilon+1}$ and $-2a_\epsilon$, so that 
\[
p_{\epsilon}^{2} \leq (1-\boldepsilon)^{\frac{\left(T_{n_{\epsilon}+1} -2a_\epsilon\right)^{2}}{2}} \leq (1-\boldepsilon)^{\frac{\left(\frac{b_{\epsilon}}{2}-2a_\epsilon\right)^{2}}{2}} \tendsto{\epsilon}00
\]
with $\boldepsilon = \frac{\epsilon}{|\B|}$. In the second case, we have $p_{\epsilon}^{2}>0$ so we can condition by $G_{2,\epsilon}$. The computations are analogous of the ones for $G_{1,\epsilon}$, and we can conclude by
\[
\left|P\left(\omega=u\cap G_{2,\epsilon}\mid G_{1,\epsilon}\sqcup G_{2,\epsilon}\right)-p_{\epsilon}^{2}\widehat{\delta_{\omega_{n_{\epsilon}}+1}}([u])\right|\leq\begin{cases}
(1-\boldepsilon)^{\frac{\left(\frac{b_{\epsilon}}{2}-2a_\epsilon\right)^{2}}{2}} & \text{if }T_{n_{\epsilon}+1}\geq\frac{b_{\epsilon}}{2},\\
\left|P\left(\omega=u\mid G_{2,\epsilon}\right)-\widehat{\delta_{\omega_{n_{\epsilon}+1}}}([u])\right| & \text{otherwise}.
\end{cases}
\]

\subsubsection{Proof of Proposition~\ref{prop.convergencePiepsilon}}

Recall that $\pi_\epsilon$ is a given $F_\epsilon$ invariant measure and $\mu_\epsilon$ is defined by $\mu_{\epsilon} = p_{\epsilon}^{1}\widehat{\delta_{\omega_{n_{\epsilon}}}} + p_{\epsilon}^{2}\widehat{\delta_{\omega_{n_{\epsilon}+1}}}$, with 
$p_\epsilon^i = P\left(G_{i,\epsilon}\mid G_{1,\epsilon}\sqcup G_{2,\epsilon}\right)$.

\begin{proof}
Let $u\in\mathcal{B^{*}}$ be a given pattern. We want to show that $\left|\pi_{\epsilon}([u])-\mu_{\epsilon}([u])\right|\tendsto{\epsilon}00$. Let us decompose 
\[
\pi_{\epsilon}([u]) = P(\omega=u) = P(\omega=u\mid G_{1,\epsilon}\sqcup G_{2,\epsilon}) P(G_{1,\epsilon}\sqcup G_{2,\epsilon}) + P(\omega=u\mid\overline{G_{1,\epsilon}\sqcup G_{2,\epsilon}}) P(\overline{G_{1,\epsilon}\sqcup G_{2,\epsilon}}),
\]
which leads to
\begin{align*}
\left|\pi_{\epsilon}([u])-\mu_{\epsilon}([u])\right| & \leq \left|P(\omega=u\cap G_{1,\epsilon}\mid G_{1,\epsilon}\sqcup G_{2,\epsilon})-p_{\epsilon}^{1}\widehat{\delta_{\omega_{n_{\epsilon}}}}([u])\right| \\
	& \qquad\qquad + \left|P(\omega=u\cap G_{2,\epsilon}\mid G_{1,\epsilon}\sqcup G_{2,\epsilon})-p_{\epsilon}^{2}\widehat{\delta_{\omega_{n_{\epsilon}}+1}}([u])\right| + P\left(\overline{G_{1,\epsilon}\sqcup G_{2,\epsilon}}\right) \\
	& =p_{\epsilon}^{1}\left|P\left(\omega=u\mid G_{1,\epsilon}\right)-\widehat{\delta_{\omega_{n_{\epsilon}}}}([u])\right| \\
	& \qquad\qquad + \left|P\left(\omega=u\cap G_{2,\epsilon}\mid G_{1,\epsilon}\sqcup G_{2,\epsilon}\right)-p_{\epsilon}^{2}\widehat{\delta_{\omega_{n_{\epsilon}}+1}}([u])\right| + P\left(\overline{G_{1,\epsilon}\sqcup G_{2,\epsilon}}\right)
\end{align*}
and each of these three terms converges towards 0 when $\epsilon\to0$ by the results of last three sections.
\end{proof}

\subsection{Conclusion} \label{subsec:conclusionPreuve}

As a result of Proposition \ref{prop.convergencePiepsilon}, we can finish the proof of Theorem \ref{thm:realization}. 

For a fixed sequence $\epsilon_{i}\tendsto i{\infty}0$ and a choice of  $\pi_{\epsilon_{i}}\in\M_{\epsilon_{i}}$, we get $d_\M\left(\pi_{\epsilon_i},\mu_{\epsilon_i}\right)\tendsto{i}{+\infty}0$, so that $(\pi_{\epsilon_i})$ and $(\mu_{\epsilon_i})$ have the same accumulation points. By definition, $\mu_{\epsilon} \in \text{Conv}\left(\widehat{\delta_{\omega_{n_{\epsilon}}}},\widehat{\delta_{\omega_{n_{\epsilon+1}}}}\right)$, and so
\[
\text{Acc}\left(\pi_{\epsilon_{i}}\right)\subset\text{Acc}\left(\text{Conv}\left(\widehat{\delta_{\omega_{n_{\epsilon_{i}}}}},\widehat{\delta_{\omega_{n_{\epsilon_{i}}+1}}}\right)\right)\subset \mathcal{K}.
\]

Reciprocally, if $\mu\in \mathcal{K}$,  then there exists $n_{j}\tendsto j{\infty}\infty$ such that  $\mu=\lim_{j\to\infty}\widehat{\delta_{\omega_{n_{j}}}}$. Choosing $\epsilon_{j}\tendsto j{\infty}0$ such that  $n_{\epsilon_{j}}=n_{j}$, then for any choice of $\pi_{\epsilon_{j}}\in\M_{\epsilon_{j}}$, one will have $\pi_{\epsilon_{j}}\weakto j{\infty}\mu$ (using $d_\M\left(\widehat{\delta_{\omega_{n}}},\widehat{\delta_{\omega_{n+1}}}\right)\tendsto n{\infty}0$).

\section{Dependence on the family of noise}\label{section.DependenceNoise}

When the family of noise considered is not computable, the set of limit measures is not necessary $\Pi_2$-computable. For a probability vector $\alpha=(\alpha_a)_{a\in\A}$ (recall that $\alpha_a\geq 0$ for all $a\in\A$ and $\sum_{a\in\A}\alpha_a=1$), consider the random perturbation associated to the cellular automata $F$ of bias $\alpha$ denoted $(F_{\alpha,\epsilon})_{\epsilon>0}$ which is defined by the local rule

$$f_{\alpha,\epsilon}(u,a)=
\begin{cases}
 \epsilon\alpha_a &\textrm{ if }f(u)\ne a\\
1-\epsilon+\epsilon\alpha_a&\textrm{ if }f(u)=a\\
\end{cases}.
$$
In other word, this means that independently, for each cell, one applies $F$ and with probability $\epsilon$ a change is made by choosing a new symbol according to the probability vector $\alpha$. The uniform noise corresponds to the probability vector with $\alpha_a = \frac{1}{|\A|}$.

With this perturbation if $F$ is the identity then the set of $(F_{\alpha,\epsilon},\sigma)$-invariant measures is $\M_{\epsilon}(\alpha)=\{\lambda_{\alpha}\}$ where $\lambda_\alpha$ is the Bernoulli measure with $\lambda_\alpha([a])=\alpha_a$. Thus, if $\alpha$ is not limit-computable (this is possible since the set of limit-computable numbers is countable) the limit set $\Ml(\alpha)$ is not necessarily $\Pi_2$-computable.

Denote by $\mathcal{V}_{\B}=\left\{(\alpha_b)_{b\in\B}:\alpha_b>0,\ \forall b\in\B\textrm{ and }\sum_{b\in\B}\alpha_b=1\right\}$ the set of strictly positive probability vectors indexed by $\B$ and $\mathfrak{K}_{\A}$ the set of connected compact subsets of $\M\left(\A^\Z\right)$. We want to characterize the set of functions which can be obtained as $\alpha\longmapsto\Ml(\alpha)$. Here, the notion of computable function is too strong: let us introduce the weaker notion of $\Pi_2$-computable function with oracle.

\subsection{Definitions and constraints}

We are going to define computation with oracle, the definitions are taken from Section 5.4 of \cite{HS18}.

\begin{defn} \phantom{}
\begin{itemize}
	\item A \define{Turing machine with oracle} in $\mathcal{V}_{\B}$ is a Turing machine that with $\alpha \in \mathcal{V}_{\B}$ fixed before the computation. The machine can query the oracle at any time by writing $k\in\N$ on a special additional tape and entering a special oracle state. At this step the contents of the oracle tape are replaced by an approximation of $\alpha$ with an error at most of $2^{-k}$. Denote by $M_\alpha(x)$ a Turing machine with oracle $\alpha \in [0,1]^k$ and input $x$.
	\item A function $f:\mathcal{V}_{\B}\times\N \to \A^*$ is \define{computable with oracle in $\mathcal{V}_{\B}$} if there exists a Turing machine $M$ with oracle in $\mathcal{V}_{\B}$ such that $f(\alpha,n) = M_\alpha(n)$.
	\item A function $\Psi : \mathcal{V}_{\B} \to \mathfrak{K}_{\A}$ is \define{$\Pi_2$-computable with oracle in $\mathcal{V}_{\B}$} if there exists a Turing Machine with oracle in $\mathcal{V}_{\B}$ $M$ such that for all $(x,r)\in(\mathfrak{P},\Q_+)$,
		\[
			\Psi(\alpha) \cap \overline{B(x,r)} \neq \emptyset \Longleftrightarrow \forall n\in\N, \exists m\in\N, M_\alpha(x,r,m,n) = 1.
		\]
\end{itemize}

\end{defn}

\begin{prop}
Let $F:\A^\Z\longrightarrow\A^\Z$ be a cellular automaton. Then the function $\alpha\longmapsto\Ml(\alpha)$ is $\Pi_3$-computable with oracle in $\mathcal{V}_{\A}$.

Moreover, if $\Ml(\alpha)$ is uniformly approached for any $\alpha$, then the function $\alpha\longmapsto\Ml(\alpha)$ is $\Pi_2$-computable with oracle in $\mathcal{V}_{\A}$.
\end{prop}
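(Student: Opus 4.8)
The plan is to relativize to the oracle $\alpha$ the whole chain of computability results of Sections~\ref{subsec:computableNoise}--\ref{section.ComputContraint}, observing that the bias $\alpha$ enters the construction only through the local rule. First I would check that $(\alpha,\epsilon)\mapsto f_{\alpha,\epsilon}$ is computable with oracle in $\mathcal{V}_\A$: each entry $f_{\alpha,\epsilon}(u,a)$ equals $\epsilon\alpha_a$ or $1-\epsilon+\epsilon\alpha_a$, an affine function of the coordinates of $\alpha$ and of $\epsilon$, so for rational $\epsilon$ a Turing machine querying the oracle for $\alpha$ to precision $2^{-k}$ can output $f_{\alpha,\epsilon}$ to any desired precision, with a computable modulus of continuity in $\epsilon$. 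Since the only hypothesis used in Lemma~\ref{lemma.calculNoise} was the computability of $\epsilon\mapsto f_\epsilon$, replacing plain computability by computability relative to $\alpha$ throughout its proof gives that $(\epsilon,\mu)\mapsto F_{\alpha,\epsilon}\mu$ is computable with oracle $\alpha$.

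Next I would relativize the statement that $\M_{[\epsilon',\epsilon'']}$ is $\Pi_1$-computable uniformly in its bounds. The function $\Psi_\alpha:(\epsilon,\mu)\mapsto d_\M(F_{\alpha,\epsilon}\mu,\mu)$ is computable with oracle $\alpha$ by the previous step, so $\Psi_\alpha^{-1}(0)$ is a $\Pi_1$-computable subset of $[0,1]\times\M(\A^\Z)$ relative to $\alpha$, and the reduction $\M_{[\epsilon',\epsilon'']}(\alpha)\cap\overline{B}(x,r')\ne\emptyset \Leftrightarrow \Psi_\alpha^{-1}(0)\cap([\epsilon',\epsilon'']\times\overline{B}(x,r'))\ne\emptyset$ yields an oracle machine $\varphi_\alpha$ with $\M_{[\epsilon',\epsilon'']}(\alpha)\cap\overline{B}(x,r')\ne\emptyset \Leftrightarrow \forall n,\ \varphi_\alpha(\epsilon',\epsilon'',x,r',n)=1$. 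I emphasize that the biased family is still a continuous perturbation of $F$, since $f_{\alpha,\epsilon}(u,f(u))=1-\epsilon+\epsilon\alpha_{f(u)}\ge 1-\epsilon$ and $\epsilon\mapsto f_{\alpha,\epsilon}$ is continuous; hence the Continuity Lemma~\ref{Lemma-ContinuityLemma}, the connectedness of $\Ml(\alpha)$, and Lemma~\ref{lemma-RestrictionDdelta} apply verbatim, being purely measure-theoretic and oracle-free.

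With these ingredients the two claims follow by repeating the quantifier computations of Propositions~\ref{prop:computablePerturbation} and~\ref{prop:UniformApproachPi2} with $\varphi$ replaced by $\varphi_\alpha$. For the first part, the chain
\[
\overline{B(x,r)}\cap\Ml(\alpha)\ne\emptyset \Longleftrightarrow \forall r'\in\Q^\ast_+,\forall\epsilon''\in\Q^\ast_+,\exists\epsilon'\in\Q^\ast_+\cap[0,\epsilon''],\ \forall n\in\N,\ \varphi_\alpha(\epsilon',\epsilon'',x,r+r',n)=1
\]
has shape $\forall\exists\forall$ after merging the two leading universal quantifiers, giving $\Pi_3$-computability with oracle. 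For the second part, under the uniform-approach hypothesis Lemma~\ref{lemma-RestrictionDdelta} gives, exactly as in Proposition~\ref{prop:UniformApproachPi2},
\[
\overline{B(x,r)}\cap\Ml(\alpha)\ne\emptyset \Longleftrightarrow \forall\delta,\forall r',\forall\epsilon_0,\ \exists\epsilon\in[0,\epsilon_0]\cap\Q^\ast_+,\ \exists y\in\mathfrak{P},\ \begin{cases}\M_\epsilon(\alpha)\subset B(y,\delta)\\ \overline{B(y,\delta)}\subset B(x,r+r'+2\delta)\end{cases}
\]
where the quantifiers range over $\Q^\ast_+$. The inclusion $\M_\epsilon(\alpha)\subset B(y,\delta)$ is semi-decidable relative to $\alpha$: a set that is $\Pi_1$-computable with oracle is detected inside a rational ball by a finite subcover via recursive compactness of $\M(\A^\Z)$, and the complementary rational balls are enumerable with the oracle; the inclusion $\overline{B(y,\delta)}\subset B(x,r+r'+2\delta)$ is decidable on rationals. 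Absorbing this $\Sigma_1$-with-oracle block into the existential layer collapses the quantifiers to $\forall\exists$, i.e. $\Pi_2$-computability with oracle.

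The only genuinely new work is Step~1 — certifying that the oracle delivers $f_{\alpha,\epsilon}$, and hence $F_{\alpha,\epsilon}\mu$, to arbitrary precision with a computable modulus — after which every later argument relativizes mechanically, since $\alpha$ is fixed before the computation and intervenes nowhere except through the local rule. I therefore expect no substantial obstacle beyond the bookkeeping that the oracle is consulted uniformly and that the topological lemmas stay oracle-free; the care required is in checking that each reduction used in the original proofs is \emph{relativizing}, which it is because all of them are effective transformations applied to the single oracle-computable object $f_{\alpha,\epsilon}$.
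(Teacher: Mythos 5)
Your proposal is correct and follows exactly the same route as the paper: the paper's proof consists precisely of observing that $(\alpha,\epsilon)\mapsto f_{\alpha,\epsilon}$ is computable with oracle in $\mathcal{V}_{\A}$ and then rerunning the quantifier arguments of Propositions~\ref{prop:computablePerturbation} and~\ref{prop:UniformApproachPi2} relative to that oracle. Your write-up simply makes explicit the relativization steps (Lemma~\ref{lemma.calculNoise}, the uniform $\Pi_1$-computability of $\M_{[\epsilon',\epsilon'']}$, and the oracle-free topological lemmas) that the paper leaves implicit.
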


\begin{proof}
The random perturbation $(\alpha, \epsilon) \mapsto f_{\alpha,\epsilon}$ is computable with oracle in $\mathcal{V}_\A$: we can then use the same computations as in the proofs of Proposition \ref{prop:computablePerturbation} and Proposition \ref{prop:UniformApproachPi2} to show the result.
\end{proof}

\subsection{Realization}

If we apply a bias $\alpha \in \mathcal{V}_\B$ in the construction of Theorem \ref{thm:realization}, its behavior is not changed (as long as $\alpha_* \neq 0$), as the probability of error is $\epsilon$ and the probability of having a $*$ symbol are proportional (and we suppose no errors in the computations). We would have the same $\Ml$ for any $\alpha$. In order to take it into account, we modify this construction by adding another layer where the CA acts as the identity function and out all the bias in the noise on this layer.

\begin{prop}[from \cite{HS18}]
If $\Psi: [0,1] \to \mathfrak{K}_{\A}$ is a $\Pi_2$-computable function with oracle in $[0,1]$ such that every element in $\Psi([0,1])$ is connected, then there exist a function $f:[0,1]\times\N \to \A^*$ computable with oracle in $[0,1]$ such that 
\[
	\Psi(\alpha) = \Acc{n\to\infty}{\widehat{\delta_{f(\alpha,n)}}}
\]
and $d\left(\delta_{f(\alpha,n)},\delta_{f(\alpha,n+1)} \right) \tendsto{n}{\infty}0$.
\end{prop}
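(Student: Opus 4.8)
The statement is the relativization to an oracle of the characterization, already invoked at the start of Section~\ref{Section.TheoremeRealisation}, of connected $\Pi_2$-computable compact sets of measures as accumulation sets of computable sequences of periodic-orbit measures (Proposition~6 of \cite{HS18}, Proposition~5 of \cite{GST23}). The plan is therefore to run exactly that construction, but with every Turing machine involved granted access to the oracle $\alpha\in[0,1]$, so that the objects produced depend on $\alpha$ and the whole procedure is uniform in $\alpha$; correctness for each fixed $\alpha$ then follows from the unrelativized argument with $\alpha$ held fixed.

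First I would turn the $\Pi_2$-with-oracle description of $\Psi$ into approximations. By hypothesis there is an oracle machine $M$ with $\Psi(\alpha)\cap\overline{B(x,r)}\neq\emptyset\Longleftrightarrow\forall n\,\exists m\,M_\alpha(x,r,m,n)=1$. As in the recursively compact setting of $\M(\A^\Z)$, this double-quantifier predicate lets one compute, uniformly in $\alpha$ and using only oracle queries, a sequence of finite subsets $K_s(\alpha)\subseteq\mathfrak{P}$ of periodic-orbit measures with $d_H\!\left(K_s(\alpha),\Psi(\alpha)\right)\tendsto s\infty 0$ for the Hausdorff distance induced by $d_\M$. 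The essential feature, inherited from the $\Pi_2$ (rather than $\Pi_1\cap\Sigma_1$) nature of the hypothesis, is that this convergence carries \emph{no computable modulus}: one can compute the sets $K_s(\alpha)$, but cannot certify how close a given one is to $\Psi(\alpha)$.

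Next I would assemble the single sequence $f(\alpha,\cdot)$ by concatenating finite walks through successive approximations. For each $s$, using that $\Psi(\alpha)$ is connected (hence well-chained) and that $K_s(\alpha)$ is $\delta_s$-dense in a neighbourhood of it, I list the elements of $K_s(\alpha)$ along a $\delta_s$-chain, which exists because fine approximations of a connected compact set are chain-connected; I also bridge the last point of block $s$ to the first point of block $s+1$ by such a short chain. Writing each visited periodic measure as $\widehat{\delta_{f(\alpha,n)}}$ for its generating word yields $d_\M\!\left(\widehat{\delta_{f(\alpha,n)}},\widehat{\delta_{f(\alpha,n+1)}}\right)\tendsto n\infty 0$ since the step bound $\delta_s\to0$. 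That the accumulation set equals $\Psi(\alpha)$ follows from the standard blocking argument: the tail of the sequence is drawn from blocks of arbitrarily large index $s$, hence from approximations $K_s(\alpha)$ arbitrarily close to $\Psi(\alpha)$, so every accumulation point lies in $\Psi(\alpha)$ by compactness; conversely every point of $\Psi(\alpha)$ is a limit of points of the $K_s(\alpha)$, all of which are eventually listed. Since each step reads only finitely much of the oracle, $f$ is computable with oracle in $[0,1]$.

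The main obstacle is precisely the \emph{absence of a computable rate} of Hausdorff convergence: because the hypothesis is only $\Pi_2$, one can never halt and assert that a block is a good approximation, so one cannot control the output by bounding errors at each finite stage. This is what forces the purely asymptotic interleaving above, where correctness is read off only from the tail of the sequence rather than from any finite certificate; the connectedness hypothesis is what reconciles the small-step requirement $d_\M(\widehat{\delta_{f(\alpha,n)}},\widehat{\delta_{f(\alpha,n+1)}})\to0$ with covering the entire set, via chain-connectedness of the approximants. Both points are already handled in \cite{HS18}; here the only new ingredient is to check that every computation relativizes verbatim once $\alpha$ is supplied through $M$, which it does because the construction uses $\alpha$ solely through that oracle machine.
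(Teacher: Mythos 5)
The paper never proves this proposition: it is imported wholesale from \cite{HS18} (Section 5.4 of that paper, which is also where the oracle formalism comes from), so the only honest comparison is with the construction there. Your outer layer --- ``run the unrelativized construction with every machine given oracle access to $\alpha$'' --- is indeed the routine part and is fine. The problem is that your sketch of the core, unrelativized construction is not the one in \cite{HS18}, and it contains a step that is not merely unjustified but \emph{false}. You claim that the $\Pi_2$ description lets one compute finite sets $K_s(\alpha)\subseteq\mathfrak{P}$ with $d_H\bigl(K_s(\alpha),\Psi(\alpha)\bigr)\to 0$ (Hausdorff convergence, just without a computable modulus). This is impossible in general, even for connected sets and with no oracle around. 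Take $A=\{e:\varphi_e\text{ is total}\}$, which is $\Pi_2$-complete and hence not limit-computable, and consider the ``comb''
\[
\mathcal{C} \;=\; \bigl([0,1]\times\{0\}\bigr)\;\cup\;\bigcup_{e\in A}\;\{2^{-e}\}\times[0,2^{-e}],
\]
viewed inside a computably (bi-Lipschitz) embedded $2$-simplex of $\M(\A^\Z)$, e.g.\ the convex hull of $\widehat{\delta_{0}},\widehat{\delta_{1}},\widehat{\delta_{01}}$. The set $\mathcal{C}$ is compact and connected, and it is $\Pi_2$-computable: a closed ball meets $\mathcal{C}$ iff for every rational $\delta>0$ it comes $\delta$-close to the base or to one of the teeth with $2^{-e}\geq\delta$ and $e\in A$; the bound on $e$ is computable from $\delta$, each clause ``$e\in A$'' is $\Pi_2$, and the bounded disjunction keeps the whole predicate of the form $\forall\exists$. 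But if computable finite sets $K_s$ satisfied $d_H(K_s,\mathcal{C})\to0$, then for the tip $p_e$ of tooth $e$ (a computable point, uniformly in $e$) one would have $d(p_e,\mathcal{C})=0$ when $e\in A$ and $d(p_e,\mathcal{C})\geq 2^{-e-1}$ when $e\notin A$, so the bit $[e\in A]$ would equal $\lim_s\,[\,d(p_e,K_s)<2^{-e-2}\,]$ (computed with precision $2^{-e-4}$), making $A$ limit-computable --- a contradiction.

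This kills the whole architecture, not just one lemma: your tours need the \emph{upper} half of Hausdorff convergence (every point of $K_s(\alpha)$ close to $\Psi(\alpha)$, otherwise no $\delta_s$-chain through all of $K_s(\alpha)$ exists) and your coverage argument needs the \emph{lower} half, and the counterexample shows the two cannot be had simultaneously by any computable sequence of finite sets. It also explains the internal tension already visible in your text: you correctly insist there is no computable modulus, yet your algorithm must, at stage $s$, exhibit a $\delta_s$-chain whose existence is exactly the kind of fact a modulus would certify. The actual proof in \cite{HS18} does not factor through finite snapshots of $\Psi(\alpha)$ at all. The sequence is built by interleaving the walk with the $\forall n\exists m$ verification of individual balls: the walk keeps returning (by small steps, which is where connectedness enters) to a ball as long as its verification keeps progressing, and silently abandons it once the verification stalls; a region is then visited infinitely often --- i.e.\ contributes to $\Acc{n\to\infty}{\widehat{\delta_{f(\alpha,n)}}}$ --- exactly when it meets $\Psi(\alpha)$, while at no finite stage is the set of currently ``believed'' balls close to $\Psi(\alpha)$ in any Hausdorff sense (on the comb, teeth with $e\notin A$ are walked up finitely many times, and this is undetectable at every finite stage). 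So your concluding claim that ``both points are already handled in \cite{HS18}'' is leaning on the reference for precisely the part your sketch misdescribes; as a standalone argument the proposal does not prove the proposition.
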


\begin{thm}\label{thm:realisationFunction}
Let $\A$ be a finite alphabet and $\Psi: [0,1] \to \mathfrak{K}_{\A}$ a $\Pi_2$-computable function with oracle in $[0,1]$ such that every element in $\Psi([0,1])$ is connected. 

Then there exists an alphabet $\B \supset \A$ and a cellular automaton $G$ on $(\B\times\{0,1\})^\Z$ such that 
\[
\Ml(\beta^\alpha)= \Psi(\alpha)\times\{\lambda_\alpha\},
\]
where $\beta^\alpha_{(b,i)} = \begin{cases}
\frac{\alpha}{|\B|} & \text{if }i=1 \\
\frac{1-\alpha}{|\B|} & \text{if }i=0
\end{cases}$ for all $(b,i)\in\B\times\{0,1\}$, and $\lambda_\alpha$ is the Bernoulli measure of parameter $\alpha$ on $\{0,1\}^\Z$. Moreover $\Ml(\beta^\alpha)$ is uniformly approached.

\end{thm}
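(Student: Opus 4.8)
The plan is to run the construction of Theorem~\ref{thm:realization} on the $\B$-coordinate while using the second coordinate $\{0,1\}$ as a frozen source of randomness encoding the oracle $\alpha$. The cellular automaton $G$ is defined as a skew product: on the $\{0,1\}$ layer $G$ acts as the identity, and on the $\B$ layer it acts as the (modified) construction, which is allowed to read—but never to write on—the $\{0,1\}$ layer. The noise $\beta^\alpha$ factors as the product of the uniform noise on $\B$ and the Bernoulli$(\alpha)$ resampling on $\{0,1\}$: indeed $\beta^\alpha_{(b,1)}=\frac{\alpha}{|\B|}=\frac{1}{|\B|}\cdot\alpha$ and $\beta^\alpha_{(b,0)}=\frac{1-\alpha}{|\B|}=\frac{1}{|\B|}\cdot(1-\alpha)$, so the two layers are perturbed independently. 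Consequently the $\{0,1\}$ layer evolves autonomously under identity dynamics with independent Bernoulli$(\alpha)$ resampling; as recalled at the beginning of this section its unique invariant measure is the product measure $\lambda_\alpha$, which contributes the factor $\{\lambda_\alpha\}$ and, being a singleton, does not affect uniform approachability. Moreover the $\B$-marginal of $\beta^\alpha$ is exactly the uniform noise, so the symbol $*$ still appears with probability $\frac{\epsilon}{|\B|}$ per cell and the entire timing analysis of Subsection~\ref{section:choseTn} ($a_\epsilon,b_\epsilon,n_\epsilon,T_n$) is unchanged.

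Next I would replace the fixed computable sequence $(\omega_n)$ by words computed from the oracle. Applying the preceding proposition from~\cite{HS18} to $\Psi$ yields a function $f:[0,1]\times\N\to\A^*$, computable with oracle in $[0,1]$, with $\Psi(\alpha)=\Acc{n\to\infty}{\widehat{\delta_{f(\alpha,n)}}}$ and $d_\M(\widehat{\delta_{f(\alpha,n)}},\widehat{\delta_{f(\alpha,n+1)}})\tendsto n\infty 0$. In the computation zone the construction now simulates the oracle machine $M$ computing $f(\cdot\,,m)$ for $m=1,2,\dots$: whenever $M$ queries its oracle at precision $2^{-k}$, the construction answers with the empirical frequency $\widehat\alpha$ of the symbol $1$ over the Bernoulli$(\alpha)$ values it has read on the $\{0,1\}$ layer inside the computation zone (these values are never overwritten, hence are independent samples at distinct positions). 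Reading the $\Theta(\log t)$ cells of the computation zone and counting in a binary register of size $O(\log\log t)$ costs negligible time and space, and produces an estimate $\widehat\alpha$ valid up to precision $2^{-k_n}$ with $k_n\tendsto n\infty\infty$. We set $\omega_n:=f(\alpha,m_n)$, where $m_n$ is the largest $m$ for which the simulation of $M$ on input $m$ completes within the available budget at step $n$—time $T_n-T_{n-1}$, space $\log(T_{n-1})$, and oracle precision $2^{-k_n}$—aborting otherwise; this $m_n$ is determined by dovetailing.

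The main obstacle is matching the resources of the construction to the (unbounded, $\alpha$-dependent) resources required to evaluate the oracle machine. The key point is that all three budgets diverge with $n$: the time $T_n-T_{n-1}$ and the space $\log(T_{n-1})=2^{n-1}$ tend to infinity, and so does the attainable oracle precision $k_n$, since the number of independent Bernoulli$(\alpha)$ samples read grows and a Hoeffding bound gives $\lvert\widehat\alpha-\alpha\rvert\le 2^{-k_n}$ with probability tending to $1$. Hence, for each fixed $m$, the finite running time, space and maximal query precision of $M_\alpha(m)$ are eventually met, so the simulation succeeds for all large $n$; therefore $m_n\tendsto n\infty\infty$ and $\Acc{n\to\infty}{\widehat{\delta_{\omega_n}}}=\Acc{m\to\infty}{\widehat{\delta_{f(\alpha,m)}}}=\Psi(\alpha)$. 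On the event $\mathcal R_n=\{\lvert\widehat\alpha-\alpha\rvert\le 2^{-k_n}\}$ the answers supplied are genuine $2^{-k}$-approximations of $\alpha$, so the construction computes exactly the word $f(\alpha,m_n)$ that the true oracle machine would; and $P(\mathcal R_n)\tendsto n\infty 1$.

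Finally I would rerun the measure estimate of Proposition~\ref{prop.convergencePiepsilon} with $\mathcal R$ added to the list of favourable events $A_\epsilon,\dots,F_\epsilon$. Since $P(\mathcal R)\to1$, the same computation shows that any family $\pi_\epsilon\in\M_\epsilon(\beta^\alpha)$ satisfies $d_\M\bigl(\pi_\epsilon,\mu_\epsilon\otimes\lambda_\alpha\bigr)\tendsto\epsilon0 0$, where $\mu_\epsilon$ is a convex combination of $\widehat{\delta_{\omega_{n_\epsilon}}}$ and $\widehat{\delta_{\omega_{n_\epsilon+1}}}$. As in Subsection~\ref{subsec:conclusionPreuve}, using $d_\M(\widehat{\delta_{\omega_n}},\widehat{\delta_{\omega_{n+1}}})\to0$ the accumulation set of $(\pi_\epsilon)$ on the $\B$-coordinate equals $\Acc{n\to\infty}{\widehat{\delta_{\omega_n}}}=\Psi(\alpha)$ independently of the chosen family, while the $\{0,1\}$-coordinate is constantly $\lambda_\alpha$. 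This gives $\Ml(\beta^\alpha)=\Psi(\alpha)\times\{\lambda_\alpha\}$, uniformly approached.
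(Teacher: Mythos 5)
Your overall strategy coincides with the paper's proof: a skew product acting as the identity on the $\{0,1\}$ layer (whose unique stationary marginal $\lambda_\alpha$ gives the factor $\{\lambda_\alpha\}$), the factorization of $\beta^\alpha$ into the uniform noise on $\B$ times Bernoulli resampling on $\{0,1\}$, oracle queries answered by the empirical frequency of $1$'s read inside the computation zone, a concentration bound to guarantee the precision (the paper uses Bienaym\'e--Chebychev where you use Hoeffding, which is immaterial), and a rerun of Proposition~\ref{prop.convergencePiepsilon} after enlarging the list of favourable events (the paper folds your event $\mathcal{R}$ into $C_\epsilon$).

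There is, however, one step that fails as written: the definition $\omega_n:=f(\alpha,m_n)$ with $m_n$ \emph{the largest} $m$ whose simulation completes within the budget at stage $n$. Nothing prevents $m_n$ from skipping indices (say $m_n=4$ and $m_{n+1}=7$, if $M_\alpha(5)$ and $M_\alpha(6)$ happen to be costlier than $M_\alpha(7)$), and skipping breaks the argument in both directions. First, your asserted identity $\Acc{n\to\infty}{\widehat{\delta_{\omega_n}}}=\Acc{m\to\infty}{\widehat{\delta_{f(\alpha,m)}}}$ is a non sequitur: the accumulation set of a subsequence can be strictly smaller, and since the sequence produced by the proposition from \cite{HS18} must sweep back and forth across $\Psi(\alpha)$ (it accumulates on every point of a connected set), a biased selection of indices can lose part of $\Psi(\alpha)$; this destroys the inclusion $\Psi(\alpha)\subset\Ml(\beta^\alpha)$. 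Second, the downstream argument requires $d_\M\bigl(\widehat{\delta_{\omega_n}},\widehat{\delta_{\omega_{n+1}}}\bigr)\tendsto{n}{\infty}0$: this is exactly what makes $\text{Acc}\bigl(\text{Conv}\bigl(\widehat{\delta_{\omega_n}},\widehat{\delta_{\omega_{n+1}}}\bigr)\bigr)\subset\Psi(\alpha)$ in Subsection~\ref{subsec:conclusionPreuve}, since invariant measures are only close to convex combinations of two \emph{consecutive} displayed measures and $\Psi(\alpha)$ is connected but not convex. With skips, consecutive displayed words can remain far apart along a subsequence, and segments joining far-apart points of $\Psi(\alpha)$ produce accumulation points outside $\Psi(\alpha)$, destroying the inclusion $\Ml(\beta^\alpha)\subset\Psi(\alpha)$ as well. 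The repair is what the paper (and already the proof of Theorem~\ref{thm:realization}) does: advance the target index by exactly one, and whenever the time/space/oracle-precision budget at stage $n$ is insufficient, redisplay the previously computed word $f(\alpha,n-1)$ and retry the same index at the next stage. The displayed sequence is then a stuttered copy of $(f(\alpha,m))_m$: every index eventually appears, consecutive measures are either equal or consecutive in the original sequence, so both the accumulation set and the consecutive-distance property are preserved.
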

\begin{proof}
We consider a modified version of the cellular automaton of the realization theorem coupled with the identity on $\{0,1\}^\Z$. The CA is modified as follows: in the computation zone, a Turing machine ``reads" the proportion of $1$ on the second layer. The machine computing the approximating sequence takes it as a parameter to compute $f(\alpha,n)$ such that $\Psi(\alpha) = \Acc{n\to\infty}{\widehat{\delta_{f(\alpha,n)}}}$.

As in the proof for the main theorem, we can suppose $f(\alpha,n)$ to be computable in time $T_n - T_{n-1}$ and in space $\log(T_{n-1})$, but also that all the oracle calls to $\alpha$ only asks for an approximation up to an error of $2^{-n/4}$. If not, we can just repeat $f(\alpha,n-1)$ until the machine has enough time and space: this does not change the set of accumulation points $\Psi(\alpha)$.

On any invariant measure $\pi_\epsilon\in \M_\epsilon$, the projection on the second coordinate is an invariant measure of the identity perturbed by a noise of size $\epsilon$ and bias $\alpha\in[0,1]$: there is only one of them, that is the uniform Bernoulli measure of parameter $(1-\alpha,\alpha)$. Thus, the reading of the proportion of $1$ gives an approximation of $\alpha$. 
At time $T_n = 2^{2^n}$, the size of the computation zone is $m=\log(T_n) = 2^n$. Using Bienaymé-Chebychev inequality with $(X_i)$ a sequence of i.i.d random variable with distribution $\mathcal{B}(\alpha)$, one obtains $$P\left( \left|\alpha-\frac{1}{m} \sum_{i=0}^{m-1} X_i 	\right| > \frac{1}{\sqrt[4]{m}} \right) \leq \frac{\alpha(1-\alpha)}{\sqrt{m}}.$$

In the computations of the realization theorem, we can modify event $C_\epsilon$ defined in Section~\ref{subsec:calculs} to also include the event that the proportion of $1$ in the second layer of the computation zone between $T_{n_\epsilon-1}$ and $T_{n_\epsilon}$ is a $2^{-\frac{n_\epsilon}{4}}$ approximation of $\alpha$. Because we condition on the absence of errors in the computation zone, this only depends on the values on the second layer of the cells at the boundary of the computation zone. They are i.i.d. with distribution $\mathcal{B}(\alpha)$, and thus the previous inequality proves that the probability of this event tends to $1$ as $\epsilon$ tends to $0$. The rest of the proof is analogous.
\end{proof}

With this result, it is possible to construct a cellular automaton which is strongly unstable with respect to any family of perturbation with bias, more precisely we have the following corollary. 

\begin{cor}\label{Cor.Strongly Unstable}
Given a finite alphabet $\mathcal{A}$, there exists a cellular automaton on $\B\times\{0,1\}$ where $\B\supset\A$ such that for any bias $\alpha$ on the second coordinate, for any connected compact set $\mathcal{K}\subset\M(\A^\Z)$, for any $\delta>0$, there exists a bias $\alpha'$ such that $|\alpha-\alpha'|\leq\delta$ (so $||f_{\alpha,\epsilon}-f_{\alpha',\epsilon}||_{\infty}\leq\delta$ for all $\epsilon>0$) and the uniformly approached limit set  is $\mathcal{K}\times\{\lambda_{\alpha'}\}$.
\end{cor}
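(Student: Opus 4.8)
The plan is to reduce the whole statement to a single application of Theorem~\ref{thm:realisationFunction}. It suffices to build one \emph{universal} function $\Psi:[0,1]\to\mathfrak{K}_{\A}$, $\Pi_2$-computable with oracle in $[0,1]$ and with every value connected, such that for every connected compact $\mathcal{K}\subset\M(\A^\Z)$ the fiber $\Psi^{-1}(\mathcal{K})$ is \emph{dense} in $[0,1]$. Indeed, Theorem~\ref{thm:realisationFunction} applied to such a $\Psi$ yields, once and for all, an alphabet $\B\supset\A$ and a cellular automaton $G$ on $(\B\times\{0,1\})^\Z$ with $\Ml(\beta^\alpha)=\Psi(\alpha)\times\{\lambda_\alpha\}$, uniformly approached. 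Then, given $\alpha\in[0,1]$, a connected compact $\mathcal{K}$ and $\delta>0$, density of $\Psi^{-1}(\mathcal{K})$ furnishes $\alpha'$ with $|\alpha-\alpha'|\le\delta$ and $\Psi(\alpha')=\mathcal{K}$, whence $\Ml(\beta^{\alpha'})=\mathcal{K}\times\{\lambda_{\alpha'}\}$; the bound $\norm{f_{\alpha,\epsilon}-f_{\alpha',\epsilon}}{\infty}\le\delta$ is immediate from the explicit formula for $\beta^{\alpha}$.

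To build $\Psi$ I would use a decoder $f:[0,1]\times\N\to\A^\ast$ computable with oracle, equipped with a safety net forcing slow variation. First I would fix a computable enumeration $e:\N\to\A^\ast$ of words and a partition of the bit positions into consecutive finite blocks $I_1,I_2,\dots$. Reading from the oracle $\alpha'$ the bits in $I_n$ produces a candidate word $c_n$, and I set $\omega_1=c_1$ and
\[
\omega_n=\begin{cases}c_n & \text{if } d_\M(\widehat{\delta_{\omega_{n-1}}},\widehat{\delta_{c_n}})\le \tfrac1n,\\ \omega_{n-1} & \text{otherwise,}\end{cases}\qquad f(\alpha',n)\coloneqq\omega_n.
\]
Since the distance between periodic measures is computable, $f$ is computable with oracle, and by construction $d_\M(\widehat{\delta_{f(\alpha',n)}},\widehat{\delta_{f(\alpha',n+1)}})\le \tfrac1n\to0$ for \emph{every} $\alpha'$. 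Hence $\Psi(\alpha')\coloneqq\Acc{n\to\infty}{\widehat{\delta_{f(\alpha',n)}}}$ is a nonempty compact \emph{connected} set, and the now-standard converse of the characterization recalled just before Theorem~\ref{thm:realisationFunction} (see \cite{HS18}) shows $\Psi$ is $\Pi_2$-computable with oracle, since $\overline{B(x,r)}\cap\Psi(\alpha')\ne\emptyset \iff \forall(\rho,N)\in\Q_+^\ast\times\N,\ \exists n\ge N,\ d_\M(\widehat{\delta_{f(\alpha',n)}},x)<r+\rho$.

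The hard part will be proving that $\Psi^{-1}(\mathcal{K})$ is dense for \emph{every} connected compact $\mathcal{K}$, and this rests on two ingredients. The first is purely topological: every connected compact $\mathcal{K}\subset\M(\A^\Z)$ is the accumulation set of some sequence $(v_m)$ in $\mathfrak{P}$ with $d_\M(\widehat{\delta_{v_m}},\widehat{\delta_{v_{m+1}}})\to0$. This is exactly the non-effective content of the characterizations of \cite{HS18,GST23}: for each $k$ take a finite $2^{-k}$-net of $\mathcal{K}$ by periodic measures, and use connectedness of $\mathcal{K}$ to order and link these nets into one walk with steps $O(2^{-k})$, the links being realized through periodic measures $\widehat{\delta_{v^{a}v'^{b}}}$ (as $a,b\to\infty$ with slowly varying ratio) which join any two periodic measures by arbitrarily small steps. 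The second ingredient is the encoding: given any finite bit-prefix $p$, taken without loss of generality to end at a block boundary $I_j$ (so that $p$ already determines $\omega_j$), I would complete $\alpha'=0.p\,s$ by writing on the free blocks $I_{j+1},I_{j+2},\dots$ first the word $\omega_j$ itself (accepted at distance $0$), then a sufficiently slow morph from $\widehat{\delta_{\omega_j}}$ to $v_1$, and finally the walk $(v_m)$ with each $v_m$ repeated over enough stages that every transition has $d_\M$-length $\le 1/n$ at the stage $n$ where it occurs. All these transitions are then accepted, so the tail of $(\omega_n)$ is the walk; as the accumulation set depends only on the tail, $\Psi(\alpha')=\mathcal{K}$ while $\alpha'$ keeps the prefix $p$. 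Since $p$ is arbitrary, $\Psi^{-1}(\mathcal{K})$ is dense.

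The remaining points are routine. One must read information from a real oracle through approximations rather than exact bits; I would handle this exactly as in \cite{HS18}, choosing the constructed $\alpha'$ irrational so that no dyadic ambiguity arises and making the block-reading approximation-robust. With $\Psi$ thus established as a universal $\Pi_2$-computable (with oracle) function with connected values and dense fibers, the three displayed reductions of the first paragraph complete the proof of the corollary.
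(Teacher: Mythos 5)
Your proposal is correct and follows essentially the same route as the paper: the paper also reduces the corollary to a single application of Theorem~\ref{thm:realisationFunction}, building one universal oracle-$\Pi_2$-computable function $\Psi$ whose value depends only on the tail of the expansion of $\alpha$ (here written in base $|\A|+1$ with a separator symbol $\$$ delimiting the words $w_n^\alpha$), so that every fiber $\Psi^{-1}(\mathcal{K})$ is dense and the same three-line reduction applies. The only real difference is the mechanism forcing $\Psi(\alpha)$ to be connected for adversarial $\alpha$: you impose a rejection rule in the decoder so that consecutive decoded measures are at distance at most $1/n$, whereas the paper defines $\Psi(\alpha)=\bigcap_{N\in\N}\overline{\bigcup_{n\geq N}\text{Conv}\left(\widehat{\delta_{w_n^\alpha}},\widehat{\delta_{w_{n+1}^\alpha}}\right)}$, which is automatically connected as a decreasing intersection of compact connected sets; both mechanisms are sound.
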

\begin{proof}
 Consider that $\alpha$ is written in bases $|\A|+1$ on the alphabet $\A\cup\{\$\}$ where $\$$ is the bit with higher weight. Given $\alpha$, we associate the sequence of word $(w^\alpha_i)_{i\in\N}$ such that separated by $\$$ one obtains the decimals of $\alpha$. Of course the sequence is finite if there is only finite many $\$$ in the decimals of $\alpha$. In this case the sequence is completed by the last term encountered. Consider the function
 $$\Psi:\alpha\longmapsto \bigcap_{N\in\N}\overline{\bigcup_{n\geq N}\text{Conv}\left(\widehat{\delta_{w_n^\alpha}},\widehat{\delta_{w_{n+1}^\alpha}}\right)}$$
 where $\text{Conv}\left(\widehat{\delta_{u}},\widehat{\delta_{v}}\right)=\left\{(1-t)\widehat{\delta_{u}}+t\widehat{\delta_{v}}:t\in[0,1]\right\}$. We remark that as only the asymptotic bits of the input of $\Psi$ allow to construct the output, we deduce that for all $\alpha$ and $\mathcal{K}\subset\M(\A^\Z)$, it is possible to find $\alpha'$ as close as desired such that $\Psi(\alpha')=\mathcal{K}$.
 Moreover, the function $\Psi$ is a $\Pi_2$-computable function with oracle in $[0,1]$ such that every element in $\Psi([0,1])$ is connected. The Corollary follows from Theorem~\ref{thm:realisationFunction}.
\end{proof}

\end{document}